\documentclass[12pt]{amsart}
\usepackage{geometry}
\usepackage[colorlinks,citecolor = red, linkcolor=blue,hyperindex]{hyperref}
\usepackage{euscript,verbatim, mathrsfs}
\usepackage[psamsfonts]{amssymb}
\usepackage{bbm}
\usepackage{graphicx}
\usepackage{float}
\usepackage{float, tikz}
\usepackage{extpfeil}
\usepackage[all, cmtip]{xy}
\usepackage{upref, xcolor, dsfont}
\usepackage{amsfonts,amsmath,amstext,amsbsy, amsopn,amsthm}
\usepackage{enumerate}
\usepackage{url}
\usepackage{mathtools}
\usepackage{bookmark}
\usepackage{euscript}
\usepackage{mathptmx}
\usepackage{helvet}
\usepackage{courier}
\usepackage{type1cm}
\usepackage{multicol}
\usepackage[bottom]{footmisc}

\newtheorem{theorem}{Theorem}[section]
\newtheorem*{theorem*}{Theorem B}
\newtheorem{lemma}[theorem]{Lemma}

\newtheorem{proposition}[theorem]{Proposition}
\newtheorem{corollary}[theorem]{Corollary}

\newtheorem*{definition*}{Definition}
\newtheorem*{remark*}{Remark}

\newtheorem*{observation*}{Observation}

\newtheorem*{assumption*}{Assumption}
\newtheorem*{question*}{Question}
\newtheorem*{problem*}{Problem}

\newtheorem{claim}[theorem]{Claim}
\newtheorem{remark}{Remark}
\newtheorem{question}{Question}

\usepackage[title]{appendix}

\renewcommand{\theequation}{\thesection.\arabic{equation}}

\begin{document}
\title[Darboux-type formula for Jacobi biorthogonal polynomials]{Darboux-type formula for Jacobi biorthogonal polynomials}

\author
{Zhaofeng Lin}
\address
{Zhaofeng Lin: School of Fundamental Physics and Mathematical Sciences, HIAS, University of Chinese Academy of Sciences, Hangzhou, 310024, China}
\email{linzhaofeng@ucas.ac.cn}

\author
{Kai Wang}
\address
{Kai Wang: School of Mathematical Sciences, Fudan University, Shanghai, 200433, China}
\email{kwang@fudan.edu.cn}

\author
{Zhanhang Zheng}
\address
{Zhanhang Zheng: School of Mathematical Sciences, Fudan University, Shanghai, 200433, China}
\email{zhzheng22@m.fudan.edu.cn}

\thanks{This work is supported by the National Key R\&D Program of China (2024YFA1013400) and the National Natural Science Foundation of China (No.12231005).}

\begin{abstract}
In this paper, we study the asymptotic behavior of Jacobi biorthogonal polynomials. A Darboux-type formula is established using the method of steepest descent. In the proof, we construct an appropriate contour to apply the Rodrigues formula. Our result reduces to the classical Darboux formula in the orthogonal case.
\end{abstract}

\subjclass[2020]{Primary 33C45; Secondary 41A60, 42C05, 30E20.}

\keywords{Biorthogonal polynomial, Jacobi weight, Method of steepest descent, Rodrigues formula, Darboux formula.}

\maketitle

\setcounter{equation}{0}

\section{Introduction}
The notion of orthogonal polynomials was extended to biorthogonal polynomials by Konhauser \cite{Kon1, Kon2}. The asymptotics of biorthogonal polynomials have been extensively studied. Lubinsky and Sidi \cite{LSi} derived strong asymptotics for biorthogonal polynomials with respect to powers of $\log x$ by the method of steepest descent. Borrego-Morell and Rafaeli \cite{BR} investigated the uniform asymptotics for a class of biorthogonal polynomials on the unit circle. In \cite{WZ}, Wang and Zhang obtained the asymptotics for certain Laguerre biorthogonal polynomials via the Riemann–Hilbert approach.

The aim of this paper is to establish a Darboux-type formula for Jacobi biorthogonal polynomials using Rodrigues formula in combination with the method of steepest descent. In this section, we first recall Rodrigues formula for Jacobi orthogonal polynomials and the well-known Darboux formula that characterizes their asymptotic behavior. Next, we review the definition of Jacobi biorthogonal polynomials and their Rodrigues formula established by Lubinsky and Soran \cite{LS}, as well as by Lubinsky and Stahl \cite{LSt}. Finally, we state our main result, namely, Theorem~\ref{thm-Darbo-Formu-biop}, which is a Darboux-type formula that provides the asymptotics of Jacobi biorthogonal polynomials.

\subsection{Jacobi orthogonal polynomials}
Consider the Jacobi weight $\omega^{(a,b)}(x)=(1-x)^a(1+x)^b$ on the interval $[-1,1]$, where the integrability of $\omega^{(a,b)}(x)$ is ensured by requiring $a,b>-1$. The Jacobi polynomial $P_n^{(a,b)}(x)$ of degree $n$ is orthogonal on $[-1,1]$ with respect to the Jacobi weight function $\omega^{(a,b)}(x)$. To be precise, the orthogonality of the polynomial $P_n^{(a,b)}(x)$ means that
\[
\int_{-1}^{1}P_n^{(a,b)}(x)\,x^j\,\omega^{(a,b)}(x)\mathrm{d}x\left\{
\begin{array}{cl}
	=0&\text{ if $j=0,1,\cdots,n-1$}
	\vspace{2mm}
	\\
	\neq0&\text{ if $j=n$}
\end{array}\right..
\]
Equivalently, we can write
\begin{align}\label{def-op}
	\int_{-1}^{1}P_n^{(a,b)}(x)\,(1-x)^j\,\omega^{(a,b)}(x)\mathrm{d}x\left\{
	\begin{array}{cl}
		=0&\text{ if $j=0,1,\cdots,n-1$}
		\vspace{2mm}
		\\
		\neq0&\text{ if $j=n$}
	\end{array}\right..
\end{align}

The Jacobi orthogonal polynomial $P_n^{(a,b)}(x)$ is unique up to a non-zero constant factor.  Following \cite[Formula~(4.1.1)]{Sze}, the normalization of $P_n^{(a,b)}(x)$ can be chosen as
\[
P_n^{(a,b)}(1)=\frac{\Gamma(n+a+1)}{\Gamma(n+1)\Gamma(a+1)}.
\]
The polynomial $P_n^{(a,b)}(x)$ can be expressed in various explicit forms, such as the following important representation (see, e.g., \cite[Formula~(4.3.2)]{Sze}):
\begin{align}\label{repre-op}
	P_n^{(a,b)}(x)=\sum_{r=0}^{n}\frac{(-1)^r\,\Gamma(n+a+1)\Gamma(n+b+1)}{\Gamma(n-r+1)\Gamma(r+a+1)\Gamma(r+1)\Gamma(n-r+b+1)}\Big(\frac{1-x}{2}\Big)^{r}\Big(\frac{1+x}{2}\Big)^{n-r}.
\end{align}

The Rodrigues formula for the Jacobi orthogonal polynomial $P_n^{(a,b)}(x)$ is
\begin{align}\label{Rod-op}
	P_n^{(a,b)}(x)=\frac{(-1)^n}{2^nn!}(1-x)^{-a}(1+x)^{-b}\Big(\frac{\mathrm{d}}{\mathrm{d}x}\Big)^n\big\{(1-x)^{n+a}(1+x)^{n+b}\big\}.
\end{align}
Hence by Cauchy's integral formula, we obtain the following form:
\begin{align}\label{Rod-int-op}
P_n^{(a,b)}(x)=\frac{1}{2\pi i}\oint_{C}\Big(\frac{1}{2}\frac{\xi^2-1}{\xi-x}\Big)^n\Big(\frac{1-\xi}{1-x}\Big)^a\Big(\frac{1+\xi}{1+x}\Big)^b\frac{\mathrm{d}\xi}{\xi-x}.
\end{align}
Here $x\neq\pm1$, and the integration is extended in the positive sense around a closed contour $C$ enclosing $\xi=x$, but not the points $\xi=\pm1$. The reader is referred to \cite[Formulas~(4.3.1) and (4.6.1)]{Sze} for the above Rodrigues formula.

Replacing $x$ with $\cos\theta$ in the integral form \eqref{Rod-int-op} of the Rodrigues formula, we obtain
\[
P_n^{(a,b)}(\cos\theta)=\frac{1}{2\pi i}\oint_{C}\Big(\frac{1}{2}\frac{\xi^2-1}{\xi-\cos\theta}\Big)^n\Big(\frac{1-\xi}{1-\cos\theta}\Big)^a\Big(\frac{1+\xi}{1+\cos\theta}\Big)^b\frac{\mathrm{d}\xi}{\xi-\cos\theta}.
\]
Then, using the method of steepest descent, one can show the well-known Darboux formula (see, e.g., \cite[Theorem~8.21.8]{Sze}), which gives the uniform asymptotic estimate for the Jacobi orthogonal polynomial $P_n^{(a,b)}$ as $n\to\infty$:
\begin{align}\label{Darbo-Formu}
	P_n^{(a,b)}(\cos\theta)=\pi^{-\frac{1}{2}}n^{-\frac{1}{2}}\Big(\sin\frac{\theta}{2}\Big)^{-a-\frac{1}{2}}\Big(\cos\frac{\theta}{2}\Big)^{-b-\frac{1}{2}}\cos\Big(N\theta-\frac{a\pi}{2}-\frac{\pi}{4}\Big)+O(n^{-\frac{3}{2}})
\end{align}
with $N=n+\frac{a+b+1}{2}$. Here the bound of the error term holds uniformly for $\theta\in[\varepsilon,\pi-\varepsilon]$ with any fixed $0<\varepsilon<\frac{\pi}{2}$. Note that $\cos\theta$ strictly decreases from $1$ to $-1$ as $\theta$ varies from $0$ to $\pi$. Darboux formula actually gives a uniform estimate for $P_n^{(a,b)}(x)$ on any closed subinterval of $(-1,1)$.

\subsection{Jacobi biorthogonal polynomials}
Fix $\alpha>0$. Consider the Jacobi weight $\omega^{(a,b)}(x)=(1-x)^a(1+x)^b$ with $a,b>-1$ on the interval $[-1,1]$.  The Jacobi biorthogonal polynomial $P_n^{(\alpha,a,b)}(x)$ of degree $n$ satisfies the following biorthogonality relation \eqref{def-biop}, which extends the usual orthogonality relation \eqref{def-op}:
\begin{align}\label{def-biop}
	\int_{-1}^{1}P_n^{(\alpha,a,b)}(x)\,(1-x)^{\alpha j}\,\omega^{(a,b)}(x)\mathrm{d}x\left\{
	\begin{array}{cl}
		=0&\text{ if $j=0,1,\cdots,n-1$}
		\vspace{2mm}
		\\
		\neq0&\text{ if $j=n$}
	\end{array}\right..
\end{align}

The existence and uniqueness (up to a non-zero constant factor) of the Jacobi biorthogonal polynomial $P_n^{(\alpha,a,b)}(x)$ are guaranteed by Lubinsky and Soran \cite{LS}. We can choose the normalization of $P_n^{(\alpha,a,b)}(x)$ as
\[
P_n^{(\alpha,a,b)}(1)=\frac{\Gamma(n+\frac{a+1}{\alpha})}{\Gamma(n+1)\Gamma(\frac{a+1}{\alpha})}.
\]
Then, according to Madhekar and Thakare \cite[Formula~(11)]{MT}, the Jacobi biorthogonal polynomial $P_n^{(\alpha,a,b)}(x)$ has the following representation:
\begin{align}\label{repre-biop}
		P_n^{(\alpha,a,b)}(x)=\sum_{r=0}^{n}\sum_{s=0}^{r}\frac{(-1)^s\,\Gamma(n+\frac{s+a+1}{\alpha})\Gamma(n+b+1)}{\Gamma(n+1)\Gamma(s+1)\Gamma(r-s+1)\Gamma(\frac{s+a+1}{\alpha})\Gamma(n-r+b+1)}\Big(\frac{1-x}{2}\Big)^{r}\Big(\frac{1+x}{2}\Big)^{n-r}.
\end{align}
When $\alpha=1$, by careful calculation, one can derive
\begin{align}\label{cal-gam}
	\sum_{s=0}^{r}\frac{(-1)^s\,\Gamma(n+s+a+1)}{\Gamma(n+1)\Gamma(s+1)\Gamma(r-s+1)\Gamma(s+a+1)}=\frac{(-1)^r\,\Gamma(n+a+1)}{\Gamma(n-r+1)\Gamma(r+a+1)\Gamma(r+1)}.
\end{align}
We leave the calculation of identity \eqref{cal-gam} to Appendix~\ref{prf-gam}. Thus by \eqref{cal-gam}, the biorthogonal expression \eqref{repre-biop} reduces to the orthogonal case \eqref{repre-op}, namely,
\[
P_n^{(\alpha=1,a,b)}(x)=P_n^{(a,b)}(x).
\]

The Rodrigues formula for the Jacobi biorthogonal polynomial $P_n^{(\alpha, a,b)}(x)$ was established by Lubinsky and Soran \cite[Theorem~1]{LS}, and by Lubinsky and Stahl \cite[Theorem~1]{LSt} for the special case $a=b=0$. In \cite{LS} and \cite{LSt}, the authors consider Rodrigues formula for the Jacobi biorthogonal polynomials on the unit interval $[0,1]$ with weight function $x^a(1-x)^b$. Then, using the transformation $x\mapsto\frac{1-x}{2}$ to convert biorthogonal polynomials on $[0,1]$ to those on $[-1,1]$ and vice versa, we obtain the following Rodrigues formula for $P_n^{(\alpha, a,b)}(x)$:
\begin{align}\label{Rod-biop}
	\begin{split}
		P_n^{(\alpha,a,b)}(x)&=\frac{(-1)^n2^n}{n!}\Big(\frac{1-x}{2}\Big)^{\alpha-a-1}\Big(\frac{1+x}{2}\Big)^{-b}\\
		&\quad\times\Big[\Big(\frac{\mathrm{d}}{\mathrm{d}\xi}\Big)^n\Big\{\Big(\frac{1-\xi}{2}\Big)^{n+\frac{a+1}{\alpha}-1}\Big[1-\Big(\frac{1-\xi}{2}\Big)^{\frac{1}{\alpha}}\Big]^{n+b}\Big\}\Big]_{\xi=1-2(\frac{1-x}{2})^{\alpha}}.
	\end{split}
\end{align}
Then, by Cauchy's integral formula, we can write
\begin{align}\label{Rod-int-biop}
	\begin{split}
		P_n^{(\alpha,a,b)}(x)&=\frac{1}{2\pi i}\oint_{C}\Big\{\frac{(\xi-1)[1-(\frac{1-\xi}{2})^{\frac{1}{\alpha}}]}{\xi-[1-2(\frac{1-x}{2})^{\alpha}]}\Big\}^n\Big(\frac{1-\xi}{2}\Big)^{\frac{a+1}{\alpha}-1}\Big(\frac{1-x}{2}\Big)^{\alpha-a-1}\\
		&\qquad\qquad\times\Big[1-\Big(\frac{1-\xi}{2}\Big)^{\frac{1}{\alpha}}\Big]^b\Big(\frac{1+x}{2}\Big)^{-b}\frac{\mathrm{d}\xi}{\xi-[1-2(\frac{1-x}{2})^{\alpha}]}.
	\end{split}
\end{align}
Here $x\neq\pm1$, and the integration is extended in the positive sense around a closed contour $C$ enclosing $\xi=1-2(\frac{1-x}{2})^{\alpha}$, but not the points $\xi=\pm1$. In particular, in the orthogonal case $\alpha=1$, the formulas \eqref{Rod-biop} and \eqref{Rod-int-biop} reduce to the formulas \eqref{Rod-op} and \eqref{Rod-int-op} respectively.

\subsection{Main result: Darboux-type formula}
Our main goal in this paper is to establish the Darboux-type formula for the Jacobi biorthogonal polynomial $P_n^{(\alpha, a,b)}$, analogous to \eqref{Darbo-Formu} for the Jacobi orthogonal polynomial $P_n^{(a,b)}$.

Replacing $x$ with $x(\theta)$ in the integral form \eqref{Rod-int-biop} of the Rodrigues formula, we obtain
\begin{align}\label{Rod-int-biop-tha}
	\begin{split}
		P_n^{(\alpha,a,b)}\big(x(\theta)\big)&=\frac{1}{2\pi i}\oint_{C}\Big\{\frac{(\xi-1)[1-(\frac{1-\xi}{2})^{\frac{1}{\alpha}}]}{\xi-[1-2(\frac{1-x(\theta)}{2})^{\alpha}]}\Big\}^n\Big(\frac{1-\xi}{2}\Big)^{\frac{a+1}{\alpha}-1}\Big(\frac{1-x(\theta)}{2}\Big)^{\alpha-a-1}\\
		&\qquad\qquad\times\Big[1-\Big(\frac{1-\xi}{2}\Big)^{\frac{1}{\alpha}}\Big]^b\Big(\frac{1+x(\theta)}{2}\Big)^{-b}\frac{\mathrm{d}\xi}{\xi-[1-2(\frac{1-x(\theta)}{2})^{\alpha}]}.
	\end{split}
\end{align}
We aim to find suitable $x(\theta)$ and the closed contour $C$ such that the method of steepest descent can be applied to expression \eqref{Rod-int-biop-tha}. Furthermore, $x(\theta)$ is required to cover $(-1,1)$ as $\theta$ varies over $(0,\pi)$. This would allow us to obtain a uniform asymptotic estimate on any closed subinterval of $(-1,1)$ for the Jacobi biorthogonal polynomial $P_n^{(\alpha, a,b)}(x)$.

\subsubsection{Choice of $x(\theta)$}
We now begin by defining $x(\theta)$. As for the construction of the closed contour $C$, it will be given in Section~\ref{Constru-clocon}. For any $\alpha>0$ and $\theta\in(0,\pi)$, let
\begin{align}\label{def-Tha}
	\Theta_{\alpha}(\theta):=\frac{\sin\theta}{(1+\alpha)\sin\frac{\pi-\theta}{1+\alpha}},\quad\Theta_{\frac{1}{\alpha}}(\theta):=\frac{\sin\theta}{(1+\frac{1}{\alpha})\sin\frac{\pi-\theta}{1+\frac{1}{\alpha}}}.
\end{align}
Define $x(\theta)=x_{\alpha}(\theta)$ as follows:
\begin{align}\label{def-xtha}
	x(\theta):=1-2\Theta_{\frac{1}{\alpha}}(\theta)\big[\Theta_{\alpha}(\theta)\big]^{\frac{1}{\alpha}}=1-2\Big[\frac{\sin\theta}{(1+\frac{1}{\alpha})\sin\frac{\pi-\theta}{1+\frac{1}{\alpha}}}\Big]\Big[\frac{\sin\theta}{(1+\alpha)\sin\frac{\pi-\theta}{1+\alpha}}\Big]^{\frac{1}{\alpha}}.
\end{align}
It is easy to verify that when $\alpha=1$, our $x(\theta)$ reduces to $\cos\theta$ in the orthogonal case. 

\begin{lemma}\label{lem-xtha}
	For any fixed $\alpha>0$, the function $x(\theta)$ is strictly decreasing on $(0,\pi)$ with $x(0^+)=1$ and $x(\pi^-)=-1$.
\end{lemma}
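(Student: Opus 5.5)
The plan is to reduce the statement to the monotonicity of each of the two factors in \eqref{def-xtha} separately. Write
\[
F(\theta):=\Theta_{\frac{1}{\alpha}}(\theta)\big[\Theta_{\alpha}(\theta)\big]^{\frac{1}{\alpha}},
\]
so that $x(\theta)=1-2F(\theta)$. It then suffices to show three things: $F$ is strictly increasing on $(0,\pi)$, $F(0^+)=0$, and $F(\pi^-)=1$.

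First, positivity. Put $\varphi=\pi-\theta\in(0,\pi)$ and $\beta\in\{1+\alpha,\,1+\frac{1}{\alpha}\}$, so $\beta>1$. Then $\varphi/\beta\in(0,\pi)$, hence both $\sin\theta$ and $\sin(\varphi/\beta)$ are positive. Therefore both $\Theta_\alpha$ and $\Theta_{\frac1\alpha}$ are positive and smooth on $(0,\pi)$. Since $\frac1\alpha>0$, $F$ is a product of positive factors, and it is strictly increasing as soon as each of $\Theta_\alpha$ and $\Theta_{\frac1\alpha}$ is strictly increasing.

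Second, the monotonicity step, which is the only real point. Consider a generic factor $\Theta(\theta)=\sin\theta/(\beta\sin(\varphi/\beta))$ with $\beta>1$. Using $\cot\theta=-\cot\varphi$ and $d\varphi/d\theta=-1$, its logarithmic derivative is
\[
\frac{\Theta'(\theta)}{\Theta(\theta)}=\cot\theta+\frac{1}{\beta}\cot\frac{\varphi}{\beta}=\frac{1}{\varphi}\Big(\frac{\varphi}{\beta}\cot\frac{\varphi}{\beta}-\varphi\cot\varphi\Big).
\]
The function $t\mapsto t\cot t$ is strictly decreasing on $(0,\pi)$. This is standard: its derivative is $(\sin t\cos t-t)/\sin^2 t=(\tfrac12\sin 2t-t)/\sin^2t<0$. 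Since $0<\varphi/\beta<\varphi<\pi$, the bracket above is strictly positive. Hence $\Theta'>0$ on $(0,\pi)$ for both choices of $\beta$, and $F$ is strictly increasing, i.e.\ $x(\theta)$ is strictly decreasing.

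Finally, the endpoint limits. As $\theta\to0^+$, we have $\sin\theta\to0$ while $\sin(\pi/\beta)>0$, so both $\Theta$'s tend to $0$. Thus $F\to0$ and $x(0^+)=1$. As $\theta\to\pi^-$, i.e.\ $\varphi\to0^+$, we have $\sin\theta=\sin\varphi\sim\varphi$ and $\beta\sin(\varphi/\beta)\sim\varphi$, so each $\Theta\to1$. Thus $F\to1$ and $x(\pi^-)=-1$.

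I expect no serious obstacle. The only step needing care is recognizing that the logarithmic derivative reduces to the monotonicity of $t\cot t$, which makes the argument uniform in $\alpha>0$. In particular, no case split between $\alpha<1$ and $\alpha>1$ is needed, because both $1+\alpha$ and $1+\frac1\alpha$ exceed $1$.
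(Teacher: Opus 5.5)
Your proof is correct. Its overall structure matches the paper's: show that $\Theta_\alpha$ and $\Theta_{\frac{1}{\alpha}}$ are positive and strictly increasing, then read off the endpoint values. The difference is in how you establish the key sign $\Theta'>0$.

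The paper writes $\Theta_\alpha'=\mathrm{U}_\alpha/\big((1+\alpha)^2\sin^2\frac{\pi-\theta}{1+\alpha}\big)$. It then proves $\mathrm{U}_\alpha>0$ by differentiating once more, obtaining $\mathrm{U}_\alpha'=-\frac{\alpha(2+\alpha)}{1+\alpha}\sin\theta\sin\frac{\pi-\theta}{1+\alpha}<0$, and using $\mathrm{U}_\alpha(\pi^-)=0$. Since $\mathrm{U}_\alpha=(1+\alpha)\sin\theta\sin\frac{\pi-\theta}{1+\alpha}\cdot\Theta_\alpha'/\Theta_\alpha$, both arguments certify positivity of the same quantity. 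You instead write the logarithmic derivative as a difference of two values of the strictly decreasing function $t\mapsto t\cot t$.

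Your route has some advantages. It makes the role of $\beta>1$ transparent: it is exactly the ordering $\varphi/\beta<\varphi$, whereas in the paper it is hidden in the factor $\beta^2-1=\alpha(2+\alpha)$. It handles both factors with one standard fact. You also justify the endpoint limits, which the paper only asserts.

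The paper's computation, for its part, is self-contained and produces the explicit formula for $\Theta'$ that is reused later. Indeed, for $\beta=1+\frac{1}{\alpha}$ your logarithmic derivative of $\Theta_{\frac{1}{\alpha}}$ is exactly $d(\theta)/(1+\alpha)$, with $d$ as in \eqref{def-d} and \eqref{cal-Tha'}. So your argument also gives $d>0$ on $(0,\pi)$ directly, without passing through Lemma~\ref{lem-d}.
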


\begin{proof}
	Recalling the definition of $\Theta_{\alpha}(\theta)$ in \eqref{def-Tha}, one can show that
	\[
	\Theta_{\alpha}'(\theta)=\frac{\mathrm{U}_{\alpha}(\theta)}{(1+\alpha)^2\sin^2\frac{\pi-\theta}{1+\alpha}}\quad\text{with}\quad\mathrm{U}_{\alpha}(\theta):=(1+\alpha)\cos\theta\sin\frac{\pi-\theta}{1+\alpha}+\sin\theta\cos\frac{\pi-\theta}{1+\alpha}.
	\]
	Note that
	\[
	\mathrm{U}_{\alpha}'(\theta)=-\frac{\alpha(2+\alpha)}{1+\alpha}\sin\theta\sin\frac{\pi-\theta}{1+\alpha}<0,\quad \theta\in(0,\pi).
	\]
	Hence $\mathrm{U}_{\alpha}(\theta)$ is strictly decreasing on $(0,\pi)$ with $\mathrm{U}_{\alpha}(\pi^-)=0$, and consequently $\Theta_{\alpha}'(\theta)>0$ for $\theta\in(0,\pi)$. Thus $\Theta_{\alpha}(\theta)$ is strictly increasing on $(0,\pi)$ with $\Theta_{\alpha}(0^+)=0$ and $\Theta_{\alpha}(\pi^-)=1$. Similarly, $\Theta_{\frac{1}{\alpha}}(\theta)$ is also strictly increasing on $(0,\pi)$ with $\Theta_{\frac{1}{\alpha}}(0^+)=0$ and $\Theta_{\frac{1}{\alpha}}(\pi^-)=1$. Finally, from the expression of $x(\theta)$ in \eqref{def-xtha}, we conclude that $x(\theta)$ is strictly decreasing on $(0,\pi)$ with $x(0^+)=1$ and $x(\pi^-)=-1$.
\end{proof}

\subsubsection{Darboux-type formula}
It follows from Lemma~\ref{lem-xtha} that in the general biorthogonal case $\alpha>0$, our choice of $x(\theta)$ in the form of \eqref{def-xtha} can cover $(-1,1)$ as $\theta$ varies over $(0,\pi)$. We now proceed to establish the Darboux-type formula for the Jacobi biorthogonal polynomial $P_n^{(\alpha, a,b)}\big(x(\theta)\big)$, which provides a uniform asymptotic estimate on any closed subinterval of $(-1,1)$ for the Jacobi biorthogonal polynomial $P_n^{(\alpha, a,b)}(x)$.

Recall the choice of  $x(\theta)$ defined in \eqref{def-xtha}:
\[
x(\theta)=x_{\alpha}(\theta)=1-2\Theta_{\frac{1}{\alpha}}(\theta)\big[\Theta_{\alpha}(\theta)\big]^{\frac{1}{\alpha}},
\]
where $\Theta_{\alpha}(\theta)$ and $\Theta_{\frac{1}{\alpha}}(\theta)$ are defined in \eqref{def-Tha}:
\[
\Theta_{\alpha}(\theta)=\frac{\sin\theta}{(1+\alpha)\sin\frac{\pi-\theta}{1+\alpha}},\quad\Theta_{\frac{1}{\alpha}}(\theta)=\frac{\sin\theta}{(1+\frac{1}{\alpha})\sin\frac{\pi-\theta}{1+\frac{1}{\alpha}}}.
\]
Define a function $M_{\alpha}(\theta)$ independent of $n$  as follows:
\begin{align}\label{def-Balp}
	M_{\alpha}(\theta):=\frac{e^{-i[\frac{\pi}{2}+\frac{\pi-\theta}{1+\alpha}(a+b+1)]}\cdot[e^{i\frac{\pi-\theta}{1+\alpha}}-\Theta_{\frac{1}{\alpha}}(\theta)]^{b+\frac{1}{2}}\cdot[e^{i\frac{\pi-\theta}{1+\frac{1}{\alpha}}}-\Theta_{\alpha}(\theta)]}{[\Theta_{\frac{1}{\alpha}}(\theta)]^{\frac{1}{2}}\,[\Theta_{\alpha}(\theta)]^{\frac{a+1}{\alpha}-1}\cdot\{1-\Theta_{\frac{1}{\alpha}}(\theta)[\Theta_{\alpha}(\theta)]^{\frac{1}{\alpha}}\}^b\cdot\{1+[\Theta_{\alpha}(\theta)]^2-2\Theta_{\alpha}(\theta)\cos\frac{\pi-\theta}{1+\frac{1}{\alpha}}\}}.
\end{align}

\begin{theorem}\label{thm-Darbo-Formu-biop}
	For any fixed $\alpha\geq1$ and $a,b>-1$, as $n\to\infty$, we have
	\[
	P_n^{(\alpha, a,b)}\big(x(\theta)\big)=\frac{\sqrt{2}\,\alpha}{\sqrt{1+\alpha}}\pi^{-\frac{1}{2}}n^{-\frac{1}{2}}\Bigg(\frac{\sin\frac{\pi-\theta}{1+\alpha}}{\sin\frac{\pi-\theta}{1+\frac{1}{\alpha}}}\Bigg)^n\Big(\mathrm{Re}\big\{M_{\alpha}(\theta)e^{in\theta}\big\}\Big)\big[1+O(n^{-1})\big],
	\]
	where the bound of the error term holds uniformly for $\theta\in[\varepsilon,\pi-\varepsilon]$ with any fixed $0<\varepsilon<\frac{\pi}{2}$.
\end{theorem}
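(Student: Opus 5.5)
The plan is to apply the method of steepest descent to the contour integral \eqref{Rod-int-biop-tha}, after a change of variables that makes the saddle-point structure explicit. Put $X:=\frac{1-x(\theta)}{2}=\Theta_{\frac1\alpha}(\theta)[\Theta_\alpha(\theta)]^{\frac1\alpha}$ and substitute $u=(\frac{1-\xi}{2})^{\frac1\alpha}$, so that $\xi=1-2u^\alpha$. Then
\[
\xi-[1-2X^\alpha]=2(X^\alpha-u^\alpha),\qquad \frac{(\xi-1)(1-u)}{\xi-[1-2X^\alpha]}=\frac{u^\alpha(1-u)}{u^\alpha-X^\alpha}.
\]
Hence the integrand becomes $e^{n\phi(u)}g(u)\,\mathrm{d}u$, where
\[
\phi(u)=\log\frac{u^\alpha(1-u)}{u^\alpha-X^\alpha},
\]
and $g$ collects the factors $u^{a+1-\alpha}(1-u)^b$, the prefactors in $x(\theta)$, the Jacobian $-2\alpha u^{\alpha-1}$ and $(u^\alpha-X^\alpha)^{-1}$. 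The pole $\xi=1-2X^\alpha$ corresponds to $u=X$ on the positive axis. The points $\xi=\pm1$ correspond to $u=0$ and $u=1$. The cut of $u^\alpha$ is placed along $(-\infty,0]$.

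\textbf{Step 1: saddle points.} The saddle-point equation is
\[
\frac{\alpha}{u}-\frac{1}{1-u}-\frac{\alpha u^{\alpha-1}}{u^\alpha-X^\alpha}=0.
\]
I will verify that it has the complex-conjugate pair of roots
\[
u_\pm=[\Theta_\alpha(\theta)]^{\frac1\alpha}e^{\pm i\frac{\pi-\theta}{1+\alpha}}.
\]
This is what the parametrization \eqref{def-xtha} is engineered to give. With the polar ansatz $u=re^{i\psi}$, separating real and imaginary parts should produce $r^\alpha=\Theta_\alpha$ and $\psi=\frac{\pi-\theta}{1+\alpha}$. The quantity $X^\alpha$ then equals the expression forced by \eqref{def-xtha}, with the angle $\frac{\pi-\theta}{1+\frac1\alpha}=\pi-\theta-\psi$ appearing naturally.

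Next I will compute $e^{\phi(u_\pm)}$. Its modulus should be $\sin\frac{\pi-\theta}{1+\alpha}/\sin\frac{\pi-\theta}{1+\frac1\alpha}$ and its argument $\pm\theta$, via the law of sines in the triangle with vertices $0$, $1$, $u_+$. I will then compute $\phi''(u_\pm)$. Its modulus and argument, combined with $g(u_\pm)$, should produce the factor $\frac{\sqrt2\,\alpha}{\sqrt{1+\alpha}}\pi^{-\frac12}$ and the function $M_\alpha(\theta)$ of \eqref{def-Balp}. The denominator $1+\Theta_\alpha^2-2\Theta_\alpha\cos\frac{\pi-\theta}{1+\frac1\alpha}$ is $|e^{i\frac{\pi-\theta}{1+\frac1\alpha}}-\Theta_\alpha|^2$, and I expect it to come from $\phi''$.

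\textbf{Step 2: the contour.} Deform $C$ in the $u$-plane into a closed curve that encircles $u=X$, avoids $0$, $1$ and the cut, and passes through $u_+$ and $u_-$ along the local steepest-descent directions. The natural candidate is the level curve $\{|e^{\phi(u)}|=|e^{\phi(u_+)}|\}$, or a piecewise curve made of two arcs through $u_\pm$ joined by segments near the real axis. On this curve one must show that $|e^{\phi}|$ attains its maximum only at $u_\pm$. Since $\alpha\ge1$, the image of a curve encircling $X$ once in the $u$-plane, staying in the sector $|\arg u|<\pi/\alpha$, is a simple closed curve in the $\xi$-plane encircling $1-2X^\alpha$ once and not $\pm1$. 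This legitimizes the substitution, and it is where the hypothesis $\alpha\ge1$ enters.

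\textbf{Step 3: assembling the asymptotics.} Apply the standard Laplace/steepest-descent estimate at each nondegenerate saddle. Each contributes $\sqrt{2\pi/(n|\phi''|)}\,e^{n\phi(u_\pm)}g(u_\pm)(1+O(n^{-1}))$. Away from the saddles the contour contributes an exponentially smaller amount. The two contributions are complex conjugates because the integrand is real on the real axis, so their sum is $2\,\mathrm{Re}\{\cdot\}$, which yields the stated formula. Uniformity for $\theta\in[\varepsilon,\pi-\varepsilon]$ follows because $u_\pm$ stay in a compact set away from $0$, $1$, $X$ and the real axis, $\phi''(u_\pm)$ is bounded away from zero, and the contour can be chosen to depend continuously on $\theta$.

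The main obstacle is Step 2: constructing an explicit global contour on which $|e^{\phi}|$ is strictly maximized at the two saddles, with uniform control of the rest. My first attempt will be to take two circular-type arcs through $u_\pm$, for instance the arc of $|u|=\Theta_\alpha^{1/\alpha}$ or the circle through $u_\pm$ and $1$, checking monotonicity of $|e^{\phi}|$ along them by differentiating in the angle. I will then close the curve near the real axis where $|e^\phi|$ is visibly smaller.
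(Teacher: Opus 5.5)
You follow essentially the paper's strategy. In your variable $u=(\frac{1-\xi}{2})^{1/\alpha}$ the paper's $C_+$ becomes the curve $\Theta_{\frac1\alpha}(\varphi)e^{-i\frac{\pi-\varphi}{1+\alpha}}$, $\varphi\in[0,\pi)$; the saddle at $\varphi=\theta$ is one of your conjugate pair, and the final $\mathrm{Re}$ comes from the same symmetry.

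\textbf{An error in Step 1.} Your saddle equation simplifies to $u^{\alpha+1}-(1+\alpha)X^\alpha u+\alpha X^\alpha=0$. Put $u=re^{i\psi}$ with $\psi=\frac{\pi-\theta}{1+\alpha}$. The imaginary part gives $X^\alpha=r^\alpha\Theta_\alpha(\theta)$, and the real part then forces $r=\Theta_{\frac1\alpha}(\theta)$, not $r^\alpha=\Theta_\alpha(\theta)$. For $\alpha=2$, $\theta=\pi/2$ the two radii are about $0.770$ and $0.816$, and only the first gives a root. So the arc $|u|=\Theta_\alpha(\theta)^{1/\alpha}$ does not pass through the saddles.

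\textbf{Problems with your other contour ideas.} The level curve $\{|e^{\phi}|=|e^{\phi(u_+)}|\}$ cannot work. On it $|e^{n\phi}|$ is constant, so nothing localizes. What you need is a path lying in $\{|e^{\phi}|\le|e^{\phi(u_\pm)}|\}$ that touches the boundary only at $u_\pm$. Two smaller points:
\begin{itemize}
\item $\phi''(u_\pm)=-\frac{1+\alpha}{u_\pm(1-u_\pm)}$. So the factor $1+\Theta_\alpha^2-2\Theta_\alpha\cos\frac{\pi-\theta}{1+\frac1\alpha}$ in $M_\alpha$ comes from $(u^\alpha-X^\alpha)^{-1}$ in $g$, not from $\phi''$.
\item Each saddle contributes the complex root $(-\phi''(u_\pm))^{-1/2}$, with branch fixed by the path direction, not $|\phi''|^{-1/2}$. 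That phase is needed to get $M_\alpha$ right.
\end{itemize}

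\textbf{The genuine gap is Step 2.} You flag it as the main obstacle but leave it open. Without it, Steps 1 and 3 give only the formal leading term, not the theorem.

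The missing idea is the paper's choice of contour: the \emph{locus of the saddle point itself} as $\theta$ runs over $(0,\pi)$. This is \eqref{parame-xiphi}, which in your variable is $\Theta_{\frac1\alpha}(\varphi)e^{\mp i\frac{\pi-\varphi}{1+\alpha}}$. It runs from $u=0$ to $u=1$; passing through the branch points is justified by \eqref{cancl-modifi}, and for $\alpha=1$ it is the unit circle in $\xi$. The curve does not depend on $\theta$, which makes uniformity automatic, and for each $\theta$ the saddle sits at $\varphi=\theta$.

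The hard work is then Lemma~\ref{lem-mono-T}: $T_\theta(\varphi)=|e^{f_\theta(\varphi)}|^2$ increases on $(0,\theta)$ and decreases on $(\theta,\pi)$.
\begin{itemize}
\item Because $T_\theta'(\theta)=0$ for \emph{every} $\theta$, $T_\theta'(\varphi)$ factors as in \eqref{cal-sim-T'} with a factor $s(\varphi)-s(\theta)$.
\item One must show the other factor, $w(\varphi)-u(\varphi)s(\varphi)s(\theta)$, never vanishes. Here $u,w,s$ are the paper's functions from \eqref{def-uvw} and \eqref{def-klrs}, unrelated to your variable $u$.
\item That is Claim~\ref{clm-anaT'}. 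It is proved through the monotonicity of $w/(us)$ on $(0,\varphi_0)$ and $(\varphi_0,\pi)$, which rests on $\lambda(\varphi)\ge0$ in Lemma~\ref{lem-del}.
\end{itemize}

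This last step is exactly where $\alpha\ge1$ is used, so your account of the hypothesis is off. The contour stays in $|\arg u|\le\frac{\pi}{1+\alpha}$, where $u\mapsto1-2u^\alpha$ is injective for every $\alpha>0$. Remark~\ref{rem-3} shows that what breaks for $0<\alpha<1$ is the unimodality argument, not the change of variables.
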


We will prove Theorem~\ref{thm-Darbo-Formu-biop} using the method of steepest descent in Section~\ref{prf-thm-metstedes}. It is worth mentioning that when $\alpha=1$, a simple calculation shows that
\[
x(\theta)=\cos\theta\quad\text{and}\quad M_{\alpha=1}(\theta)=e^{i(\frac{a+b+1}{2}\theta-\frac{a\pi}{2}-\frac{\pi}{4})}\Big(\sin\frac{\theta}{2}\Big)^{-a-\frac{1}{2}}\Big(\cos\frac{\theta}{2}\Big)^{-b-\frac{1}{2}}.
\]
Then the conclusion in Theorem~\ref{thm-Darbo-Formu-biop} reduces exactly to the well-known Darboux formula \eqref{Darbo-Formu} in the orthogonal case, with $N=n+\frac{a+b+1}{2}$:
\begin{align*}
	P_n^{(\alpha=1,a,b)}(\cos\theta)=\pi^{-\frac{1}{2}}n^{-\frac{1}{2}}\Big(\sin\frac{\theta}{2}\Big)^{-a-\frac{1}{2}}\Big(\cos\frac{\theta}{2}\Big)^{-b-\frac{1}{2}}\cos\Big(N\theta-\frac{a\pi}{2}-\frac{\pi}{4}\Big)+O(n^{-\frac{3}{2}}).
\end{align*}

\begin{remark}\label{rem-1}
	Although the parameter $\alpha$ of Jacobi biorthogonal polynomials ranges over $\alpha > 0$, the requirement for the parameter in Theorem~\ref{thm-Darbo-Formu-biop}, which is obtained using the method of steepest descent, is $\alpha \geq 1$. This is because the monotonicity condition for the method of steepest descent can only be verified when $\alpha \geq 1$; that is, Lemma~\ref{lem-mono-T} below requires $\alpha \geq 1$. As stated in Remark~\ref{rem-2} following Lemma~\ref{lem-mono-T}, a more essential reason is that the proof of Lemma~\ref{lem-mono-T} requires Claim~\ref{clm-anaT'}, which holds only for $\alpha \geq 1$. In Remark~\ref{rem-3} following Claim~\ref{clm-anaT'}, we will show that Claim~\ref{clm-anaT'} is false for $0 < \alpha < 1$, which also indicates that the method of steepest descent fails for the case $0 < \alpha < 1$.
\end{remark}

\begin{question}
	Although the method of steepest descent fails for $0<\alpha<1$, it may still be possible to derive a Darboux-type formula for Jacobi biorthogonal polynomials by other methods. If so, would it take the same form as in Theorem~\ref{thm-Darbo-Formu-biop}?
\end{question}

The following uniform estimate for the upper bound of $P_n^{(\alpha, a,b)}(x)$ on any closed subinterval of $(-1,1)$, or equivalently the uniform estimate for the upper bound of $P_n^{(\alpha, a,b)}(x(\theta))$ on any closed subinterval of $(0,\pi)$, is a direct corollary of Theorem~\ref{thm-Darbo-Formu-biop}.

\begin{corollary}\label{cor-upperbound-biop}
	For any fixed $\alpha\geq1$ and $a,b>-1$, and any fixed $0<\varepsilon<\frac{\pi}{2}$, there exists a constant $\mathrm{Const}>0$ such that for all $\theta\in[\varepsilon,\pi-\varepsilon]$ and all $n\geq0$,
	\[
	\big|P_n^{(\alpha, a,b)}\big(x(\theta)\big)\big|\leq\mathrm{Const}\cdot n^{-\frac{1}{2}}\Bigg(\frac{\sin\frac{\pi-\theta}{1+\alpha}}{\sin\frac{\pi-\theta}{1+\frac{1}{\alpha}}}\Bigg)^n.
	\]
\end{corollary}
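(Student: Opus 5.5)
Corollary~\ref{cor-upperbound-biop} should follow directly from Theorem~\ref{thm-Darbo-Formu-biop}, once one checks that the $n$-independent factor $M_{\alpha}(\theta)$ is bounded on the compact interval $[\varepsilon,\pi-\varepsilon]$. The remaining small values of $n$ are then handled by a finite maximum. I will split into large $n$ and finitely many small $n$.

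\emph{Step 1: $M_\alpha$ is continuous on $[\varepsilon,\pi-\varepsilon]$.} For $\theta\in(0,\pi)$, the proof of Lemma~\ref{lem-xtha} gives $\Theta_{\alpha}(\theta),\Theta_{\frac1\alpha}(\theta)\in(0,1)$. Hence the denominator factors $[\Theta_{\frac1\alpha}]^{1/2}$ and $[\Theta_\alpha]^{\frac{a+1}{\alpha}-1}$ are positive and continuous, and $1-\Theta_{\frac1\alpha}[\Theta_\alpha]^{1/\alpha}=\frac{1+x(\theta)}{2}>0$ by Lemma~\ref{lem-xtha}.

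The last factor equals $|e^{i\frac{\pi-\theta}{1+1/\alpha}}-\Theta_\alpha|^2$. Since $\frac{\pi-\theta}{1+1/\alpha}\in(0,\pi)$, the point $e^{i\frac{\pi-\theta}{1+1/\alpha}}$ is non-real, while $\Theta_\alpha$ is real, so this factor is strictly positive.

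In the numerator, $e^{i\frac{\pi-\theta}{1+\alpha}}-\Theta_{\frac1\alpha}$ has imaginary part $\sin\frac{\pi-\theta}{1+\alpha}>0$. It therefore stays in the upper half-plane, where the principal power $(\cdot)^{b+\frac12}$ is continuous.

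So $M_\alpha$ is continuous on the compact set, and $|M_\alpha(\theta)|\le K$ there. Consequently $|\mathrm{Re}\{M_\alpha(\theta)e^{in\theta}\}|\le K$ for all $n$.

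\emph{Step 2: large $n$.} By the uniformity in Theorem~\ref{thm-Darbo-Formu-biop}, there exist $n_0$ and $C_1$ such that $|1+O(n^{-1})|\le 2$ for all $n\ge n_0$ and $\theta\in[\varepsilon,\pi-\varepsilon]$. Then
\[
|P_n^{(\alpha,a,b)}(x(\theta))|\le \frac{2\sqrt2\,\alpha K}{\sqrt{\pi(1+\alpha)}}\,n^{-1/2}\Big(\frac{\sin\frac{\pi-\theta}{1+\alpha}}{\sin\frac{\pi-\theta}{1+\frac1\alpha}}\Big)^n .
\]

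\emph{Step 3: small $n$.} Consider $1\le n<n_0$. Each $P_n^{(\alpha,a,b)}(x)$ is a polynomial, hence bounded on $[-1,1]$. The ratio $\rho(\theta)=\sin\frac{\pi-\theta}{1+\alpha}/\sin\frac{\pi-\theta}{1+1/\alpha}$ is continuous and positive on $[\varepsilon,\pi-\varepsilon]$, because both angles lie in $(0,\pi)$. So $\rho^n n^{-1/2}$ is bounded below by a positive constant for these finitely many $n$, and enlarging the constant covers them.

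For $n=0$, the factor $n^{-1/2}$ must be read with the convention $n^{-1/2}:=1$, or as $\max(n,1)^{-1/2}$. Then $P_0$ is a constant and is trivially covered.

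Taking $\mathrm{Const}$ to be the maximum of the constants from Steps 2 and 3 proves the corollary. No step is a real obstacle; the only point needing care is the positivity of the denominator of $M_\alpha$ and the branch continuity in Step 1, which the argument above settles.
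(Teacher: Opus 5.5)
Your proposal is correct and follows the same route as the paper: show that $M_{\alpha}$ is bounded on $[\varepsilon,\pi-\varepsilon]$ because its denominator has a positive lower bound there, then apply the uniform asymptotics of Theorem~\ref{thm-Darbo-Formu-biop}. Two details you supply are left implicit in the paper: covering the finitely many small $n$ by enlarging the constant, and noting that $n=0$ needs a convention for $n^{-1/2}$.
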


\begin{proof}
	It suffices to note that the denominator of $M_{\alpha}(\theta)$ has a positive lower bound on $[\varepsilon,\pi-\varepsilon]$ and hence $M_{\alpha}(\theta)$ is bounded on $[\varepsilon,\pi-\varepsilon]$. Corollary~\ref{cor-upperbound-biop} then follows directly from Theorem~\ref{thm-Darbo-Formu-biop}.
\end{proof}

\section{The method of steepest descent}\label{prf-thm-metstedes}
In this section, we prove Theorem~\ref{thm-Darbo-Formu-biop} using the method of steepest descent. For this purpose, we first construct the closed contour $C$ appearing in the integral \eqref{Rod-int-biop-tha} derived from the Rodrigues formula. We then analyze the conditions required for the method of steepest descent to be applicable. Finally, we perform the analysis and estimation of the path integrals, thereby obtaining a Darboux-type formula that characterizes the asymptotic behavior of Jacobi biorthogonal polynomials.

\subsection{Construction  of the closed contour $C$}\label{Constru-clocon}
We first give the construction of the closed contour $C=C_{\alpha}$ appearing in expression \eqref{Rod-int-biop-tha} above. Let us consider the closed contour $C$ given by the following parametrization $\xi(\varphi)=\xi_{\alpha}(\varphi)$ with $\varphi\in[-\pi,\pi)$:
\begin{align}\label{parame-xiphi}
	\xi(\varphi):=\left\{
	\begin{array}{cl}
		1-2\Big[\frac{\sin\varphi}{(1+\frac{1}{\alpha})\sin\frac{\pi-\varphi}{1+\frac{1}{\alpha}}}\Big]^{\alpha}\exp\Big(-i\frac{\pi-\varphi}{1+\frac{1}{\alpha}}\Big)&\text{ if $0\leq\varphi<\pi$}
		\vspace{2mm}
		\\
		1-2\Big[\frac{-\sin\varphi}{(1+\frac{1}{\alpha})\sin\frac{\pi+\varphi}{1+\frac{1}{\alpha}}}\Big]^{\alpha}\exp\Big(i\frac{\pi+\varphi}{1+\frac{1}{\alpha}}\Big)\quad&\text{ if $-\pi\leq\varphi<0$}
	\end{array}\right..
\end{align}

The following Figure~\ref{clocon} shows the images of the closed contour obtained from expression \eqref{parame-xiphi} for the parameter $\alpha$ taking values $1/2$, $1$, $2$ and $4$. Although, as stated in Remark~\ref{rem-1} following Theorem~\ref{thm-Darbo-Formu-biop}, we only consider the case $\alpha\geq1$, the figure for $\alpha=1/2$ is also included for illustration.

\begin{figure}[htbp]
	\centering
	\begin{minipage}{0.23\textwidth}
		\centering
		\includegraphics[width=\linewidth]{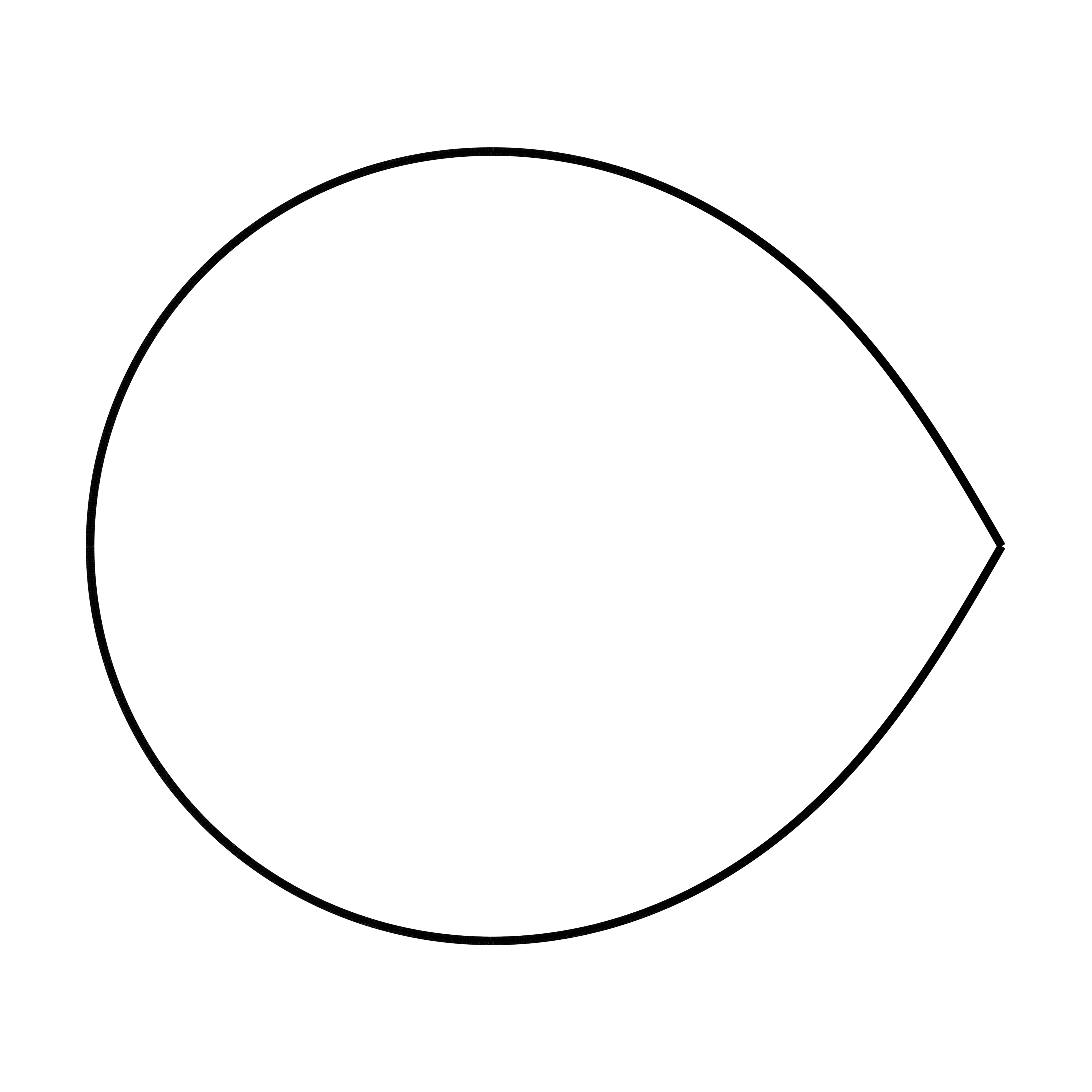}
		\text{$\alpha=1/2$}
	\end{minipage}
	\hfill
	\begin{minipage}{0.23\textwidth}
		\centering
		\includegraphics[width=\linewidth]{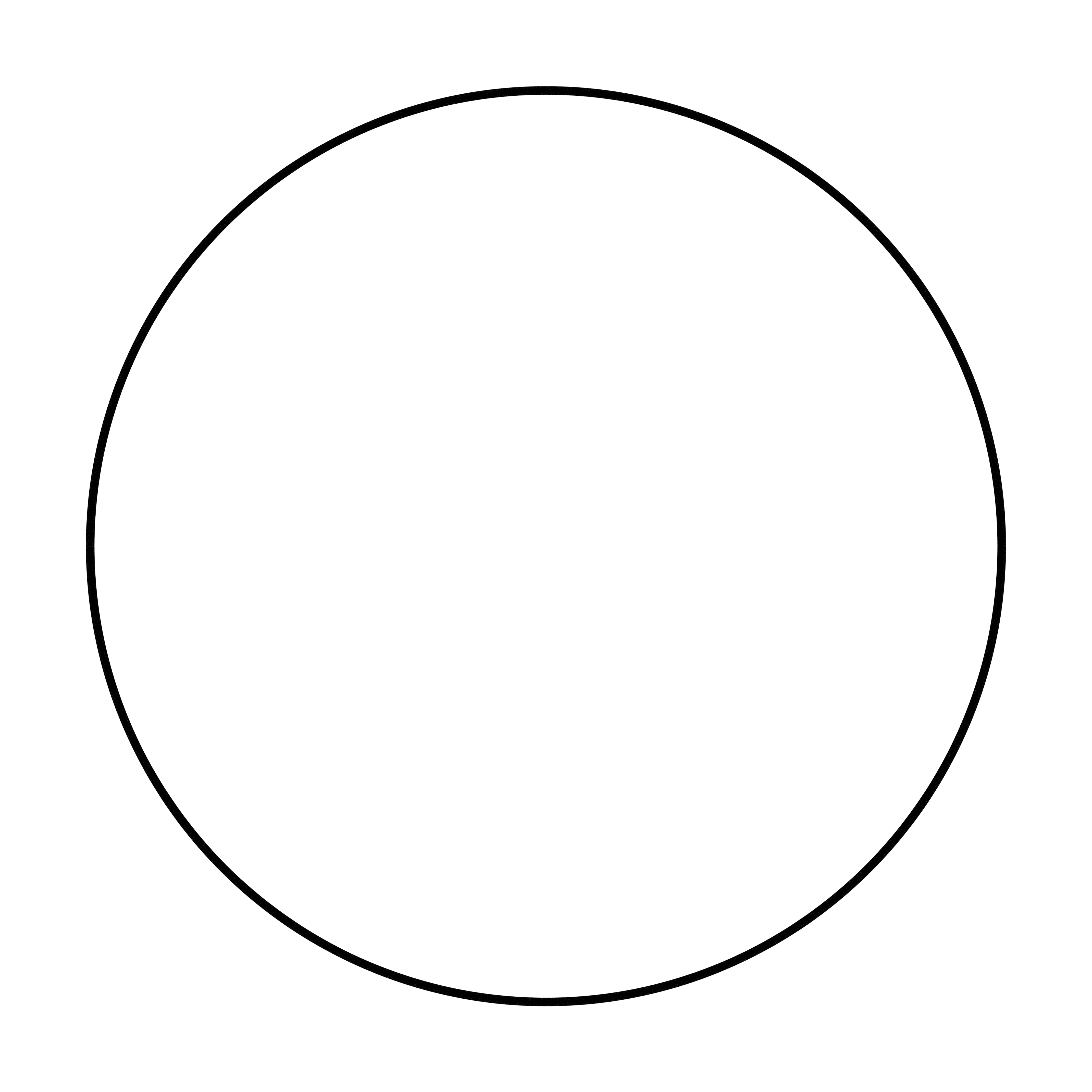}
		\text{$\alpha=1$}
	\end{minipage}
	\hfill
	\begin{minipage}{0.23\textwidth}
		\centering
		\includegraphics[width=\linewidth]{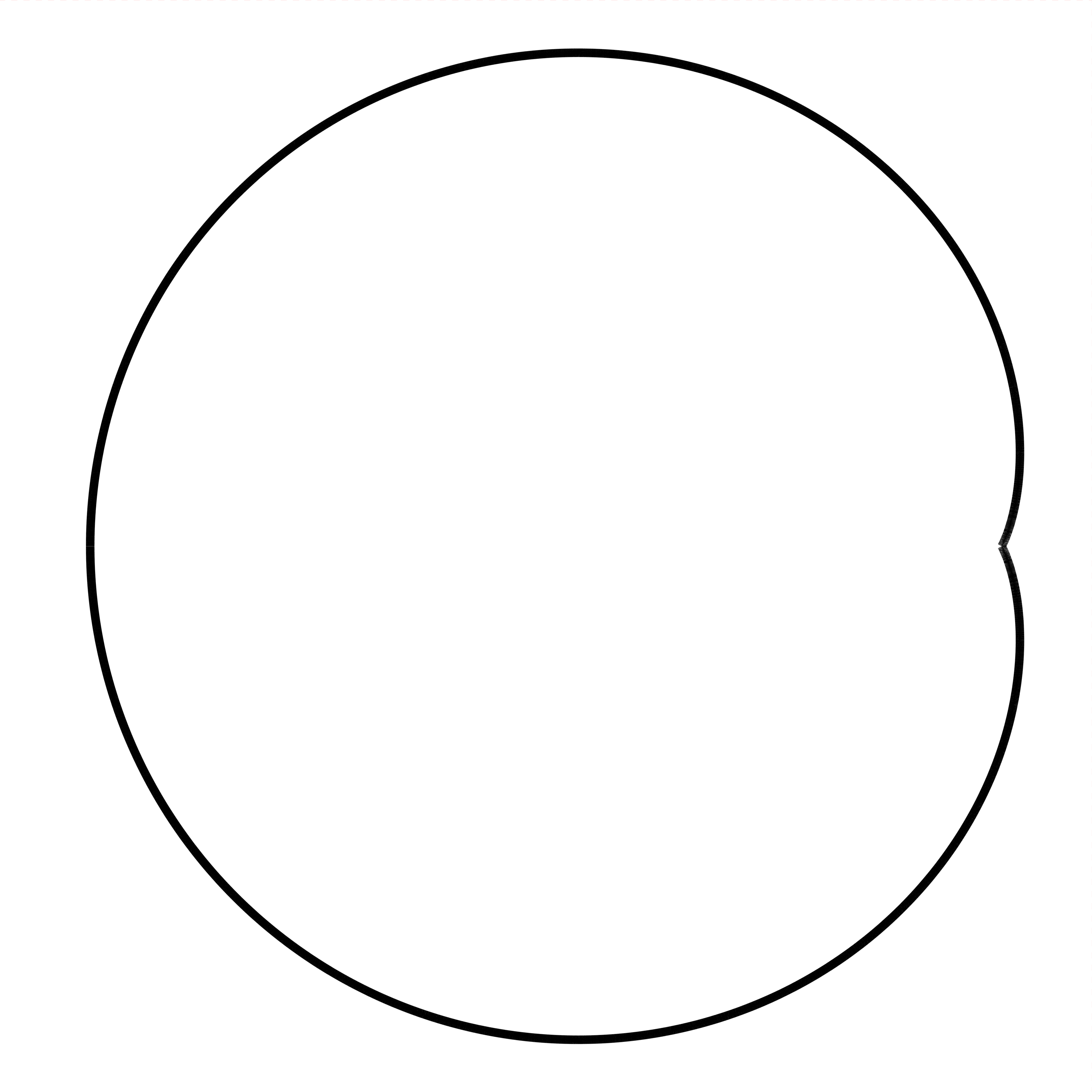}
		\text{$\alpha=2$}
	\end{minipage}
	\hfill
	\begin{minipage}{0.23\textwidth}
		\centering
		\includegraphics[width=\linewidth]{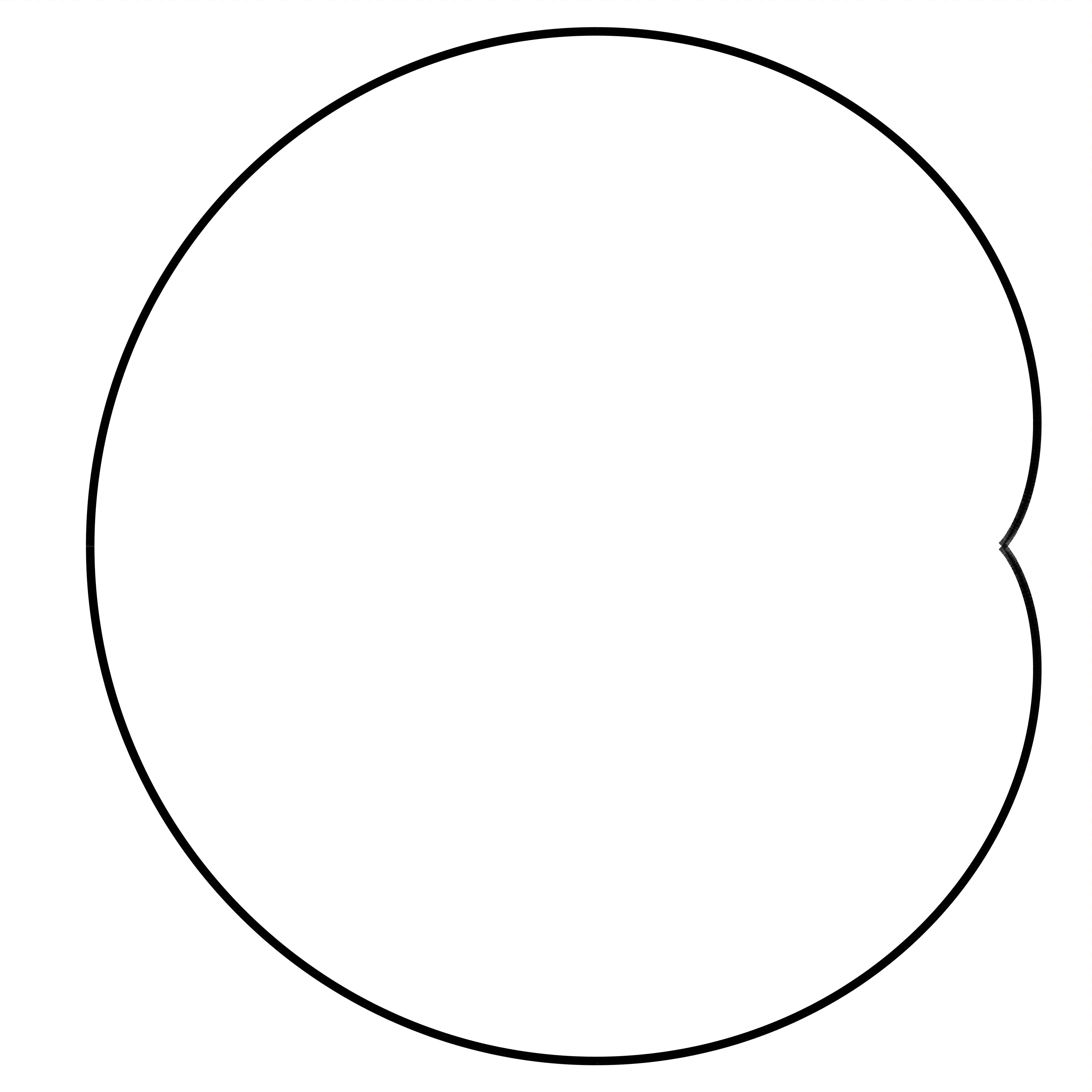}
		\text{$\alpha=4$}
	\end{minipage}
	\caption{The closed contour $C=C_\alpha$ for $\alpha=1/2$, $1$, $2$ and $4$ respectively.}\label{clocon}
\end{figure}

Reconsidering $P_n^{(\alpha,a,b)}(x)$ in the expressions \eqref{Rod-biop} and \eqref{Rod-int-biop}, since the function
\[
\Big(\frac{1-\xi}{2}\Big)^{n+\frac{a+1}{\alpha}-1}\Big[1-\Big(\frac{1-\xi}{2}\Big)^{\frac{1}{\alpha}}\Big]^{n+b}
\]
may have branch points at $\pm1$, the closed contour $C$ in Figure~\ref{clocon}  actually needs to be modified to $C(\tau)$ by a small $\tau$-dependent deformation near the branch points $\pm1$ in the right panel of the following Figure~\ref{clocon-modif} (with $\alpha=2$ as an example). 

\begin{figure}[htbp]
	\centering
	\begin{minipage}{0.32\textwidth}
		\centering
		\includegraphics[width=\linewidth]{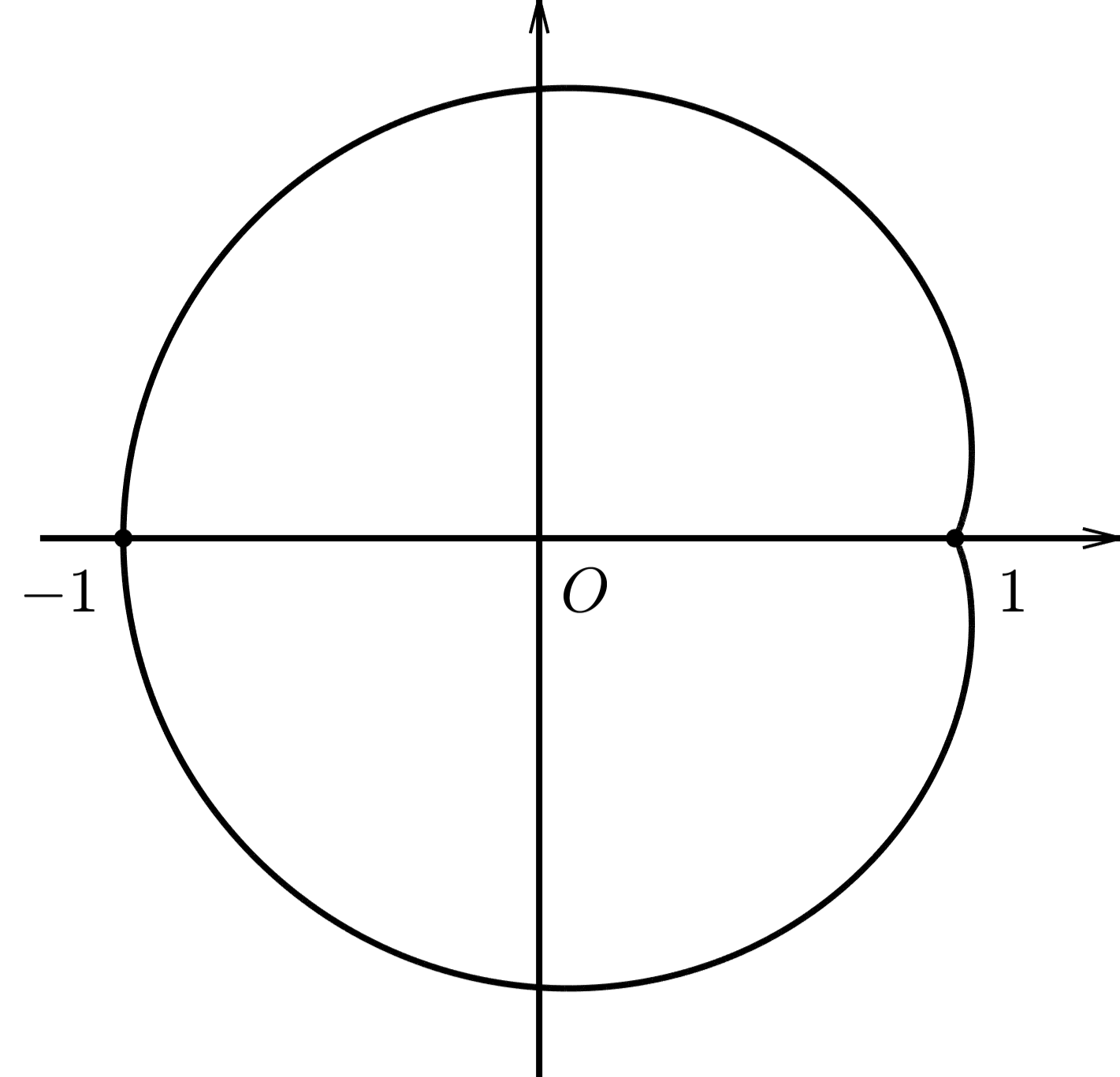}
		\text{The closed contour $C$.}
	\end{minipage}
	\begin{minipage}{0.175\textwidth}
		\centering
		\text{$\xrightarrow{\text{\,\,\,\,modify\,\,\,\,}}$}
		
		\text{}
	\end{minipage}
	\begin{minipage}{0.32\textwidth}
		\centering
		\includegraphics[width=\linewidth]{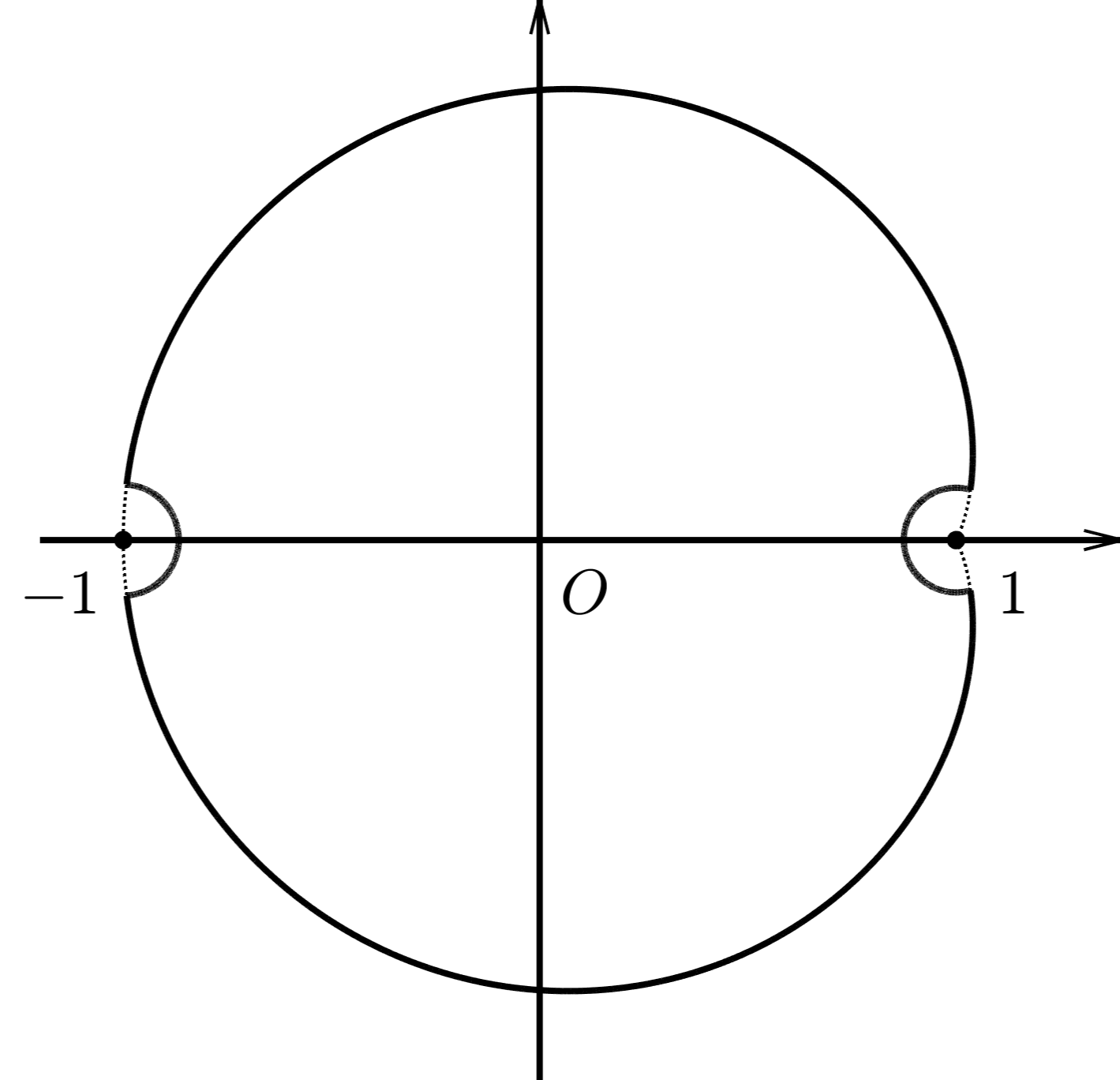}
		\text{The modified closed contour $C(\tau)$.}
	\end{minipage}
	\caption{The modification of $C$ into $C(\tau)$.}
	\label{clocon-modif}
\end{figure}

Note that the closed contour $C$ derived from \eqref{parame-xiphi} passes through the points $\pm1$, whereas the modified closed contour $C(\tau)$ bypasses them. Consequently, by Cauchy's integral formula, the formula that can be directly applied to \eqref{Rod-biop} is the contour integral along $C(\tau)$:
\[
P_n^{(\alpha,a,b)}(x)=\frac{1}{2\pi i}\oint_{C(\tau)}\cdots\,\,\mathrm{d}\xi,
\]
where the integrand is the same as that in formula \eqref{Rod-int-biop}.

However, we can prove that when $n\geq\max\{1-\frac{a+1}{\alpha},-b\}$, the following limit holds:
\begin{align}\label{cancl-modifi}
	\lim_{\tau\to0^+}\frac{1}{2\pi i}\oint_{C(\tau)}\cdots\,\,\mathrm{d}\xi\,=\,\frac{1}{2\pi i}\oint_{C}\cdots\,\,\mathrm{d}\xi.
\end{align}
Based on this, we may take the contour in \eqref{Rod-int-biop} and \eqref{Rod-int-biop-tha} as the original closed contour $C$ (left panel of Figure~\ref{clocon-modif}) constructed from \eqref{parame-xiphi}.

\begin{proof}[Proof of \eqref{cancl-modifi}]
	By the Rodrigues formula \eqref{Rod-biop}, Cauchy's integral formula gives
	\[
	P_n^{(\alpha,a,b)}(x)=(-1)^n2^n\Big(\frac{1-x}{2}\Big)^{\alpha-a-1}\Big(\frac{1+x}{2}\Big)^{-b}\frac{1}{2\pi i}\oint_{C(\tau)}\frac{(\frac{1-\xi}{2})^{n+\frac{a+1}{\alpha}-1}[1-(\frac{1-\xi}{2})^{\frac{1}{\alpha}}]^{n+b}}{\{\xi-[1-2(\frac{1-x}{2})^{\alpha}]\}^{n+1}}\mathrm{d}\xi.
	\]
	Let $n\geq\max\{1-\frac{a+1}{\alpha},-b\}$ be fixed. Then it is easy to verify that the function
	\[
	\Big(\frac{1-\xi}{2}\Big)^{n+\frac{a+1}{\alpha}-1}\Big[1-\Big(\frac{1-\xi}{2}\Big)^{\frac{1}{\alpha}}\Big]^{n+b}
	\]
	is bounded when $\xi$ is sufficiently close to $\pm1$. Moreover, for any fixed $x\in(-1,1)$, the function
	\[
	\Big\{\xi-\Big[1-2\Big(\frac{1-x}{2}\Big)^{\alpha}\Big]\Big\}^{n+1}
	\]
	has a positive lower bound when $\xi$ is sufficiently close to $\pm1$. Hence by noticing that
	\[
	\lim_{\tau\to0^+}\mathrm{length}\big(C(\tau)-C\big)=0,
	\]
	we obtain
	\[
	\lim_{\tau\to0^+}\oint_{C(\tau)}\frac{(\frac{1-\xi}{2})^{n+\frac{a+1}{\alpha}-1}[1-(\frac{1-\xi}{2})^{\frac{1}{\alpha}}]^{n+b}}{\{\xi-[1-2(\frac{1-x}{2})^{\alpha}]\}^{n+1}}\mathrm{d}\xi\,=\,\oint_{C}\frac{(\frac{1-\xi}{2})^{n+\frac{a+1}{\alpha}-1}[1-(\frac{1-\xi}{2})^{\frac{1}{\alpha}}]^{n+b}}{\{\xi-[1-2(\frac{1-x}{2})^{\alpha}]\}^{n+1}}\mathrm{d}\xi.
	\]
	Therefore, we can take the contour in \eqref{Rod-int-biop} and \eqref{Rod-int-biop-tha} as the original closed contour $C$.
\end{proof}

\subsection{Integral along the curve $C_+$}
Let $C_+$ denote the part of the closed contour $C$ in the upper half-plane oriented in the positive direction. That is, the curve $C_+$ is determined by the parametrization
\begin{align}\label{parame-xiphi-up}
	\xi(\varphi)=1-2\Big[\frac{\sin\varphi}{(1+\frac{1}{\alpha})\sin\frac{\pi-\varphi}{1+\frac{1}{\alpha}}}\Big]^{\alpha}\exp\Big(-i\frac{\pi-\varphi}{1+\frac{1}{\alpha}}\Big)=1-2\big[\Theta_{\frac{1}{\alpha}}(\varphi)\big]^{\alpha}\exp\Big(-i\frac{\pi-\varphi}{1+\frac{1}{\alpha}}\Big)
\end{align}
with $0\leq\varphi<\pi$. Here we use the notation \eqref{def-Tha}:
\begin{align}\label{def-Tha-phi}
	\Theta_{\frac{1}{\alpha}}(\varphi):=\frac{\sin\varphi}{(1+\frac{1}{\alpha})\sin\frac{\pi-\varphi}{1+\frac{1}{\alpha}}}.
\end{align}

\begin{lemma}\label{cla-xi'}
	Fix $\alpha>0$. Then $\xi'(\varphi)\neq0$ for all $\varphi\in(0,\pi)$.
\end{lemma}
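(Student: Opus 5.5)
Write $\beta=\frac{1}{1+\frac{1}{\alpha}}=\frac{\alpha}{1+\alpha}\in(0,1)$, $\psi(\varphi)=\beta(\pi-\varphi)\in(0,\beta\pi)$, and $R(\varphi)=[\Theta_{\frac1\alpha}(\varphi)]^{\alpha}>0$ on $(0,\pi)$. Then \eqref{parame-xiphi-up} reads $\xi(\varphi)=1-2R(\varphi)e^{-i\psi(\varphi)}$. Differentiating in polar form, with $\psi'=-\beta$, gives
\[
\xi'(\varphi)=-2e^{-i\psi(\varphi)}\big(R'(\varphi)+i\beta R(\varphi)\big).
\]
The prefactor $-2e^{-i\psi}$ never vanishes. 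So it suffices to show that the complex number $R'(\varphi)+i\beta R(\varphi)$ is nonzero.

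Its imaginary part is $\beta R(\varphi)=\beta[\Theta_{\frac1\alpha}(\varphi)]^{\alpha}$. This is strictly positive for $\varphi\in(0,\pi)$, because $\sin\varphi>0$ and $\sin\frac{\pi-\varphi}{1+\frac1\alpha}>0$ there (the angle $\frac{\pi-\varphi}{1+\frac1\alpha}$ lies in $(0,\pi)$). Hence $\xi'(\varphi)\neq0$, and we do not need any information about the sign of $R'$. The argument works for every $\alpha>0$, which is consistent with the lemma assuming only $\alpha>0$. Alternatively, one could quote the monotonicity from the proof of Lemma~\ref{lem-xtha} (with $\alpha$ replaced by $1/\alpha$) to get $R'>0$, but that is unnecessary.

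There is essentially no obstacle here. The only points needing care are checking positivity of the denominator $\sin\frac{\pi-\varphi}{1+\frac1\alpha}$ on the open interval, and noting that $R$ is differentiable there (a positive smooth function raised to the power $\alpha$), so the polar differentiation is legitimate. At the endpoints $\varphi=0$ we have $R=0$, and near $\varphi=\pi$ the value $\xi$ approaches $-1$. These are exactly why the lemma is stated only on the open interval.
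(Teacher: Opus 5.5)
Your proof is correct and is essentially the paper's argument. Both write $\xi'(\varphi)$ as a nonvanishing factor times $e^{-i\frac{\pi-\varphi}{1+\frac{1}{\alpha}}}$ times a complex number with strictly positive imaginary part. The paper pulls out $[\Theta_{\frac{1}{\alpha}}(\varphi)]^{\alpha-1}$ and uses $(1+\alpha)\Theta_{\frac{1}{\alpha}}'(\varphi)+i\,\Theta_{\frac{1}{\alpha}}(\varphi)$ together with $\Theta_{\frac{1}{\alpha}}(\varphi)\in(0,1)$, which is equivalent to your $R'(\varphi)+i\beta R(\varphi)$ with $R>0$.
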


\begin{proof}
	By the expression of $\xi(\varphi)$ in \eqref{parame-xiphi-up}, one can show that
	\[
	\xi'(\varphi)=-\frac{2\alpha}{1+\alpha}\big[\Theta_{\frac{1}{\alpha}}(\varphi)\big]^{\alpha-1}\exp\Big(-i\frac{\pi-\varphi}{1+\frac{1}{\alpha}}\Big)\Big[(1+\alpha)\Theta_{\frac{1}{\alpha}}'(\varphi)+i\,\Theta_{\frac{1}{\alpha}}(\varphi)\Big].
	\]
	It follows from the proof of Lemma~\ref{lem-xtha} that $\Theta_{\frac{1}{\alpha}}(\varphi)\in(0,1)$ for all $\varphi\in(0,\pi)$. Hence $\xi'(\varphi)\neq0$ for all $\varphi\in(0,\pi)$.
\end{proof}

By symmetry of the closed contour $C$ and formula \eqref{Rod-int-biop-tha}, we have the following expression:
\begin{align}\label{Rod-int-biop-tha-Re}
	\begin{split}
		P_n^{(\alpha,a,b)}\big(x(\theta)\big)&=\mathrm{Re}\Bigg\{\frac{1}{\pi i}\int_{C_+}\Big\{\frac{(\xi-1)[1-(\frac{1-\xi}{2})^{\frac{1}{\alpha}}]}{\xi-[1-2(\frac{1-x(\theta)}{2})^{\alpha}]}\Big\}^n\Big(\frac{1-\xi}{2}\Big)^{\frac{a+1}{\alpha}-1}\Big(\frac{1-x(\theta)}{2}\Big)^{\alpha-a-1}\\
		&\qquad\qquad\qquad\times\Big[1-\Big(\frac{1-\xi}{2}\Big)^{\frac{1}{\alpha}}\Big]^b\Big(\frac{1+x(\theta)}{2}\Big)^{-b}\frac{\mathrm{d}\xi}{\xi-[1-2(\frac{1-x(\theta)}{2})^{\alpha}]}\Bigg\}.
	\end{split}
\end{align}
Let us define $t(\theta)=t_{\alpha}(\theta)$ as follows:
\begin{align}\label{def-ttha}
	t(\theta):=1-2\big[\Theta_{\frac{1}{\alpha}}(\theta)\big]^{\alpha}\,\Theta_{\alpha}(\theta)=1-2\Big[\frac{\sin\theta}{(1+\frac{1}{\alpha})\sin\frac{\pi-\theta}{1+\frac{1}{\alpha}}}\Big]^{\alpha}\,\Big[\frac{\sin\theta}{(1+\alpha)\sin\frac{\pi-\theta}{1+\alpha}}\Big].
\end{align}
Then, by \eqref{def-xtha} and \eqref{def-ttha}, it is easy to verify that
\[
\Big(\frac{1-x(\theta)}{2}\Big)^{\alpha}=\frac{1-t(\theta)}{2}\quad\text{and}\quad\frac{1+x(\theta)}{2}=1-\Big(\frac{1-t(\theta)}{2}\Big)^{\frac{1}{\alpha}}.
\]
Hence the expression \eqref{Rod-int-biop-tha-Re} can be written as
\begin{align}\label{Rod-int-biop-tha-Re-ttha}
		P_n^{(\alpha,a,b)}\big(x(\theta)\big)=\mathrm{Re}\Bigg\{\frac{1}{\pi i}\int_{C_+}\Big\{\frac{(\xi-1)[1-(\frac{1-\xi}{2})^{\frac{1}{\alpha}}]}{\xi-t(\theta)}\Big\}^n\Big[\frac{1-\xi}{1-t(\theta)}\Big]^{\frac{a+1}{\alpha}-1}\Big[\frac{1-(\frac{1-\xi}{2})^{\frac{1}{\alpha}}}{1-(\frac{1-t(\theta)}{2})^{\frac{1}{\alpha}}}\Big]^b\frac{\mathrm{d}\xi}{\xi-t(\theta)}\Bigg\}.
\end{align}

For any $\theta\in(0,\pi)$ and $\varphi\in(0,\pi)$, define
\begin{align}\label{def-f}
		f_{\theta}(\varphi):=\log\frac{(\xi(\varphi)-1)[1-(\frac{1-\xi(\varphi)}{2})^{\frac{1}{\alpha}}]}{\xi(\varphi)-t(\theta)}=\log\frac{e^{i(\pi+\varphi)}\cdot[\Theta_{\frac{1}{\alpha}}(\varphi)]^{\alpha}\cdot[e^{i\frac{\pi-\varphi}{1+\alpha}}-\Theta_{\frac{1}{\alpha}}(\varphi)]}{[\Theta_{\frac{1}{\alpha}}(\varphi)]^{\alpha}\exp(-i\frac{\pi-\varphi}{1+\frac{1}{\alpha}})-[\Theta_{\frac{1}{\alpha}}(\theta)]^{\alpha}\,\Theta_{\alpha}(\theta)}
\end{align}
and
\begin{align}\label{def-g}
	\begin{split}
		g_{\theta}(\varphi)&:=\Big[\frac{1-\xi(\varphi)}{1-t(\theta)}\Big]^{\frac{a+1}{\alpha}-1}\Big[\frac{1-(\frac{1-\xi(\varphi)}{2})^{\frac{1}{\alpha}}}{1-(\frac{1-t(\theta)}{2})^{\frac{1}{\alpha}}}\Big]^b\frac{\xi'(\varphi)}{\xi(\varphi)-t(\theta)}\\
		&\,\,=\Bigg[\frac{\Theta_{\frac{1}{\alpha}}(\varphi)}{\Theta_{\frac{1}{\alpha}}(\theta)}\Bigg]^{a+1-\alpha}\times\frac{e^{-i[\pi+\frac{\pi-\varphi}{1+\alpha}(a+b+1-\alpha)]}}{2[\Theta_{\alpha}(\theta)]^{\frac{a+1}{\alpha}-1}}\times\frac{[e^{i\frac{\pi-\varphi}{1+\alpha}}-\Theta_{\frac{1}{\alpha}}(\varphi)]^b}{\{1-\Theta_{\frac{1}{\alpha}}(\theta)[\Theta_{\alpha}(\theta)]^{\frac{1}{\alpha}}\}^b}\\
		&\,\,\quad\times\frac{\xi'(\varphi)}{[\Theta_{\frac{1}{\alpha}}(\varphi)]^{\alpha}\exp(-i\frac{\pi-\varphi}{1+\frac{1}{\alpha}})-[\Theta_{\frac{1}{\alpha}}(\theta)]^{\alpha}\,\Theta_{\alpha}(\theta)}.
	\end{split}
\end{align}
Here the second equality in \eqref{def-f} and that in \eqref{def-g} are obtained by simple calculations using \eqref{parame-xiphi-up} and \eqref{def-ttha}. Since the curve $C_+$ is determined by the parametrization $\xi(\varphi)$ for $\varphi\in[0,\pi)$, we can write the expression \eqref{Rod-int-biop-tha-Re-ttha} as follows:
\begin{align}\label{Exp-Stedes}
	P_n^{(\alpha,a,b)}\big(x(\theta)\big)=\mathrm{Re}\Big\{\frac{1}{\pi i}\int_{0}^{\pi}e^{nf_{\theta}(\varphi)}g_{\theta}(\varphi)\mathrm{d}\varphi\Big\}.
\end{align}

\subsection{Conditions of steepest descent}
Based on the expression \eqref{Exp-Stedes}, we now discuss the conditions required for the method of steepest descent.

The first step in the method of steepest descent is to find the saddle point of the function $f_{\theta}(\varphi)$ for the asymptotic expansion, which is given in the following Lemma~\ref{lem-saddlepoint}.

\begin{lemma}\label{lem-saddlepoint}
	For any fixed $\alpha>0$ and $\theta\in(0,\pi)$, the equation $f_{\theta}'(\varphi)=0$ has a unique solution $\varphi=\theta$ in $(0,\pi)$.
\end{lemma}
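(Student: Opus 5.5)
We prove Lemma~\ref{lem-saddlepoint} by removing the parametrization with the chain rule and then solving the resulting algebraic saddle-point equation in a convenient variable. By \eqref{def-f}, $f_\theta(\varphi)=\log F_\theta(\xi(\varphi))$ with
\[
F_\theta(\xi)=\frac{(\xi-1)\,[1-(\frac{1-\xi}{2})^{\frac1\alpha}]}{\xi-t(\theta)},
\]
so that $f_\theta'(\varphi)=\frac{F_\theta'}{F_\theta}(\xi(\varphi))\,\xi'(\varphi)$. By Lemma~\ref{cla-xi'}, $\xi'(\varphi)\neq0$ on $(0,\pi)$. Hence it suffices to show that $\frac{\mathrm d}{\mathrm d\xi}\log F_\theta$ vanishes at $\xi=\xi(\varphi)$, $\varphi\in(0,\pi)$, only for $\varphi=\theta$. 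First we check from \eqref{parame-xiphi-up} and \eqref{def-ttha} that $F_\theta$ has no zeros or poles on $C_+$ away from the endpoints. The only possible pole would be $\xi(\varphi)=t(\theta)$, and this is excluded for $\varphi\in(0,\pi)$ because there $\xi(\varphi)$ is non-real while $t(\theta)$ is real. So the logarithmic derivative is well defined.

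Next we pass to the variables
\[
u=\frac{1-\xi}{2},\qquad w=u^{\frac1\alpha},\qquad u_0=\frac{1-t(\theta)}{2}=\big[\Theta_{\frac1\alpha}(\theta)\big]^{\alpha}\Theta_\alpha(\theta).
\]
Up to additive constants, $\log F_\theta=\log u+\log(1-w)-\log(u_0-u)$. Differentiating in $u$ and using $\frac{\mathrm d w}{\mathrm d u}=\frac{w}{\alpha u}$, the equation $\frac{\mathrm d}{\mathrm d u}\log F_\theta=0$ becomes
\[
\frac{1}{u}+\frac{1}{u_0-u}=\frac{w}{\alpha u(1-w)},\quad\text{i.e.}\quad \frac{u_0}{u_0-u}=\frac{w}{\alpha(1-w)}.
\]
Using $u=w^\alpha$ and clearing denominators, this simplifies to
\[
u_0=h(w):=\frac{w^{1+\alpha}}{(1+\alpha)w-\alpha}.
\]
So the critical points are exactly the points of $C_+$ at which $h(w)=u_0$.

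On $C_+$, \eqref{parame-xiphi-up} gives, with the principal branch,
\[
w(\varphi)=\Theta_{\frac1\alpha}(\varphi)\,e^{-i\frac{\pi-\varphi}{1+\alpha}}.
\]
The key step is the identity
\[
h(w(\varphi))=\big[\Theta_{\frac1\alpha}(\varphi)\big]^{\alpha}\,\Theta_\alpha(\varphi)\qquad\text{for }\varphi\in(0,\pi).
\]
This says that $C_+$ was built as the curve on which $h$ is real and positive. To prove it, note that $w^{1+\alpha}=\Theta_{\frac1\alpha}(\varphi)^{1+\alpha}e^{-i(\pi-\varphi)}$. It remains to show that $(1+\alpha)w-\alpha$ has argument $-(\pi-\varphi)$ and modulus $\Theta_{\frac1\alpha}(\varphi)/\Theta_\alpha(\varphi)$. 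Both follow from the law of sines in the triangle with vertices $0$, $\alpha$ and $(1+\alpha)w$. In that triangle, the angle at $0$ is $\frac{\pi-\varphi}{1+\alpha}$, the angle at $\alpha$ is $\varphi$, and the remaining angle is $\frac{\alpha(\pi-\varphi)}{1+\alpha}$. The side lengths are then read off from the definitions \eqref{def-Tha} of $\Theta_\alpha$ and $\Theta_{\frac1\alpha}$. This trigonometric verification, including careful bookkeeping of branches so that the arguments match exactly rather than modulo $2\pi$, is the step I expect to be the main obstacle.

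Granting the identity, the saddle-point equation on $C_+$ reduces to the real equation
\[
\big[\Theta_{\frac1\alpha}(\varphi)\big]^{\alpha}\Theta_\alpha(\varphi)=\big[\Theta_{\frac1\alpha}(\theta)\big]^{\alpha}\Theta_\alpha(\theta).
\]
By the proof of Lemma~\ref{lem-xtha}, $\Theta_\alpha$ and $\Theta_{\frac1\alpha}$ are positive and strictly increasing on $(0,\pi)$, so the left-hand side is strictly increasing in $\varphi$. Hence this equation has the unique solution $\varphi=\theta$, which proves the lemma.
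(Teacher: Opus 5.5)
Your argument is correct and is essentially the paper's proof. Your cleared equation $u_0[(1+\alpha)w-\alpha]=w^{1+\alpha}$ is, up to a factor $2$, the paper's condition $\mathrm{Upp}_{\theta}(\xi(\varphi))=0$. Your key identity $(1+\alpha)w(\varphi)-\alpha=\frac{\Theta_{\frac{1}{\alpha}}(\varphi)}{\Theta_{\alpha}(\varphi)}e^{-i(\pi-\varphi)}$ is exactly \eqref{cal-2}; the paper verifies it via $\sin\varphi=\sin\big(\frac{\pi-\varphi}{1+\alpha}+\frac{\pi-\varphi}{1+\frac{1}{\alpha}}\big)$ rather than the law of sines, and both proofs finish with the strict monotonicity of $[\Theta_{\frac{1}{\alpha}}]^{\alpha}\Theta_{\alpha}$. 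The branch bookkeeping you worry about is harmless: $w^{1+\alpha}$ arises as the product $w\cdot u$, and you only need equality of complex numbers, i.e. arguments modulo $2\pi$.
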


The second key of the method of steepest descent is to investigate the monotonicity of the modulus $|e^{f_{\theta}(\varphi)}|$ near the saddle point $\varphi=\theta$. For this purpose, we define the function 
\begin{align}\label{def-T}
	T_{\theta}(\varphi):=\big|e^{f_{\theta}(\varphi)}\big|^2=\frac{[\Theta_{\frac{1}{\alpha}}(\varphi)]^{2\alpha}\cdot\{1+[\Theta_{\frac{1}{\alpha}}(\varphi)]^2-2\Theta_{\frac{1}{\alpha}}(\varphi)\cos\frac{\pi-\varphi}{1+\alpha}\}}{[\Theta_{\frac{1}{\alpha}}(\varphi)]^{2\alpha}-2[\Theta_{\frac{1}{\alpha}}(\theta)]^{\alpha}\,\Theta_{\alpha}(\theta)[\Theta_{\frac{1}{\alpha}}(\varphi)]^{\alpha}\cos\frac{\pi-\varphi}{1+\frac{1}{\alpha}}+[\Theta_{\frac{1}{\alpha}}(\theta)]^{2\alpha}\,[\Theta_{\alpha}(\theta)]^2}.
\end{align}
Here the second equality in \eqref{def-T} follows from \eqref{def-f}. The following Lemma~\ref{lem-mono-T} describes the monotonicity of $T_{\theta}(\varphi)$ in $(0,\pi)$.

\begin{lemma}\label{lem-mono-T}
	For any fixed $\alpha\geq1$ and $\theta\in(0,\pi)$, the function $T_{\theta}(\varphi)$ is strictly increasing on $(0,\theta)$ and strictly decreasing on $(\theta,\pi)$.
\end{lemma}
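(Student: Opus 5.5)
The plan is to reduce the monotonicity of $T_\theta(\varphi)$ to a sign statement about its logarithmic derivative. The sign of $\log T_\theta$'s derivative is that of $\mathrm{Re}\,f_\theta'(\varphi)$, because
\[
\frac{T_\theta'(\varphi)}{T_\theta(\varphi)}=2\,\mathrm{Re}\,f_\theta'(\varphi).
\]
By Lemma~\ref{lem-saddlepoint}, $f_\theta'$ vanishes in $(0,\pi)$ only at $\varphi=\theta$. Hence $T_\theta'$ vanishes at $\theta$, but it may also vanish at other points where only the real part of $f_\theta'$ is zero. The monotonicity claim therefore needs a separate argument. Namely, I would show that $\mathrm{Re}\,f_\theta'(\varphi)$ has the sign of $\theta-\varphi$.

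\textbf{Step 1: a better variable.} I would rewrite $T_\theta$ using $u=\Theta_{\frac1\alpha}(\varphi)\in(0,1)$. By the proof of Lemma~\ref{lem-xtha}, $\varphi\mapsto u$ is a strictly increasing bijection, so monotonicity in $\varphi$ is equivalent to monotonicity in $u$. I write $\psi=\frac{\pi-\varphi}{1+\alpha}$, so that $\frac{\pi-\varphi}{1+1/\alpha}=\alpha\psi$, and set $c=[\Theta_{\frac1\alpha}(\theta)]^\alpha\Theta_\alpha(\theta)=\frac{1-t(\theta)}{2}$. Then
\[
T_\theta=\frac{u^{2\alpha}|e^{i\psi}-u|^2}{|u^\alpha e^{-i\alpha\psi}-c|^2}.
\]
By the definition \eqref{def-Tha}, $u$ and $\psi$ are tied by $\sin\varphi=(1+\tfrac1\alpha)u\sin(\alpha\psi)$ with $\varphi=\pi-(1+\alpha)\psi$. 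This gives
\[
\sin((1+\alpha)\psi)=(1+\tfrac1\alpha)\,u\,\sin(\alpha\psi),
\]
which defines $\psi$ implicitly as a decreasing function of $u$. I would then compute $\frac{d}{d\varphi}\log T_\theta$ using the derivatives $\Theta'_{\frac1\alpha}$ from the proof of Lemma~\ref{lem-xtha}, that is, the $\mathrm U_{1/\alpha}$ expression.

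\textbf{Step 2: factor out the saddle.} After clearing the positive denominators, $T_\theta'(\varphi)$ should be a positive factor times an expression $D_\theta(\varphi)$. Two facts are available about $D_\theta$. First, it vanishes at $\varphi=\theta$, because $f_\theta'(\theta)=0$. Second, its dependence on $\theta$ is only through $c$, and it is linear or quadratic in $c$, since the denominator is a quadratic polynomial in $c$. The aim is to show that $D_\theta(\varphi)$ has the sign of $\theta-\varphi$, and I would look at $D$ as a function of $c$ with $\varphi$ fixed. Let $c^*(\varphi)$ denote the value of $c$ making the saddle sit at $\varphi$, namely $c^*(\varphi)=[\Theta_{\frac1\alpha}(\varphi)]^\alpha\Theta_\alpha(\varphi)$, which is increasing in $\varphi$ by Lemma~\ref{lem-xtha}. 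Then $D$ vanishes at $c=c^*(\varphi)$. If I can show that $c\mapsto D$ is strictly monotone, or that its other root lies outside the admissible range $(0,1)$, the sign follows. Since $c^*$ is increasing, $c>c^*(\varphi)$ is equivalent to $\theta>\varphi$, which would give increase on $(0,\theta)$ and decrease on $(\theta,\pi)$.

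\textbf{Main obstacle.} The hard part is the auxiliary sign inequality: showing that the coefficient of $c$ in $D$, or the location of its second root, has a fixed sign for all $\varphi\in(0,\pi)$. This is where $\alpha\ge1$ has to enter, and it matches the role of the Claim~\ref{clm-anaT'} the authors anticipate. I expect it to reduce to a trigonometric inequality in $\psi\in(0,\frac{\pi}{1+\alpha})$ involving $\sin(\alpha\psi)$, $\sin((1+\alpha)\psi)$ and $\cos\psi$. I would first try to prove that inequality by a derivative or convexity argument in $\psi$, for instance by showing that an auxiliary function vanishes at the endpoint $\psi=\frac{\pi}{1+\alpha}$ and is monotone. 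The case $\alpha=1$, where everything reduces to $\cos\theta$ and the classical contour, serves as a check.
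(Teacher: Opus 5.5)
\textbf{Gap: the key non-vanishing claim is left unproved.} Your Step 2 is the same reduction the paper uses. Use the paper's notation $u=(kl)'$, $w=k^2l'$ (not your $u=\Theta_{\frac{1}{\alpha}}$), and put $c=s(\theta)$. The numerator of $T_\theta'(\varphi)$ is the quadratic $u(\varphi)c^2+v(\varphi)c+w(\varphi)$. It vanishes at $c=s(\varphi)$ for every $\varphi$, which gives the factorization \eqref{cal-sim-T'}: $T_\theta'(\varphi)\propto[s(\varphi)-s(\theta)]\,[w(\varphi)-u(\varphi)s(\varphi)s(\theta)]$.

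That part is routine. The real content of the lemma, and the only place $\alpha\ge1$ is used, is that the second factor never vanishes for $s(\theta)\in(0,1)$ (Claim~\ref{clm-anaT'}). Your proposal stops exactly there, with an expectation rather than an argument. As it stands you have established nothing beyond $T_\theta'(\theta)=0$.

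This step is also more delicate than a generic derivative or convexity sketch suggests. The second root $h(\varphi)=w(\varphi)/(u(\varphi)s(\varphi))$ tends to $0$ as $\varphi\to0^+$ and to $1$ as $\varphi\to\pi^-$. It therefore approaches both ends of the admissible interval, and you must control from which side. For $0<\alpha<1$ it actually lies in $(0,1)$ near $\varphi=0$. You also have to handle the point where the leading coefficient $u$ vanishes, so that your $D$ degenerates to a linear function of $c$.

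\textbf{What the paper supplies.} The missing tool is the auxiliary function $d(\varphi)=(1+\alpha)\cot\varphi+\alpha\cot\frac{\pi-\varphi}{1+\frac{1}{\alpha}}$. It satisfies $\Theta_{\frac{1}{\alpha}}'=d\,\Theta_{\frac{1}{\alpha}}/(1+\alpha)$ and $d'=-(1+\alpha)(1-\Theta_{\frac{1}{\alpha}}^2)/\sin^2\varphi<0$.
\begin{itemize}
\item In terms of $d$, $u$ is a positive factor times $d^2-1$. So $u$ vanishes only at the unique $\varphi_0$ with $d(\varphi_0)=1$, and there $w(\varphi_0)<0$.
\item Away from $\varphi_0$, $h=\Theta_{\frac{1}{\alpha}}^{\alpha}\big[\cos Z-\frac{2d}{d^2-1}\sin Z\big]$ with $Z=\frac{\pi-\varphi}{1+\frac{1}{\alpha}}$.
\item Showing $h'\le0$ on $(0,\varphi_0)$ and on $(\varphi_0,\pi)$ reduces to $\lambda(\varphi)=\cos Y\sin Z-\alpha\sin Y\cos Z\ge0$, where $Y=\frac{\pi-\varphi}{1+\alpha}$.
\item This holds for $\alpha\ge1$, because $\lambda'=(1-\alpha)\sin Y\sin Z\le0$ and $\lambda$ vanishes at $\varphi=\pi^-$. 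That endpoint is $\psi=0$, not $\psi=\frac{\pi}{1+\alpha}$ as you guessed.
\item Together with $h(0^+)=0$, $h(\varphi_0^{\mp})=\mp\infty$ and $h(\pi^-)=1$, this gives $h\notin(0,1)$.
\end{itemize}

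\textbf{Sign determination.} Even granting that the second factor never vanishes, the equivalence ``$c>c^*(\varphi)$ iff $\theta>\varphi$'' does not tell you which sign $T_\theta'$ takes on each side. The paper fixes this by continuity in $\varphi$ together with $T_\theta(0^+)=T_\theta(\pi^-)=0<T_\theta(\theta)$.
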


\begin{remark}\label{rem-2}
	Lemma~\ref{lem-mono-T} requires $\alpha\geq1$ because its proof will use Claim~\ref{clm-anaT'}, which holds only for $\alpha\geq1$; moreover, Remark~\ref{rem-3} following Claim~\ref{clm-anaT'} shows that it is false for $0<\alpha<1$. Consequently, Theorem~\ref{thm-Darbo-Formu-biop} also requires $\alpha\geq1$.
\end{remark}

We defer the proofs of Lemmas~\ref{lem-saddlepoint} and \ref{lem-mono-T} to Sections~\ref{prf-saddlepoint-lem} and \ref{prf-monoT-lem}, respectively. The following Corollary~\ref{Cor-secderiv} follows from Lemmas~\ref{lem-saddlepoint} and \ref{lem-mono-T}, which shows that $\mathrm{Re}f_{\theta}''(\theta)<0$. This is a necessary step in the computation during the method of steepest descent.

\begin{corollary}\label{Cor-secderiv}
	For any fixed $\alpha\geq1$ and $\theta\in(0,\pi)$, we have
	\[
	\mathrm{Re}f_{\theta}''(\theta)<0.
	\]
\end{corollary}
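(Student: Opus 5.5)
Since $T_{\theta}(\varphi)=|e^{f_{\theta}(\varphi)}|^2=e^{2\,\mathrm{Re}f_{\theta}(\varphi)}$, I would work with $h(\varphi):=\mathrm{Re}f_{\theta}(\varphi)=\tfrac12\log T_{\theta}(\varphi)$. This function is real-analytic on $(0,\pi)$, because $f_{\theta}$ is a composition of analytic functions of $\varphi$ away from zeros of the numerator and denominator. Since $\varphi$ is a real parameter, $h'=\mathrm{Re}f_{\theta}'$ and $h''=\mathrm{Re}f_{\theta}''$. By Lemma~\ref{lem-saddlepoint}, $f_{\theta}'(\theta)=0$, so $h'(\theta)=0$. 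By Lemma~\ref{lem-mono-T}, $h$ attains a strict local maximum at $\varphi=\theta$. It follows that $h''(\theta)\le 0$, i.e. $\mathrm{Re}f_{\theta}''(\theta)\le0$.

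The remaining task is to exclude $\mathrm{Re}f_{\theta}''(\theta)=0$. Monotonicity alone does not do this, since a strict maximum can be degenerate. My plan is to use the local structure of an analytic saddle. Suppose $f_{\theta}''(\theta)\neq0$. Write $f_{\theta}''(\theta)=\rho e^{i\psi}$ with $\rho>0$. Then
\[
h(\varphi)-h(\theta)=\tfrac{\rho}{2}\cos\psi\,(\varphi-\theta)^2+O(|\varphi-\theta|^3).
\]
If $\cos\psi=0$, then $f_{\theta}''(\theta)$ is purely imaginary. In that case I would return to the explicit computation of $T_{\theta}$ in \eqref{def-T}. Alternatively, I would use the identity $\mathrm{Re}f_{\theta}''(\theta)=-\mathrm{Im}\big(\text{second derivative in the complex }\xi\text{-variable}\big)\times(\ldots)$, obtained by viewing $f_{\theta}$ as $F(\xi(\varphi))$ with $F$ holomorphic and $F'(\xi(\theta))=0$ (here Lemma~\ref{cla-xi'} guarantees $\xi'(\theta)\neq0$). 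This gives
\[
f_{\theta}''(\theta)=F''(\xi(\theta))\,\xi'(\theta)^2 .
\]
Hence $\mathrm{Re}f_{\theta}''(\theta)=0$ exactly when the contour's tangent direction at the saddle is a level direction of $\mathrm{Re}F$, rather than a steepest-descent direction.

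The main obstacle is this nondegeneracy. I would prove it by computing $\mathrm{Re}f_{\theta}''(\theta)=\tfrac12 (\log T_{\theta})''(\theta)=\tfrac{T_{\theta}''(\theta)}{2T_{\theta}(\theta)}$, using $T_{\theta}'(\theta)=0$, directly from \eqref{def-T}. The first step is to reuse the factorization of $T_{\theta}'(\varphi)$ produced in the proof of Lemma~\ref{lem-mono-T}. That argument presumably writes
\[
T_{\theta}'(\varphi)=(\theta-\varphi)\cdot(\text{explicit factor})\quad\text{or}\quad T_{\theta}'(\varphi)=(\Theta(\theta)-\Theta(\varphi))\cdot Q(\varphi),
\]
with $Q>0$; this is the content of Claim~\ref{clm-anaT'} for $\alpha\ge1$. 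Differentiating at $\varphi=\theta$ then gives
\[
T_{\theta}''(\theta)=-\Theta'(\theta)\,Q(\theta)<0,
\]
because $\Theta_{\frac1\alpha}'>0$ by the proof of Lemma~\ref{lem-xtha}. This yields strict negativity.

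If the factorization only gives $Q\ge0$ with possible zeros, the fallback is to check $Q(\theta)>0$ directly. I would also check it in the case $\alpha=1$, where the expected value of $f_{\theta}''(\theta)$ is explicit, to confirm the sign.
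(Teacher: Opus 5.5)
Your proposal is correct. It starts from the same identity the paper uses, $T_\theta''(\theta)=2e^{2\mathrm{Re}f_\theta(\theta)}\,\mathrm{Re}f_\theta''(\theta)$, obtained from $T_\theta=e^{2\mathrm{Re}f_\theta}$ and $\mathrm{Re}f_\theta'(\theta)=0$. At the decisive step you are more careful than the paper. The paper simply asserts that, because $\theta$ is a maximum point of $T_\theta$ by Lemma~\ref{lem-mono-T}, one has $T_\theta''(\theta)<0$. You correctly point out that this inference only gives $\le 0$, since a strict maximum can be degenerate.

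Your repair is available from material already in the paper's proof of Lemma~\ref{lem-mono-T}, with one correction: the monotone factor in \eqref{cal-sim-T'} is $s=[\Theta_{\frac{1}{\alpha}}]^{\alpha}\Theta_{\alpha}$, not $\Theta_{\frac{1}{\alpha}}$. Differentiating \eqref{cal-sim-T'} at $\varphi=\theta$ gives
\[
T_\theta''(\theta)=s'(\theta)\,\frac{w(\theta)-u(\theta)s^2(\theta)}{s(\theta)\,\big[k(\theta)+r(\theta)s(\theta)+s^2(\theta)\big]^2}.
\]
Each factor is controlled:
\begin{itemize}
\item $s'(\theta)>0$, because $\Theta_{\alpha}'$ and $\Theta_{\frac{1}{\alpha}}'$ are both strictly positive by the proof of Lemma~\ref{lem-xtha}.
\item The bracket equals $\frac14|\xi(\theta)-t(\theta)|^2>0$, since $\mathrm{Im}\,\xi(\theta)>0$ and $t(\theta)$ is real.
\item Claim~\ref{clm-anaT'} at $\varphi=\theta$, together with $s(\theta)\in(0,1)$, gives $w(\theta)-u(\theta)s^2(\theta)\neq0$.
\end{itemize}

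The Claim only gives nonvanishing of this factor, not the positivity you attribute to $Q$. Nonvanishing is enough, for either of two reasons:
\begin{itemize}
\item combined with your $T_\theta''(\theta)\le 0$ it gives $T_\theta''(\theta)<0$;
\item alternatively, the factor is continuous and nonvanishing, hence of constant sign, and its sign is fixed on $(0,\theta)$ where $T_\theta'>0$.
\end{itemize}
Either way $\mathrm{Re}f_\theta''(\theta)<0$ follows.

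The intermediate discussion of the case $\cos\psi=0$ and of level directions of $\mathrm{Re}F$ does not contribute to the argument and can be dropped.
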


\begin{proof}
	It follows from the definition of $T_{\theta}(\varphi)$ in \eqref{def-T} that
	\begin{align}\label{exp-TT'}
		T_{\theta}(\varphi)=e^{2\mathrm{Re}f_{\theta}(\varphi)}\quad\text{and}\quad T_{\theta}'(\varphi)=2e^{2\mathrm{Re}f_{\theta}(\varphi)}\mathrm{Re}f_{\theta}'(\varphi).
	\end{align}
	Then, one can show that
	\[
	T_{\theta}''(\varphi)=2e^{2\mathrm{Re}f_{\theta}(\varphi)}\big\{2[\mathrm{Re}f_{\theta}'(\varphi)]^2+\mathrm{Re}f_{\theta}''(\varphi)\big\}.
	\]
	Noticing that $\mathrm{Re}f_{\theta}'(\theta)=0$ by Lemma~\ref{lem-saddlepoint}, and setting $\varphi=\theta$, we have
	\[
	T_{\theta}''(\theta)=2e^{2\mathrm{Re}f_{\theta}(\theta)}\mathrm{Re}f_{\theta}''(\theta).
	\]
	From Lemma~\ref{lem-mono-T}, we know that $\varphi=\theta$ is a maximum point of $T_{\theta}(\varphi)$ and hence $T_{\theta}''(\theta)<0$, which gives $\mathrm{Re}f_{\theta}''(\theta)<0$.
\end{proof}

\subsection{Partition of the curve $C_+$}\label{Sec-del}
Fix $\delta\in(0,1/6)$, and why $\delta$ cannot exceed $1/6$ will become clear in the proofs of Lemma~\ref{est-int-cen} and \ref{est-int-r+l}. Then, for sufficiently large $n$, we divide the interval $(0,\pi)$ into three parts: a neighborhood of the saddle point $\varphi=\theta$, and the two parts away from the saddle point (one on each side). Specifically,
\begin{align}\label{div-int}
	(0,\pi)=(0,\theta-n^{-\frac{1}{2}+\delta}]\cup(\theta-n^{-\frac{1}{2}+\delta},\theta+n^{-\frac{1}{2}+\delta})\cup[\theta+n^{-\frac{1}{2}+\delta},\pi).
\end{align}
The following Figure~\ref{div-cur} shows the corresponding partition of the curve $C_+$ under the partition \eqref{div-int} (here we take $\alpha=2$, $\theta=\pi/3$, $\delta=1/12$ and $n=100$ as an illustration).

\begin{figure}[htbp]
	\centering
	\includegraphics[width=0.6\linewidth]{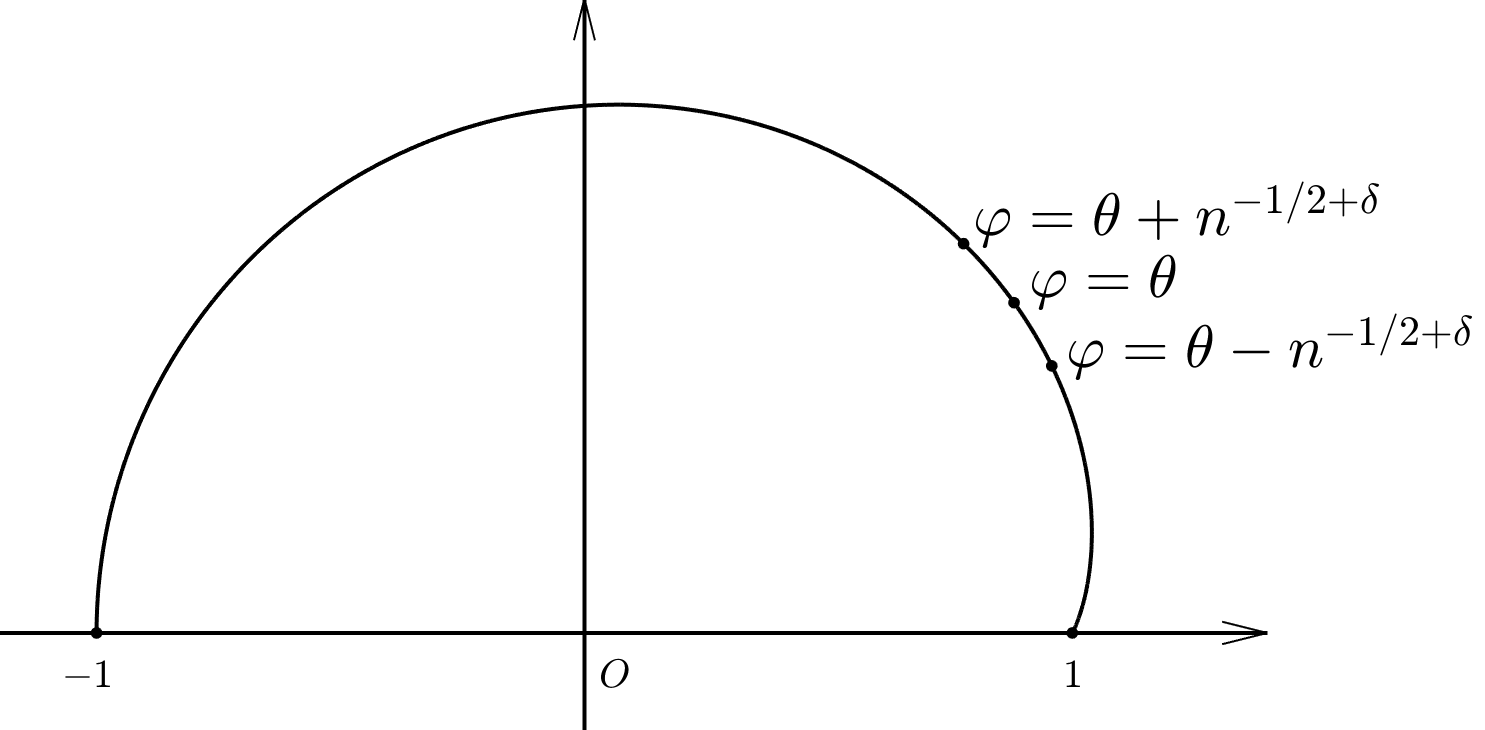}
	\caption{The partition of the curve $C_+$ induced by \eqref{div-int}.}
	\label{div-cur}
\end{figure}

We now consider the following three integrals:
\begin{align}\label{def-int-cen}
\mathcal{I}_{\mathrm{center}}(\theta,n):=\frac{1}{\pi i}\int_{\theta-n^{-\frac{1}{2}+\delta}}^{\theta+n^{-\frac{1}{2}+\delta}}e^{nf_{\theta}(\varphi)}g_{\theta}(\varphi)\mathrm{d}\varphi,
\end{align}
\begin{align}\label{def-int-r+l}
\mathcal{I}_{\mathrm{left}}(\theta,n):=\frac{1}{\pi i}\int_{0}^{\theta-n^{-\frac{1}{2}+\delta}}e^{nf_{\theta}(\varphi)}g_{\theta}(\varphi)\mathrm{d}\varphi,\quad\mathcal{I}_{\mathrm{right}}(\theta,n):=\frac{1}{\pi i}\int_{\theta+n^{-\frac{1}{2}+\delta}}^{\pi}e^{nf_{\theta}(\varphi)}g_{\theta}(\varphi)\mathrm{d}\varphi.
\end{align}
Therefore, by \eqref{Exp-Stedes}, we get
\begin{align}\label{Exp-Stedes-div}
	P_n^{(\alpha,a,b)}\big(x(\theta)\big)=\mathrm{Re}\Big\{\mathcal{I}_{\mathrm{center}}(\theta,n)+\mathcal{I}_{\mathrm{left}}(\theta,n)+\mathcal{I}_{\mathrm{right}}(\theta,n)\Big\}.
\end{align}
We will estimate these three integrals according to the conditions for steepest descent given above, namely Lemma~\ref{lem-saddlepoint}, Lemma~\ref{lem-mono-T} and Corollary~\ref{Cor-secderiv}.

\subsubsection{Estimate of $\mathcal{I}_{\mathrm{center}}(\theta,n)$}
The result for the first integral \eqref{def-int-cen} is as follows:

\begin{lemma}\label{est-int-cen}
	For any fixed $\alpha\geq1$, $a,b>-1$ and $\delta\in(0,1/6)$, as $n\to\infty$, we have
	\[
	\mathcal{I}_{\mathrm{center}}(\theta,n)=\sqrt{2}\pi^{-\frac{1}{2}}n^{-\frac{1}{2}}\frac{g_{\theta}(\theta)}{\sqrt{f_{\theta}''(\theta)}}e^{nf_{\theta}(\theta)}\big[1+O(n^{-1})\big],
	\]
	where the bound of the error term holds uniformly for $\theta\in[\varepsilon,\pi-\varepsilon]$ with any fixed $0<\varepsilon<\frac{\pi}{2}$. 
\end{lemma}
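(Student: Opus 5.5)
We will follow the standard Laplace/steepest-descent computation near the nondegenerate saddle $\varphi=\theta$ given by Lemma~\ref{lem-saddlepoint}. We substitute $\varphi=\theta+s$ with $|s|<n^{-\frac12+\delta}$ and Taylor expand
\[
f_{\theta}(\theta+s)=f_{\theta}(\theta)+\tfrac12 f_{\theta}''(\theta)s^2+\tfrac16 f_{\theta}'''(\theta)s^3+R_4(\theta,s),\qquad g_{\theta}(\theta+s)=g_{\theta}(\theta)+g_{\theta}'(\theta)s+R_2(\theta,s).
\]
Here $|R_4|\le K s^4$ and $|R_2|\le K s^2$, with $K$ uniform in $\theta\in[\varepsilon,\pi-\varepsilon]$ and $s$ small. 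The uniformity will come from the explicit formulas \eqref{def-f}, \eqref{def-g}. For $\theta,\varphi$ in a compact subset of $(0,\pi)$, the quantities $\Theta_{\alpha},\Theta_{\frac1\alpha}$ stay in a compact subset of $(0,1)$. The denominators $\xi(\varphi)-t(\theta)$ stay away from $0$ for $\varphi$ near $\theta$, since $\mathrm{Im}\,\xi(\varphi)>0$ there while $t(\theta)$ is real. Also $\xi'\neq0$ by Lemma~\ref{cla-xi'}. Hence $f_\theta,g_\theta$ are real-analytic jointly in $(\theta,\varphi)$ on a compact neighborhood of the diagonal, and all derivatives are uniformly bounded. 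Moreover, $\mathrm{Re}f_\theta''(\theta)$ is continuous in $\theta$, and by Corollary~\ref{Cor-secderiv} it is negative. So there is $c>0$ with $\mathrm{Re}f_\theta''(\theta)\le -c$ uniformly on $[\varepsilon,\pi-\varepsilon]$. This uniform lower bound is needed throughout.

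Next, we write $e^{nf_\theta(\varphi)}=e^{nf_\theta(\theta)}e^{\frac n2 f_\theta''(\theta)s^2}\exp\{n(\frac16 f'''_\theta s^3+R_4)\}$. On the range $|s|<n^{-\frac12+\delta}$ we have $n|s|^3\le n^{-\frac12+3\delta}\to0$, precisely because $\delta<1/6$. Expanding the last exponential gives
\[
1+\tfrac n6 f_\theta'''(\theta)s^3+O(ns^4+n^2s^6).
\]
Multiplying by the expansion of $g_\theta$, the integrand becomes $e^{nf_\theta(\theta)}e^{\frac n2 f''_\theta(\theta) s^2}$ times
\[
g_\theta(\theta)+\big[g'_\theta(\theta)s+\tfrac n6 g_\theta(\theta)f'''_\theta(\theta)s^3\big]+E(s),
\qquad |E(s)|\le K'(s^2+ns^4+n^2s^6).
\]
The bracketed terms are odd in $s$, so they integrate to zero over the symmetric interval. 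Integrating $E(s)$ against the Gaussian $|e^{\frac n2 f''s^2}|\le e^{-\frac{cn}{2}s^2}$ gives $O(n^{-3/2})$, since $\int s^{2k}e^{-cns^2/2}\,ds\asymp n^{-k-\frac12}$ and each term of $E$ has this order. For the main term, we extend $\int_{|s|<n^{-1/2+\delta}}e^{\frac n2 f''s^2}ds$ to all of $\mathbb R$. The tails cost at most $O(e^{-c' n^{2\delta}})$, which is smaller than any power of $n$. The full integral is the complex Gaussian integral
\[
\int_{\mathbb R}e^{\frac n2 f''_\theta(\theta)s^2}\,ds=\sqrt{\frac{2\pi}{-n f''_\theta(\theta)}},
\]
valid since $\mathrm{Re}f''_\theta(\theta)<0$, with the principal branch of the square root. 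Dividing by $\pi i$ and writing $\frac{1}{i\sqrt{-f''}}=\frac{1}{\sqrt{f''}}$, with the branch convention that this defines $\sqrt{f_\theta''(\theta)}$, yields the main term $\sqrt2\pi^{-1/2}n^{-1/2}g_\theta(\theta)e^{nf_\theta(\theta)}/\sqrt{f''_\theta(\theta)}$. The error relative to this main term is $O(n^{-3/2})/n^{-1/2}=O(n^{-1})$. To convert this into the multiplicative form $[1+O(n^{-1})]$ we need $|g_\theta(\theta)|$ bounded below uniformly. This holds because every factor in \eqref{def-g} at $\varphi=\theta$ is nonzero: $\xi'(\theta)\ne0$, $e^{i\frac{\pi-\theta}{1+\alpha}}-\Theta_{\frac1\alpha}(\theta)\neq0$ since its imaginary part is nonzero, and so on. By continuity on the compact set, the lower bound is uniform.

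The main obstacle is bookkeeping uniformity rather than the expansion itself. Concretely, we must justify a uniform upper bound for the third and fourth derivatives of $f_\theta$ near the diagonal, and a uniform negative upper bound for $\mathrm{Re}f_\theta''(\theta)$. Corollary~\ref{Cor-secderiv} gives only pointwise negativity, so we will upgrade it by continuity in $\theta$ and compactness of $[\varepsilon,\pi-\varepsilon]$. We will also take care that the branch of the logarithm in $f_\theta$ is continuous along $C_+$ near $\varphi=\theta$, so that the Taylor expansion is legitimate.
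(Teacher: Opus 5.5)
Your proposal is correct and follows essentially the same route as the paper: Taylor expansion of $f_\theta$ to fourth order and $g_\theta$ to second order at the saddle, the odd terms vanishing by symmetry, Gaussian-moment bounds for the remainder (with $\delta<1/6$ guaranteeing $n|\varphi-\theta|^3\to0$), and the complex Gaussian integral with exponentially small tails. The paper differs only cosmetically, rescaling $\varphi=\theta+n^{-1/2}\rho$ first. Your explicit uniformity bookkeeping fills in points the paper uses only implicitly: the uniform bound $\mathrm{Re}\,f_\theta''(\theta)\le -c$, the uniform lower bound on $|g_\theta(\theta)|$, and the branch convention $\sqrt{f_\theta''(\theta)}=i\sqrt{-f_\theta''(\theta)}$.
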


Although the detailed proof of Lemma~\ref{est-int-cen} will be deferred to Section~\ref{prf-est-int-cen-lem}, we first give a brief outline of its proof here. By Lemma~\ref{lem-saddlepoint}, we have $f_{\theta}'(\theta)=0$. Hence, expanding the integrand in the integral \eqref{def-int-cen} yields
\begin{align*}
	\mathcal{I}_{\mathrm{center}}(\theta,n)&=\frac{1}{\pi i}\int_{\theta-n^{-\frac{1}{2}+\delta}}^{\theta+n^{-\frac{1}{2}+\delta}}e^{n[f_{\theta}(\varphi)+\frac{f_{\theta}''(\theta)}{2}(\varphi-\theta)^2+\mathrm{ErrorTerm}]}\times\big[g_{\theta}(\theta)+\mathrm{ErrorTerm}\big]\mathrm{d}\varphi\\
	&=\frac{1}{\pi i}g_{\theta}(\theta)e^{nf_{\theta}(\theta)}\int_{\theta-n^{-\frac{1}{2}+\delta}}^{\theta+n^{-\frac{1}{2}+\delta}}e^{\frac{f_{\theta}''(\theta)}{2}n(\varphi-\theta)^2}\times e^{\mathrm{ErrorTerm}}\times\big[1+\mathrm{ErrorTerm}\big]\mathrm{d}\varphi.
\end{align*}
Make the change of variable $\varphi=\theta+n^{-\frac{1}{2}}\rho$, then
\begin{align*}
	\mathcal{I}_{\mathrm{center}}(\theta,n)&=\frac{1}{\pi i}n^{-\frac{1}{2}}g_{\theta}(\theta)e^{nf_{\theta}(\theta)}\int_{-n^{\delta}}^{n^{\delta}}e^{\frac{f_{\theta}''(\theta)}{2}\rho^2}\times e^{\mathrm{ErrorTerm}}\times\big[1+\mathrm{ErrorTerm}\big]\mathrm{d}\rho\\
	&=\frac{1}{\pi i}n^{-\frac{1}{2}}g_{\theta}(\theta)e^{nf_{\theta}(\theta)}\int_{-n^{\delta}}^{n^{\delta}}e^{\frac{f_{\theta}''(\theta)}{2}\rho^2}\times\big[1+\mathrm{ErrorTerm}\big]\times\big[1+\mathrm{ErrorTerm}\big]\mathrm{d}\rho.
\end{align*}
Hence based on $\mathrm{Re}f_{\theta}''(\theta)<0$ from Corollary~\ref{Cor-secderiv}, we can show that
\begin{align*}
	\mathcal{I}_{\mathrm{center}}(\theta,n)&=\frac{1}{\pi i}n^{-\frac{1}{2}}g_{\theta}(\theta)e^{nf_{\theta}(\theta)}\int_{-\infty}^{+\infty}e^{\frac{f_{\theta}''(\theta)}{2}\rho^2}\mathrm{d}\rho\times\big[1+\mathrm{ErrorTerm}\big]\\
	&=\sqrt{2}\pi^{-\frac{1}{2}}n^{-\frac{1}{2}}\frac{g_{\theta}(\theta)}{\sqrt{f_{\theta}''(\theta)}}e^{nf_{\theta}(\theta)}\times\big[1+\mathrm{ErrorTerm}\big].
\end{align*}
The error terms above depend on the corresponding expansions, and their detailed estimation in Section~\ref{prf-est-int-cen-lem} will reveal the role of $\delta\in(0,1/6)$.

\subsubsection{Estimate of $\mathcal{I}_{\mathrm{left}}(\theta,n)+\mathcal{I}_{\mathrm{right}}(\theta,n)$}
For the second and third integrals \eqref{def-int-r+l}, the result is as follows:

\begin{lemma}\label{est-int-r+l}
	For any fixed $\alpha\geq1$, $a,b>-1$ and $\delta\in(0,1/6)$, as $n\to\infty$, we have
	\[
	\mathcal{I}_{\mathrm{left}}(\theta,n)+\mathcal{I}_{\mathrm{right}}(\theta,n)=\sqrt{2}\pi^{-\frac{1}{2}}n^{-\frac{1}{2}}\frac{g_{\theta}(\theta)}{\sqrt{f_{\theta}''(\theta)}}e^{nf_{\theta}(\theta)}\cdot o(n^{-1}),
	\]
	where the error term tends to zero uniformly for $\theta\in[\varepsilon,\pi-\varepsilon]$ with any fixed $0<\varepsilon<\frac{\pi}{2}$. 
\end{lemma}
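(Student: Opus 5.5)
The plan is to bound the two tail integrals crudely in modulus. Write $|e^{nf_\theta(\varphi)}|=T_\theta(\varphi)^{n/2}$ and compare with $T_\theta(\theta)^{n/2}=|e^{nf_\theta(\theta)}|$. By Lemma~\ref{lem-mono-T}, on $(0,\theta-n^{-1/2+\delta}]$ the function $T_\theta$ is increasing, so it is at most $T_\theta(\theta-n^{-1/2+\delta})$. On $[\theta+n^{-1/2+\delta},\pi)$ it is decreasing, so it is at most $T_\theta(\theta+n^{-1/2+\delta})$. Hence
\[
|\mathcal{I}_{\mathrm{left}}(\theta,n)|+|\mathcal{I}_{\mathrm{right}}(\theta,n)|\le \frac{1}{\pi}\max_{\pm}\Big(\frac{T_\theta(\theta\pm n^{-\frac12+\delta})}{T_\theta(\theta)}\Big)^{\frac n2}|e^{nf_\theta(\theta)}|\int_0^\pi|g_\theta(\varphi)|\,\mathrm{d}\varphi .
\]

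\textbf{Step 1: the ratio at the splitting points.} I would Taylor expand $\log T_\theta=2\mathrm{Re} f_\theta$ around $\varphi=\theta$. By Lemma~\ref{lem-saddlepoint} the first derivative vanishes, so
\[
\log T_\theta(\theta\pm h)-\log T_\theta(\theta)=\mathrm{Re}f_\theta''(\theta)\,h^2+O(h^3),\qquad h=n^{-1/2+\delta}.
\]
By Corollary~\ref{Cor-secderiv}, $\mathrm{Re}f_\theta''(\theta)<0$. Since this quantity is continuous in $\theta$, it is bounded above by some $-c<0$ on $[\varepsilon,\pi-\varepsilon]$. The $O(h^3)$ remainder is uniform because $f_\theta'''$ is continuous on a compact neighbourhood of $\{(\theta,\theta)\}$. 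Therefore the ratio raised to the power $n/2$ is at most $\exp(-\tfrac c2 n^{2\delta}+O(n^{-1/2+3\delta}))$, and this is $O(e^{-c' n^{2\delta}})$. The remainder exponent is negative exactly because $\delta<1/6$, which is where that restriction enters.

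\textbf{Step 2: integrability of $g_\theta$.} This step needs care at the endpoints. Near $\varphi=0$ we have $\Theta_{1/\alpha}(\varphi)\sim c\varphi$, $\xi'(\varphi)\sim\varphi^{\alpha-1}$, and the remaining factors are bounded. So $|g_\theta|\lesssim\varphi^{a+1-\alpha}\varphi^{\alpha-1}=\varphi^{a}$, which is integrable since $a>-1$. Near $\varphi=\pi$ we have $\Theta_{1/\alpha}\to1$ and the factor $[e^{i(\pi-\varphi)/(1+\alpha)}-\Theta_{1/\alpha}(\varphi)]^b$ behaves like $(\pi-\varphi)^b$, again integrable since $b>-1$. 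The denominator $\xi(\varphi)-t(\theta)$ stays away from zero for $\theta\in[\varepsilon,\pi-\varepsilon]$, because $t(\theta)$ lies strictly inside $(-1,1)$ and $C_+$ meets the real axis only at $\pm1$. Thus $\int_0^\pi|g_\theta|$ is bounded uniformly in $\theta$.

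\textbf{Step 3: comparison with the main term.} The quantity $|g_\theta(\theta)/\sqrt{f_\theta''(\theta)}|$ is bounded above and below by positive constants uniformly in $\theta$. Indeed $g_\theta(\theta)\ne0$ by Lemma~\ref{cla-xi'} and the explicit formula, and $|f_\theta''(\theta)|\ge|\mathrm{Re} f_\theta''(\theta)|\ge c$. So the tails are at most
\[
e^{-c'n^{2\delta}}\cdot n^{-1/2}\Big|\frac{g_\theta(\theta)}{\sqrt{f''_\theta(\theta)}}e^{nf_\theta(\theta)}\Big|\cdot O(n^{1/2}),
\]
and $e^{-c'n^{2\delta}}n^{1/2}=o(n^{-1})$.

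The main obstacle is the uniformity in Step 1. One needs the monotonicity of Lemma~\ref{lem-mono-T}, which is what lets us replace the whole tail by its endpoint values, together with a uniform positive lower bound on $-\mathrm{Re}f''_\theta(\theta)$ over the compact $\theta$-range.
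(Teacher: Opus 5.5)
Your proposal is correct and follows the paper's proof. You bound each tail by the endpoint value of $T_\theta^{n/2}$ via Lemma~\ref{lem-mono-T}, expand $\mathrm{Re}f_\theta$ to second order at the saddle with $\delta<1/6$ controlling the cubic remainder, and use $\int|g_\theta|=O(1)$. You also make explicit two points the paper leaves implicit: the endpoint integrability $|g_\theta(\varphi)|=O(\varphi^{a})$ near $0$ and $O((\pi-\varphi)^{b})$ near $\pi$ coming from $a,b>-1$, and the uniform positive lower bound on $|g_\theta(\theta)/\sqrt{f_\theta''(\theta)}|$ (equivalently on $|M_\alpha(\theta)|$) needed to rewrite $|e^{nf_\theta(\theta)}|\,n^{1/2}e^{-c'n^{2\delta}}$ in the stated main-term form.
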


Similarly, the detailed proof of Lemma~\ref{est-int-r+l} will be given in Section~\ref{prf-est-int-r+l-lem}, and here we briefly outline its proof. We only illustrate the integral $\mathcal{I}_{\mathrm{left}}(\theta,n)$. Note the definition of $T_{\theta}(\varphi)$ in \eqref{def-T}. It follows from \eqref{def-int-r+l} that
\[
|\mathcal{I}_{\mathrm{left}}(\theta,n)|\leq\frac{1}{\pi}\int_{0}^{\theta-n^{-\frac{1}{2}+\delta}}\big|e^{nf_{\theta}(\varphi)}\big|\cdot|g_{\theta}(\varphi)|\mathrm{d}\varphi=\frac{1}{\pi}\int_{0}^{\theta-n^{-\frac{1}{2}+\delta}}\big[T_{\theta}(\varphi)\big]^{\frac{n}{2}}\cdot|g_{\theta}(\varphi)|\mathrm{d}\varphi.
\]
Since the function $T_{\theta}(\varphi)$ is strictly increasing on $(0,\theta-n^{-\frac{1}{2}+\delta})$ by Lemma~\ref{lem-mono-T}, we get
\[
|\mathcal{I}_{\mathrm{left}}(\theta,n)|\leq\big[T_{\theta}(\theta-n^{-\frac{1}{2}+\delta})\big]^{\frac{n}{2}}\frac{1}{\pi}\int_{0}^{\theta-n^{-\frac{1}{2}+\delta}}|g_{\theta}(\varphi)|\mathrm{d}\varphi=\big[T_{\theta}(\theta-n^{-\frac{1}{2}+\delta})\big]^{\frac{n}{2}}\times\mathrm{ErrorTerm}.
\]
Furthermore, one can show that
\[
\big[T_{\theta}(\theta-n^{-\frac{1}{2}+\delta})\big]^{\frac{n}{2}}=e^{n\mathrm{Re}f_{\theta}(\theta-n^{-\frac{1}{2}+\delta})}=n^{-\frac{1}{2}}e^{n\mathrm{Re}f_{\theta}(\theta)}\times\mathrm{ErrorTerm}.
\]
Thus we conclude that
\[
\mathcal{I}_{\mathrm{left}}(\theta,n)=n^{-\frac{1}{2}}e^{nf_{\theta}(\theta)}\times\mathrm{ErrorTerm}=\sqrt{2}\pi^{-\frac{1}{2}}n^{-\frac{1}{2}}\frac{g_{\theta}(\theta)}{\sqrt{f_{\theta}''(\theta)}}e^{nf_{\theta}(\theta)}\times\mathrm{ErrorTerm}.
\]
Detailed estimates for $T_{\theta}(\theta-n^{-\frac{1}{2}+\delta})$ and the above error terms will be provided in Section~\ref{prf-est-int-r+l-lem}, and their detailed estimation will also reveal the role of $\delta\in(0,1/6)$.

\subsection{Conclusion: proof of Theorem~\ref{thm-Darbo-Formu-biop}}
Recall the definition of $x(\theta)$ from \eqref{def-xtha}:
\[
x(\theta)=1-2\Theta_{\frac{1}{\alpha}}(\theta)\big[\Theta_{\alpha}(\theta)\big]^{\frac{1}{\alpha}},
\]
where
\[
\Theta_{\alpha}(\theta)=\frac{\sin\theta}{(1+\alpha)\sin\frac{\pi-\theta}{1+\alpha}}\quad\text{and}\quad\Theta_{\frac{1}{\alpha}}(\theta)=\frac{\sin\theta}{(1+\frac{1}{\alpha})\sin\frac{\pi-\theta}{1+\frac{1}{\alpha}}}.
\]
Also recall that the functions $f_{\theta}(\varphi)$ and $g_{\theta}(\theta)$ in \eqref{def-f} and \eqref{def-g} involve $\xi(\varphi)$ and $\Theta_{\frac{1}{\alpha}}(\varphi)$, defined in \eqref{parame-xiphi-up} and \eqref{def-Tha-phi} respectively.

\begin{proposition}\label{est-int-cen+r+l}
	For any fixed $\alpha\geq1$ and $a,b>-1$, as $n\to\infty$, we have
	\[
	P_n^{(\alpha, a,b)}\big(x(\theta)\big)=\sqrt{2}\pi^{-\frac{1}{2}}n^{-\frac{1}{2}}\Big(\mathrm{Re}\Big\{\frac{g_{\theta}(\theta)}{\sqrt{f_{\theta}''(\theta)}}e^{nf_{\theta}(\theta)}\Big\}\Big)\big[1+O(n^{-1})\big],
	\]
	where the bound of the error term holds uniformly for $\theta\in[\varepsilon,\pi-\varepsilon]$ with any fixed $0<\varepsilon<\frac{\pi}{2}$. 
\end{proposition}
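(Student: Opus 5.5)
The plan is to combine the decomposition \eqref{Exp-Stedes-div} with Lemma~\ref{est-int-cen} and Lemma~\ref{est-int-r+l}. Write
\[
A_n(\theta):=\sqrt{2}\pi^{-\frac{1}{2}}n^{-\frac{1}{2}}\frac{g_{\theta}(\theta)}{\sqrt{f_{\theta}''(\theta)}}e^{nf_{\theta}(\theta)}.
\]
Lemma~\ref{est-int-cen} gives $\mathcal{I}_{\mathrm{center}}=A_n(\theta)[1+O(n^{-1})]$. Lemma~\ref{est-int-r+l} gives $\mathcal{I}_{\mathrm{left}}+\mathcal{I}_{\mathrm{right}}=A_n(\theta)\,o(n^{-1})$. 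Both hold uniformly for $\theta\in[\varepsilon,\pi-\varepsilon]$. Adding them, the full contour integral equals $A_n(\theta)[1+E_n(\theta)]$, where $E_n(\theta)$ is a complex error with $|E_n(\theta)|\le Kn^{-1}$ uniformly. By \eqref{Exp-Stedes-div}, $P_n^{(\alpha,a,b)}(x(\theta))=\mathrm{Re}\{A_n(\theta)\}+\mathrm{Re}\{A_n(\theta)E_n(\theta)\}$.

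The main obstacle is passing the real part through the multiplicative error. The error $A_nE_n$ is controlled by $|A_n|\,n^{-1}$, not by $|\mathrm{Re}\,A_n|\,n^{-1}$. Since $\mathrm{Re}\,A_n$ oscillates (it behaves like $\cos(n\theta+\dots)$), it vanishes at isolated $\theta$, so a literal relative error cannot hold at those zeros. I would interpret the statement as the paper does for the Darboux formula \eqref{Darbo-Formu}, namely
\[
P_n^{(\alpha,a,b)}(x(\theta))=\sqrt{2}\pi^{-\frac12}n^{-\frac12}\,\mathrm{Re}\Big\{\frac{g_\theta(\theta)}{\sqrt{f''_\theta(\theta)}}e^{nf_\theta(\theta)}\Big\}+O\big(n^{-\frac32}|e^{nf_\theta(\theta)}|\big).
\]
Here the notation $[1+O(n^{-1})]$ means that the real part is taken of the main term, with an additive error of relative size $n^{-1}$ compared with the modulus of the main term.

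To make the bound explicit I would write $|\mathrm{Re}\{A_nE_n\}|\le |A_n|Kn^{-1}$. I would then check that $|g_\theta(\theta)/\sqrt{f''_\theta(\theta)}|$ is bounded above and below by positive constants on $[\varepsilon,\pi-\varepsilon]$. Boundedness above follows from continuity in $\theta$. The denominator is nonzero because Corollary~\ref{Cor-secderiv} gives $\mathrm{Re}f''_\theta(\theta)<0$, and continuity on the compact interval then gives a uniform bound. The factor $g_\theta(\theta)$ is nonvanishing by Lemma~\ref{cla-xi'} and by $\Theta_{\frac1\alpha},\Theta_\alpha\in(0,1)$. This gives the uniform additive error $O(n^{-1})\,|A_n|$, which is the stated form.

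As a final step, recorded for the theorem, I would compute $e^{f_\theta(\theta)}$ from \eqref{def-f} at $\varphi=\theta$. Its modulus should be $\sin\frac{\pi-\theta}{1+\alpha}/\sin\frac{\pi-\theta}{1+\frac1\alpha}$ and its phase should be $e^{i\theta}$, up to the constant phase absorbed into $M_\alpha(\theta)$. I would also identify $g_\theta(\theta)/\sqrt{f''_\theta(\theta)}$ with $\frac{\alpha}{\sqrt{1+\alpha}}M_\alpha(\theta)$. This identification is needed only for Theorem~\ref{thm-Darbo-Formu-biop}, not for this proposition.
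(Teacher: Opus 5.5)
Your argument is the paper's. The paper's proof is the single line saying the proposition follows from \eqref{Exp-Stedes-div} together with Lemmas~\ref{est-int-cen} and~\ref{est-int-r+l}. Your splitting $P_n^{(\alpha,a,b)}(x(\theta))=\mathrm{Re}\{A_n(\theta)\}+\mathrm{Re}\{A_n(\theta)E_n(\theta)\}$, with $|E_n(\theta)|\le Kn^{-1}$ uniformly, is exactly that combination written out.

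Where you go beyond the paper, you are right. The one-line proof tacitly uses $\mathrm{Re}\{A_n[1+O(n^{-1})]\}=\mathrm{Re}\{A_n\}[1+O(n^{-1})]$, which does not follow. What the two lemmas actually give is your additive bound $O(n^{-1}|A_n|)$, i.e.\ $O(n^{-3/2}|e^{nf_\theta(\theta)}|)$ uniformly on $[\varepsilon,\pi-\varepsilon]$.

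The literal multiplicative form is in fact false in general. It would force $P_n^{(\alpha,a,b)}(x(\theta))=0$ at every zero of $\mathrm{Re}\{A_n(\theta)\}$. Already for $\alpha=1$, $a=b=0$, $\mathrm{Re}\{A_n(\theta)\}$ is a positive multiple of $\cos((n+\frac12)\theta-\frac{\pi}{4})$. The zeros of the Legendre polynomial $P_n(\cos\theta)$ are displaced from the zeros of this cosine by a nonzero amount of order $n^{-2}$ (away from $\theta=\pi/2$). This is why the classical formula \eqref{Darbo-Formu}, and the paper's own $\alpha=1$ specialization of the theorem, carry an additive error $O(n^{-3/2})$.

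Two minor points:
\begin{itemize}
\item For your conclusion you only need the uniform upper bound on $|g_\theta(\theta)/\sqrt{f_\theta''(\theta)}|$; the lower bound is not used.
\item In your final paragraph there is no phase to absorb. By \eqref{cal-Aalp}, $e^{f_\theta(\theta)}$ is $e^{i(\pi+\theta)}$ times a negative real number, so its phase is exactly $e^{i\theta}$. A leftover constant phase $e^{ic}$ could not have been absorbed anyway: it would become $e^{inc}$ in $e^{nf_\theta(\theta)}$, and $M_\alpha(\theta)$ is independent of $n$.
\end{itemize}
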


\begin{proof}
Proposition~\ref{est-int-cen+r+l} follows directly from expression \eqref{Exp-Stedes-div} together with Lemma~\ref{est-int-cen} and Lemma~\ref{est-int-r+l}.
\end{proof}

We next compute $f_{\theta}(\theta)$, $g_{\theta}(\theta)$ and $f_{\theta}''(\theta)$, and then use them in Proposition~\ref{est-int-cen+r+l} to obtain Theorem~\ref{thm-Darbo-Formu-biop}.

\subsubsection{Calculation of $f_{\theta}(\theta)$}
Taking $\varphi=\theta$ in the expression for $f_{\theta}(\varphi)$ in \eqref{def-f}, we have
\[
f_{\theta}(\theta)=\log\frac{e^{i(\pi+\theta)}\cdot[\Theta_{\frac{1}{\alpha}}(\theta)]^{\alpha}\cdot[e^{i\frac{\pi-\theta}{1+\alpha}}-\Theta_{\frac{1}{\alpha}}(\theta)]}{[\Theta_{\frac{1}{\alpha}}(\theta)]^{\alpha}\exp(-i\frac{\pi-\theta}{1+\frac{1}{\alpha}})-[\Theta_{\frac{1}{\alpha}}(\theta)]^{\alpha}\,\Theta_{\alpha}(\theta)}=\log\frac{e^{i(\pi+\theta)}\cdot[e^{i\frac{\pi-\theta}{1+\alpha}}-\Theta_{\frac{1}{\alpha}}(\theta)]}{e^{-i\frac{\pi-\theta}{1+\frac{1}{\alpha}}}-\Theta_{\alpha}(\theta)},
\]
that is,
\[
f_{\theta}(\theta)=\log\Bigg|\frac{e^{i\frac{\pi-\theta}{1+\alpha}}-\Theta_{\frac{1}{\alpha}}(\theta)}{e^{-i\frac{\pi-\theta}{1+\frac{1}{\alpha}}}-\Theta_{\alpha}(\theta)}\Bigg|+i\arg\Bigg\{e^{i(\pi+\theta)}\frac{e^{i\frac{\pi-\theta}{1+\alpha}}-\Theta_{\frac{1}{\alpha}}(\theta)}{e^{-i\frac{\pi-\theta}{1+\frac{1}{\alpha}}}-\Theta_{\alpha}(\theta)}\Bigg\}.
\]
Note that
\begin{align}\label{cal-Aalp}
	\frac{e^{i\frac{\pi-\theta}{1+\alpha}}-\Theta_{\frac{1}{\alpha}}(\theta)}{e^{-i\frac{\pi-\theta}{1+\frac{1}{\alpha}}}-\Theta_{\alpha}(\theta)}=-\frac{\sin\frac{\pi-\theta}{1+\alpha}}{\sin\frac{\pi-\theta}{1+\frac{1}{\alpha}}}.
\end{align}
Here we leave the calculation of identity \eqref{cal-Aalp} to Appendix~\ref{prf-Aalp}. Hence we get
\begin{align}\label{expre-f-MA}
	f_{\theta}(\theta)=\log\frac{\sin\frac{\pi-\theta}{1+\alpha}}{\sin\frac{\pi-\theta}{1+\frac{1}{\alpha}}}+i\theta.
\end{align}

\subsubsection{Calculation of $g_{\theta}(\theta)$}
Similarly, setting $\varphi=\theta$ in \eqref{def-g}, we have
\begin{align*}
		g_{\theta}(\theta)&=\Bigg[\frac{\Theta_{\frac{1}{\alpha}}(\theta)}{\Theta_{\frac{1}{\alpha}}(\theta)}\Bigg]^{a+1-\alpha}\times\frac{e^{-i[\pi+\frac{\pi-\theta}{1+\alpha}(a+b+1-\alpha)]}}{2[\Theta_{\alpha}(\theta)]^{\frac{a+1}{\alpha}-1}}\times\frac{[e^{i\frac{\pi-\theta}{1+\alpha}}-\Theta_{\frac{1}{\alpha}}(\theta)]^b}{\{1-\Theta_{\frac{1}{\alpha}}(\theta)[\Theta_{\alpha}(\theta)]^{\frac{1}{\alpha}}\}^b}\\
		&\quad\times\frac{\xi'(\theta)}{[\Theta_{\frac{1}{\alpha}}(\theta)]^{\alpha}\exp(-i\frac{\pi-\theta}{1+\frac{1}{\alpha}})-[\Theta_{\frac{1}{\alpha}}(\theta)]^{\alpha}\,\Theta_{\alpha}(\theta)}.
\end{align*}
After simplification and calculation, we obtain
\begin{align}\label{expre-gthatha}
	g_{\theta}(\theta)=\frac{e^{-i[\pi+\frac{\pi-\theta}{1+\alpha}(a+b+1-\alpha)]}\cdot[e^{i\frac{\pi-\theta}{1+\alpha}}-\Theta_{\frac{1}{\alpha}}(\theta)]^b\cdot[e^{i\frac{\pi-\theta}{1+\frac{1}{\alpha}}}-\Theta_{\alpha}(\theta)]\cdot\xi'(\theta)}{2[\Theta_{\frac{1}{\alpha}}(\theta)]^{\alpha}\,[\Theta_{\alpha}(\theta)]^{\frac{a+1}{\alpha}-1}\cdot\{1-\Theta_{\frac{1}{\alpha}}(\theta)[\Theta_{\alpha}(\theta)]^{\frac{1}{\alpha}}\}^b\cdot\{1+[\Theta_{\alpha}(\theta)]^2-2\Theta_{\alpha}(\theta)\cos\frac{\pi-\theta}{1+\frac{1}{\alpha}}\}}.
\end{align}

\subsubsection{Calculation of $f_{\theta}''(\theta)$}
Let us define
\begin{align}\label{def-bigF}
	F_{\theta}(\xi):=\log\frac{(\xi-1)[1-(\frac{1-\xi}{2})^{\frac{1}{\alpha}}]}{\xi-t(\theta)},
\end{align}
then, it follows from the definition of $f_{\theta}(\varphi)$ in \eqref{def-f} that
\begin{align}\label{redef-f}
	f_{\theta}(\varphi)=F_{\theta}\big(\xi(\varphi)\big).
\end{align}
From \eqref{def-bigF}, simple calculation and simplification give
\begin{align*}
	F_{\theta}'(\xi)&=\frac{1}{\xi-1}+\frac{(\frac{1-\xi}{2})^{\frac{1}{\alpha}}}{\alpha(1-\xi)[1-(\frac{1-\xi}{2})^{\frac{1}{\alpha}}]}-\frac{1}{\xi-t(\theta)}\\
	&=\frac{\alpha[(1+\frac{1}{\alpha})(\frac{1-\xi}{2})^{\frac{1}{\alpha}}-1][1-t(\theta)]-2(\frac{1-\xi}{2})^{1+\frac{1}{\alpha}}}{\alpha(1-\xi)[1-(\frac{1-\xi}{2})^{\frac{1}{\alpha}}][\xi-t(\theta)]}.
\end{align*}
Let
\begin{align}\label{def-upp}
	\mathrm{Upp}_{\theta}(\xi):=\alpha[(1+\frac{1}{\alpha})(\frac{1-\xi}{2})^{\frac{1}{\alpha}}-1][1-t(\theta)]-2(\frac{1-\xi}{2})^{1+\frac{1}{\alpha}}
\end{align}
and
\begin{align}\label{def-low}
	\mathrm{Low}_{\theta}(\xi):=\alpha(1-\xi)[1-(\frac{1-\xi}{2})^{\frac{1}{\alpha}}][\xi-t(\theta)].
\end{align}
Then, we can write
\[
F_{\theta}'(\xi)=\frac{\mathrm{Upp}_{\theta}(\xi)}{\mathrm{Low}_{\theta}(\xi)}.
\]
Thus it follows from \eqref{redef-f} that
\begin{align}\label{expre-fxiphi'}
	f_{\theta}'(\varphi)=F_{\theta}'\big(\xi(\varphi)\big)\xi'(\varphi)=\frac{\mathrm{Upp}_{\theta}\big(\xi(\varphi)\big)}{\mathrm{Low}_{\theta}\big(\xi(\varphi)\big)}\xi'(\varphi).
\end{align}

Set $\varphi=\theta$ in \eqref{expre-fxiphi'}, then
\[
f_{\theta}'(\theta)=\frac{\mathrm{Upp}_{\theta}\big(\xi(\theta)\big)}{\mathrm{Low}_{\theta}\big(\xi(\theta)\big)}\xi'(\theta).
\]
Since $\xi'(\theta)\neq0$ by Lemma~\ref{cla-xi'} and $f_{\theta}'(\theta)=0$ by Lemma~\ref{lem-saddlepoint}, the formula \eqref{expre-fxiphi'} yields
\[
\mathrm{Upp}_{\theta}\big(\xi(\theta)\big)=0.
\]
Based on \eqref{expre-fxiphi'}, we have
\[
f_{\theta}''(\varphi)=\frac{\mathrm{Upp}_{\theta}'\big(\xi(\varphi)\big)\mathrm{Low}_{\theta}\big(\xi(\varphi)\big)-\mathrm{Upp}_{\theta}\big(\xi(\varphi)\big)\mathrm{Low}_{\theta}'\big(\xi(\varphi)\big)}{\mathrm{Low}_{\theta}^2\big(\xi(\varphi)\big)}\big[\xi'(\varphi)\big]^2+\frac{\mathrm{Upp}_{\theta}\big(\xi(\varphi)\big)}{\mathrm{Low}_{\theta}\big(\xi(\varphi)\big)}\xi''(\varphi).
\]
Now that $\mathrm{Upp}_{\theta}\big(\xi(\theta)\big)=0$, we obtain
\begin{align}\label{expre-fxiphi''two}
	f_{\theta}''(\theta)=\frac{\mathrm{Upp}_{\theta}'\big(\xi(\theta)\big)}{\mathrm{Low}_{\theta}\big(\xi(\theta)\big)}\big[\xi'(\theta)\big]^2.
\end{align}

From \eqref{def-upp}, by a simple calculation and simplification, we have
\[
\mathrm{Upp}_{\theta}'(\xi)=\frac{1+\alpha}{2\alpha}(\frac{1-\xi}{2})^{\frac{1}{\alpha}-1}[t(\theta)-\xi].
\]
Combining this with \eqref{def-low} and simplifying, we get
\[
\frac{\mathrm{Upp}_{\theta}'(\xi)}{\mathrm{Low}_{\theta}(\xi)}=-\frac{(1+\alpha)(\frac{1-\xi}{2})^{\frac{1}{\alpha}-2}}{4\alpha^2[1-(\frac{1-\xi}{2})^{\frac{1}{\alpha}}]}.
\]
Hence by \eqref{expre-fxiphi''two}, we obtain
\begin{align}\label{expre-fxitha''two}
	f_{\theta}''(\theta)=-\frac{(1+\alpha)(\frac{1-\xi(\theta)}{2})^{\frac{1}{\alpha}-2}}{4\alpha^2[1-(\frac{1-\xi(\theta)}{2})^{\frac{1}{\alpha}}]}\big[\xi'(\theta)\big]^2.
\end{align}

Recall the expression of $\xi(\varphi)$ in \eqref{parame-xiphi-up}, set $\varphi=\theta$, then
\[
\xi(\theta)=1-2\big[\Theta_{\frac{1}{\alpha}}(\theta)\big]^{\alpha}\exp\Big(-i\frac{\pi-\theta}{1+\frac{1}{\alpha}}\Big).
\]
This combines \eqref{expre-fxitha''two} and implies that
\begin{align}\label{expre-fxitha''three}
	\begin{split}
		f_{\theta}''(\theta)&=-\frac{(1+\alpha)[\Theta_{\frac{1}{\alpha}}(\theta)]^{1-2\alpha}\exp(-i\frac{\pi-\theta}{1+\alpha}(1-2\alpha))}{4\alpha^2[1-\Theta_{\frac{1}{\alpha}}(\theta)\exp(-i\frac{\pi-\theta}{1+\alpha})]}\big[\xi'(\theta)\big]^2\\
		&=\frac{1+\alpha}{4\alpha^2}\cdot\frac{e^{-i(\pi-\frac{\pi-\theta}{1+\alpha}2\alpha)}\cdot[\Theta_{\frac{1}{\alpha}}(\theta)]^{1-2\alpha}\cdot[\xi'(\theta)]^2}{e^{i\frac{\pi-\theta}{1+\alpha}}-\Theta_{\frac{1}{\alpha}}(\theta)}.
	\end{split}
\end{align}

\subsubsection{Proof of Theorem~\ref{thm-Darbo-Formu-biop}}
It follows from \eqref{expre-fxitha''three} that
\begin{align}\label{expre-sqrf''thatha}
	\frac{1}{\sqrt{f_{\theta}''(\theta)}}=\frac{2\alpha}{\sqrt{1+\alpha}}\cdot\frac{[e^{i\frac{\pi-\theta}{1+\alpha}}-\Theta_{\frac{1}{\alpha}}(\theta)]^{\frac{1}{2}}}{e^{-i(\frac{\pi}{2}-\frac{\pi-\theta}{1+\alpha}\alpha)}\cdot[\Theta_{\frac{1}{\alpha}}(\theta)]^{\frac{1}{2}-\alpha}\cdot\xi'(\theta)}.
\end{align}
Recalling the definition of $M_{\alpha}(\theta)$ in \eqref{def-Balp}:
\[
M_{\alpha}(\theta)=\frac{e^{-i[\frac{\pi}{2}+\frac{\pi-\theta}{1+\alpha}(a+b+1)]}\cdot[e^{i\frac{\pi-\theta}{1+\alpha}}-\Theta_{\frac{1}{\alpha}}(\theta)]^{b+\frac{1}{2}}\cdot[e^{i\frac{\pi-\theta}{1+\frac{1}{\alpha}}}-\Theta_{\alpha}(\theta)]}{[\Theta_{\frac{1}{\alpha}}(\theta)]^{\frac{1}{2}}\,[\Theta_{\alpha}(\theta)]^{\frac{a+1}{\alpha}-1}\cdot\{1-\Theta_{\frac{1}{\alpha}}(\theta)[\Theta_{\alpha}(\theta)]^{\frac{1}{\alpha}}\}^b\cdot\{1+[\Theta_{\alpha}(\theta)]^2-2\Theta_{\alpha}(\theta)\cos\frac{\pi-\theta}{1+\frac{1}{\alpha}}\}},
\]
multiplying \eqref{expre-gthatha} and \eqref{expre-sqrf''thatha}, and comparing the result with the expression for $M_{\alpha}(\theta)$, we obtain
\begin{align}\label{expre-gf''-B}
	\frac{g_{\theta}(\theta)}{\sqrt{f_{\theta}''(\theta)}}=\frac{\alpha}{\sqrt{1+\alpha}}M_{\alpha}(\theta).
\end{align}

Proposition~\ref{est-int-cen+r+l} together with \eqref{expre-f-MA} and \eqref{expre-gf''-B} implies that
\begin{align*}
	P_n^{(\alpha, a,b)}\big(x(\theta)\big)&=\sqrt{2}\pi^{-\frac{1}{2}}n^{-\frac{1}{2}}\Big(\mathrm{Re}\Big\{\frac{g_{\theta}(\theta)}{\sqrt{f_{\theta}''(\theta)}}e^{nf_{\theta}(\theta)}\Big\}\Big)\big[1+O(n^{-1})\big]\\
	&=\sqrt{2}\pi^{-\frac{1}{2}}n^{-\frac{1}{2}}\Big(\mathrm{Re}\Big\{\frac{\alpha}{\sqrt{1+\alpha}}M_{\alpha}(\theta)e^{n[\log(\sin\frac{\pi-\theta}{1+\alpha}/\sin\frac{\pi-\theta}{1+\frac{1}{\alpha}})+i\theta]}\Big\}\Big)\big[1+O(n^{-1})\big],
\end{align*}
that is,
\[
P_n^{(\alpha, a,b)}\big(x(\theta)\big)=\frac{\sqrt{2}\,\alpha}{\sqrt{1+\alpha}}\pi^{-\frac{1}{2}}n^{-\frac{1}{2}}\Bigg(\frac{\sin\frac{\pi-\theta}{1+\alpha}}{\sin\frac{\pi-\theta}{1+\frac{1}{\alpha}}}\Bigg)^n\Big(\mathrm{Re}\big\{M_{\alpha}(\theta)e^{in\theta}\big\}\Big)\big[1+O(n^{-1})\big].
\]
This completes the proof of Theorem~\ref{thm-Darbo-Formu-biop}.

\section{Verify conditions for steepest descent}
In this section, we are going to verify the conditions for the method of steepest descent. Namely, we will prove Lemmas~\ref{lem-saddlepoint} and \ref{lem-mono-T}.

\subsection{Proof of Lemma~\ref{lem-saddlepoint}}\label{prf-saddlepoint-lem}
By \eqref{expre-fxiphi'}, we have
\[
f_{\theta}'(\varphi)=\frac{\mathrm{Upp}_{\theta}\big(\xi(\varphi)\big)}{\mathrm{Low}_{\theta}\big(\xi(\varphi)\big)}\xi'(\varphi).
\]
Our aim is to prove that the equation $f_{\theta}'(\varphi)=0$ has a unique solution $\varphi=\theta$ in $(0,\pi)$. Since $\xi'(\theta)\neq0$ by Lemma~\ref{cla-xi'}, we turn to show that the equation $\mathrm{Upp}_{\theta}\big(\xi(\varphi)\big)=0$ has a unique solution $\varphi=\theta$ in $(0,\pi)$.

The expression of $\mathrm{Upp}_{\theta}\big(\xi(\varphi)\big)$ is given in \eqref{def-upp}:
\begin{align}\label{cal-1}
	\mathrm{Upp}_{\theta}\big(\xi(\varphi)\big)=\alpha\big[(1+\tfrac{1}{\alpha})\big(\tfrac{1-\xi(\varphi)}{2}\big)^{\frac{1}{\alpha}}-1\big]\big[1-t(\theta)\big]-2\big(\tfrac{1-\xi(\varphi)}{2}\big)^{1+\frac{1}{\alpha}}
\end{align}
with $\xi(\varphi)$ and $t(\theta)$ defined in \eqref{parame-xiphi-up} and \eqref{def-ttha} respectively by
\[
\xi(\varphi)=1-2\big[\Theta_{\frac{1}{\alpha}}(\varphi)\big]^{\alpha}\exp\Big(-i\frac{\pi-\varphi}{1+\frac{1}{\alpha}}\Big)\quad\text{and}\quad t(\theta)=1-2\big[\Theta_{\frac{1}{\alpha}}(\theta)\big]^{\alpha}\,\Theta_{\alpha}(\theta).
\]

Noticing that
\[
\frac{1-\xi(\varphi)}{2}=\big[\Theta_{\frac{1}{\alpha}}(\varphi)\big]^{\alpha}\exp\Big(-i\frac{\pi-\varphi}{1+\frac{1}{\alpha}}\Big),
\]
one can show that
\begin{align*}
	\alpha&\big[(1+\tfrac{1}{\alpha})\big(\tfrac{1-\xi(\varphi)}{2}\big)^{\frac{1}{\alpha}}-1\big]=\alpha\big[(1+\tfrac{1}{\alpha})\Theta_{\frac{1}{\alpha}}(\varphi)\exp\Big(-i\frac{\pi-\varphi}{1+\alpha}\Big)-1\big]\\
	&=\alpha\Big[(1+\tfrac{1}{\alpha})\Theta_{\frac{1}{\alpha}}(\varphi)\exp\Big(i\frac{\pi-\varphi}{1+\frac{1}{\alpha}}\Big)-\exp\big(i(\pi-\varphi)\big)\Big]\exp\big(-i(\pi-\varphi)\big).
\end{align*}
Now, by \eqref{def-Tha-phi},
\[
\Theta_{\frac{1}{\alpha}}(\varphi)=\frac{\sin\varphi}{(1+\frac{1}{\alpha})\sin\frac{\pi-\varphi}{1+\frac{1}{\alpha}}}=\frac{\sin(\pi-\varphi)}{(1+\frac{1}{\alpha})\sin\frac{\pi-\varphi}{1+\frac{1}{\alpha}}},
\]
thus we get
\begin{align*}
	\alpha\big[(1+\tfrac{1}{\alpha})\big(\tfrac{1-\xi(\varphi)}{2}\big)^{\frac{1}{\alpha}}-1\big]&=\alpha\Big[\frac{\sin(\pi-\varphi)}{\sin\frac{\pi-\varphi}{1+\frac{1}{\alpha}}}\cos\frac{\pi-\varphi}{1+\frac{1}{\alpha}}-\cos(\pi-\varphi)\Big]\exp\big(-i(\pi-\varphi)\big)\\
	&=\alpha\frac{\sin\frac{\pi-\varphi}{1+\alpha}}{\sin\frac{\pi-\varphi}{1+\frac{1}{\alpha}}}\exp\big(-i(\pi-\varphi)\big).
\end{align*}
This implies that
\begin{align}\label{cal-2}
	\alpha\big[(1+\tfrac{1}{\alpha})\big(\tfrac{1-\xi(\varphi)}{2}\big)^{\frac{1}{\alpha}}-1\big]=\frac{\Theta_{\frac{1}{\alpha}}(\varphi)}{\Theta_{\alpha}(\varphi)}\exp\big(-i(\pi-\varphi)\big).
\end{align}

In addition, we have
\begin{align}\label{cal-3}
	1-t(\theta)=2\big[\Theta_{\frac{1}{\alpha}}(\theta)\big]^{\alpha}\,\Theta_{\alpha}(\theta)\quad\text{and}\quad2\big(\tfrac{1-\xi(\varphi)}{2}\big)^{1+\frac{1}{\alpha}}=2\big[\Theta_{\frac{1}{\alpha}}(\varphi)\big]^{1+\alpha}\exp\big(-i(\pi-\varphi)\big).
\end{align}
Thus, combining \eqref{cal-1} with \eqref{cal-2} and \eqref{cal-3}, we obtain
\begin{align*}
	\mathrm{Upp}_{\theta}\big(\xi(\varphi)\big)&=2\frac{\Theta_{\frac{1}{\alpha}}(\varphi)}{\Theta_{\alpha}(\varphi)}\exp\big(-i(\pi-\varphi)\big)\big[\Theta_{\frac{1}{\alpha}}(\theta)\big]^{\alpha}\,\Theta_{\alpha}(\theta)-2\big[\Theta_{\frac{1}{\alpha}}(\varphi)\big]^{1+\alpha}\exp\big(-i(\pi-\varphi)\big)\\
	&=-2\frac{\Theta_{\frac{1}{\alpha}}(\varphi)}{\Theta_{\alpha}(\varphi)}\Big\{\big[\Theta_{\frac{1}{\alpha}}(\varphi)\big]^{\alpha}\,\Theta_{\alpha}(\varphi)-\big[\Theta_{\frac{1}{\alpha}}(\theta)\big]^{\alpha}\,\Theta_{\alpha}(\theta)\Big\}\exp\big(-i(\pi-\varphi)\big).
\end{align*}
Therefore, $\mathrm{Upp}_{\theta}\big(\xi(\varphi)\big)=0$ is equivalent to
\begin{align}\label{cal-4}
	\big[\Theta_{\frac{1}{\alpha}}(\varphi)\big]^{\alpha}\,\Theta_{\alpha}(\varphi)=\big[\Theta_{\frac{1}{\alpha}}(\theta)\big]^{\alpha}\,\Theta_{\alpha}(\theta).
\end{align}
We now show that the equation \eqref{cal-4} has a unique solution $\varphi=\theta$ in $(0,\pi)$. 

It follows from the proof of Lemma~\ref{lem-xtha} that the functions $\Theta_{\frac{1}{\alpha}}(\varphi)$ and $\Theta_{\alpha}(\varphi)$ are strictly increasing on $(0,\pi)$. This yields that the function $\big[\Theta_{\frac{1}{\alpha}}(\varphi)\big]^{\alpha}\,\Theta_{\alpha}(\varphi)$ is also strictly increasing on $(0,\pi)$. Hence the equation \eqref{cal-4} has a unique solution $\varphi=\theta$ in $(0,\pi)$.

This completes the proof of Lemma~\ref{lem-saddlepoint}.

\subsection{Proof of Lemma~\ref{lem-mono-T}}\label{prf-monoT-lem}
For convenience, let
\begin{align}\label{def-klrs}
	\begin{split}
		k(&\varphi):=\big[\Theta_{\frac{1}{\alpha}}(\varphi)\big]^{2\alpha},\quad l(\varphi):=1+\big[\Theta_{\frac{1}{\alpha}}(\varphi)\big]^2-2\Theta_{\frac{1}{\alpha}}(\varphi)\cos\frac{\pi-\varphi}{1+\alpha},\\
		&r(\varphi):=-2\big[\Theta_{\frac{1}{\alpha}}(\varphi)\big]^{\alpha}\cos\frac{\pi-\varphi}{1+\frac{1}{\alpha}},\quad s(\varphi):=\big[\Theta_{\frac{1}{\alpha}}(\varphi)\big]^{\alpha}\,\Theta_{\alpha}(\varphi).
	\end{split}
\end{align}
Then, by \eqref{def-T}, the function $T_{\theta}(\varphi)$ can be rewritten as
\begin{align}\label{rew-T}
	T_{\theta}(\varphi)=\frac{k(\varphi)l(\varphi)}{k(\varphi)+r(\varphi)s(\theta)+s^2(\theta)}.
\end{align}
We aim to prove that for any fixed $\theta\in(0,\pi)$, the function $T_{\theta}(\varphi)$ is strictly increasing on $(0,\theta)$ and strictly decreasing on $(\theta,\pi)$.

\subsubsection{Expression for $T_{\theta}'(\varphi)$}
The derivative of $T_{\theta}(\varphi)$ is
\begin{align*}
	T_{\theta}'(\varphi)&=\frac{(kl)'(\varphi)[k(\varphi)+r(\varphi)s(\theta)+s^2(\theta)]-(kl)(\varphi)[k'(\varphi)+r'(\varphi)s(\theta)]}{[k(\varphi)+r(\varphi)s(\theta)+s^2(\theta)]^2}\\
	&=\frac{(kl)'(\varphi)s^2(\theta)+[(kl)'r-klr'](\varphi)s(\theta)+(k^2l')(\varphi)}{[k(\varphi)+r(\varphi)s(\theta)+s^2(\theta)]^2}.
\end{align*}
Define
\begin{align}\label{def-uvw}
	u(\varphi):=(kl)'(\varphi),\quad v(\varphi):=[(kl)'r-klr'](\varphi)\quad\text{and}\quad w(\varphi):=(k^2l')(\varphi).
\end{align}
Then, we rewrite $T_{\theta}'(\varphi)$ as
\begin{align}\label{cal-T'}
	T_{\theta}'(\varphi)=\frac{u(\varphi)s^2(\theta)+v(\varphi)s(\theta)+w(\varphi)}{[k(\varphi)+r(\varphi)s(\theta)+s^2(\theta)]^2}.
\end{align}

Recall the second formula in \eqref{exp-TT'}:
\[
T_{\theta}'(\varphi)=2e^{2\mathrm{Re}f_{\theta}(\varphi)}\mathrm{Re}f_{\theta}'(\varphi).
\]
Noticing that $\mathrm{Re}f_{\theta}'(\theta)=0$ by Lemma~\ref{lem-saddlepoint}, and setting $\varphi=\theta$, we get
\[
T_{\theta}'(\theta)=0,
\]
which implies that
\[
u(\theta)s^2(\theta)+v(\theta)s(\theta)+w(\theta)=0.
\]
Since $\theta$ is arbitrary in $(0,\pi)$, we have
\[
u(\varphi)s^2(\varphi)+v(\varphi)s(\varphi)+w(\varphi)=0.
\]
It follows that
\begin{align}\label{cal-v}
	v(\varphi)=-u(\varphi)s(\varphi)-\frac{w(\varphi)}{s(\varphi)}.
\end{align}
Hence by \eqref{cal-T'} and \eqref{cal-v}, we have
\[
T_{\theta}'(\varphi)=\frac{u(\varphi)s^2(\theta)-[u(\varphi)s(\varphi)+\frac{w(\varphi)}{s(\varphi)}]s(\theta)+w(\varphi)}{[k(\varphi)+r(\varphi)s(\theta)+s^2(\theta)]^2}.
\]
After simplification, we obtain
\begin{align}\label{cal-sim-T'}
	T_{\theta}'(\varphi)=\frac{[s(\varphi)-s(\theta)][w(\varphi)-u(\varphi)s(\varphi)s(\theta)]}{s(\varphi)[k(\varphi)+r(\varphi)s(\theta)+s^2(\theta)]^2}.
\end{align}

\subsubsection{Analysis of $T_{\theta}'(\varphi)$}
It follows from the proof of Lemma~\ref{lem-xtha} that the functions $\Theta_{\frac{1}{\alpha}}(\varphi)$ and $\Theta_{\alpha}(\varphi)$ are strictly increasing on $(0,\pi)$ with $\Theta_{\frac{1}{\alpha}}(0^+)=\Theta_{\alpha}(0^+)=0$ and $\Theta_{\frac{1}{\alpha}}(\pi^-)=\Theta_{\alpha}(\pi^-)=1$. Hence by the definition of $s(\varphi)$ from the last formula in \eqref{def-klrs}, the function $s(\varphi)$ is strictly increasing on $(0,\pi)$ with $s(0^+)=0$ and $s(\pi^-)=1$. Thus we conclude that the equation
\[
s(\varphi)-s(\theta)=0
\]
has a unique solution $\varphi=\theta$ in $(0,\pi)$.

The following Claim~\ref{clm-anaT'} is key to analyzing the properties of $T_{\theta}'(\varphi)$, and its proof will be given in Section~\ref{prf-clm-anaT'}.

\begin{claim}\label{clm-anaT'}
	Fix $\alpha\geq1$. Then for any $\varphi\in(0,\pi)$, either
	\begin{itemize}
		\item $u(\varphi)=0$ and $w(\varphi)\neq0$, or
		\vspace{2mm}
		\item $u(\varphi)\neq0$ and $\frac{w(\varphi)}{u(\varphi)s(\varphi)}\notin(0,1)$.
	\end{itemize}
\end{claim}

\begin{remark}\label{rem-3}
	In the proof of Claim~\ref{clm-anaT'} for the second case $u(\varphi)\neq0$, the assumption $\alpha\geq1$ will be used in Lemma~\ref{lem-del} in Section~\ref{Analysisofhvarphi}. Moreover, Remark~\ref{rem-4} points out that Lemma~\ref{lem-del} fails when $0<\alpha<1$. Hence Lemma~\ref{lem-mono-T} requires $\alpha\geq1$, and consequently so does Theorem~\ref{thm-Darbo-Formu-biop}.
	
	More explicitly, Claim~\ref{clm-anaT'} fails for $0<\alpha<1$. In fact, we can show that when $0<\alpha<1$, there exists a sufficiently small $\varepsilon_0>0$ such that for every $\varphi\in(0,\varepsilon_0)$, 
	\[
	u(\varphi)>0\quad\text{and}\quad0<\frac{w(\varphi)}{u(\varphi)s(\varphi)}<1.
	\]
	This will be explained in detail in Section~\ref{proofofremark3}.
\end{remark}

Note that
\begin{align}\label{cal-5}
w(\varphi)-u(\varphi)s(\varphi)s(\theta)=\left\{
\begin{array}{cl}
	w(\varphi)&\text{ if $u(\varphi)=0$}
	\vspace{2mm}
	\\
	u(\varphi)s(\varphi)\big[\frac{w(\varphi)}{u(\varphi)s(\varphi)}-s(\theta)\big]&\text{ if $u(\varphi)\neq0$}
\end{array}\right..
\end{align}
Since $s(\varphi)\in(0,1)$ for all $\varphi\in(0,\pi)$, we have $s(\theta)\in(0,1)$ in particular. Then, by \eqref{cal-5}, it follows from Claim~\ref{clm-anaT'} that for any $\varphi\in(0,\pi)$,
\[
w(\varphi)-u(\varphi)s(\varphi)s(\theta)\neq0.
\]
Therefore, by the expression of $T_{\theta}'(\varphi)$ in \eqref{cal-sim-T'}, the equation $T_{\theta}'(\varphi)=0$ has a unique solution $\varphi=\theta$ in $(0,\pi)$.

\subsubsection{Conclusion: monotonicity of $T_{\theta}(\varphi)$}
Note that $k(0^+)=0$ and $l(\pi^-)=0$ by \eqref{def-klrs}. Hence by \eqref{rew-T}, we have
\[
T_{\theta}(0^+)=T_{\theta}(\pi^-)=0.
\]
In addition, the first formula in \eqref{exp-TT'} together with \eqref{expre-f-MA} implies that
\[
T_{\theta}(\theta)=e^{2\mathrm{Re}f_{\theta}(\theta)}=\Bigg(\frac{\sin\frac{\pi-\theta}{1+\alpha}}{\sin\frac{\pi-\theta}{1+\frac{1}{\alpha}}}\Bigg)^2>0.
\]
Now that $T_{\theta}'(\varphi)=0$ has a unique solution $\varphi=\theta$ in $(0,\pi)$, we can conclude that $T_{\theta}'(\varphi)>0$ on $(0,\theta)$ and $T_{\theta}'(\varphi)<0$ on $(\theta,\pi)$. Therefore, the function $T_{\theta}(\varphi)$ is strictly increasing on $(0,\theta)$ and strictly decreasing on $(\theta,\pi)$.

Thus far, the only remaining step in the proof of Lemma~\ref{lem-mono-T} is to prove Claim~\ref{clm-anaT'}, which will be provided in Section~\ref{prf-clm-anaT'}.

\section{Estimates of path integrals}
In this section, we estimate the path integrals $\mathcal{I}_{\mathrm{center}}(\theta,n)$ and $\mathcal{I}_{\mathrm{left}}(\theta,n)+\mathcal{I}_{\mathrm{right}}(\theta,n)$ defined in \eqref{def-int-cen} and \eqref{def-int-r+l}. Specifically, we prove Lemmas~\ref{est-int-cen} and \ref{est-int-r+l}. Throughout the proofs, we fix some $0<\varepsilon<\frac{\pi}{2}$ and consider $\theta\in[\varepsilon,\pi-\varepsilon]$. Furthermore, the parameter $\delta$ appearing in Section~\ref{Sec-del} satisfies $\delta\in(0,1/6)$.

\subsection{Proof of Lemma~\ref{est-int-cen}}\label{prf-est-int-cen-lem}
Recall the integral $\mathcal{I}_{\mathrm{center}}(\theta,n)$ defined in \eqref{def-int-cen}:
\begin{align}\label{copy-int-cent}
	\mathcal{I}_{\mathrm{center}}(\theta,n)=\frac{1}{\pi i}\int_{\theta-n^{-\frac{1}{2}+\delta}}^{\theta+n^{-\frac{1}{2}+\delta}}e^{nf_{\theta}(\varphi)}g_{\theta}(\varphi)\mathrm{d}\varphi.
\end{align}
By Lemma~\ref{lem-saddlepoint}, we have $f_{\theta}'(\theta)=0$. Hence, expanding the function $f_{\theta}(\varphi)$ at $\varphi=\theta$, we have
\begin{align}\label{expan-f}
	f_{\theta}(\varphi)=f_{\theta}(\theta)+\frac{f_{\theta}''(\theta)}{2}(\varphi-\theta)^2+\frac{f_{\theta}'''(\theta)}{6}(\varphi-\theta)^3+A_{\theta,\varphi}(\varphi-\theta)^4,
\end{align}
where
\[
A_{\theta,\varphi}:=\frac{\mathrm{Re}f^{(4)}_{\theta}(\phi_{1,\theta,\varphi})+i\mathrm{Im}f^{(4)}_{\theta}(\phi_{2,\theta,\varphi})}{24}\quad\text{with}\quad\phi_{1,\theta,\varphi},\,\phi_{2,\theta,\varphi}\in(\theta,\varphi)\text{ or }(\varphi,\theta).
\]
Similarly, expanding the function $g_{\theta}(\varphi)$ at $\varphi=\theta$, we have
\begin{align}\label{expan-g}
	g_{\theta}(\varphi)=g_{\theta}(\theta)+g_{\theta}'(\theta)(\varphi-\theta)+B_{\theta,\varphi}(\varphi-\theta)^2,
\end{align}
where
\[
B_{\theta,\varphi}:=\frac{\mathrm{Re}g^{''}_{\theta}(\phi_{3,\theta,\varphi})+i\mathrm{Im}g^{''}_{\theta}(\phi_{4,\theta,\varphi})}{2}\quad\text{with}\quad\phi_{3,\theta,\varphi},\,\phi_{4,\theta,\varphi}\in(\theta,\varphi)\text{ or }(\varphi,\theta).
\]
Thus by \eqref{copy-int-cent}, \eqref{expan-f} and \eqref{expan-g}, we obtain
\begin{align*}
	\mathcal{I}_{\mathrm{center}}(\theta,n)=\frac{1}{\pi i}\int_{\theta-n^{-\frac{1}{2}+\delta}}^{\theta+n^{-\frac{1}{2}+\delta}}&e^{n[f_{\theta}(\theta)+\frac{f_{\theta}''(\theta)}{2}(\varphi-\theta)^2+\frac{f_{\theta}'''(\theta)}{6}(\varphi-\theta)^3+A_{\theta,\varphi}(\varphi-\theta)^4]}\\
	&\times\big[g_{\theta}(\theta)+g_{\theta}'(\theta)(\varphi-\theta)+B_{\theta,\varphi}(\varphi-\theta)^2\big]\mathrm{d}\varphi.
\end{align*}
This can be rewritten as
\begin{align*}
	\mathcal{I}_{\mathrm{center}}(\theta,n)=\frac{1}{\pi i}g_{\theta}(\theta)e^{nf_{\theta}(\theta)}\int_{\theta-n^{-\frac{1}{2}+\delta}}^{\theta+n^{-\frac{1}{2}+\delta}}&e^{\frac{f_{\theta}''(\theta)}{2}n(\varphi-\theta)^2}\times e^{\frac{f_{\theta}'''(\theta)}{6}n(\varphi-\theta)^3+A_{\theta,\varphi}n(\varphi-\theta)^4}\\
	&\times\Big[1+\frac{g_{\theta}'(\theta)}{g_{\theta}(\theta)}(\varphi-\theta)+\frac{B_{\theta,\varphi}}{g_{\theta}(\theta)}(\varphi-\theta)^2\Big]\mathrm{d}\varphi.
\end{align*}
Making the change of variable $\varphi=\theta+n^{-\frac{1}{2}}\rho$, we obtain
\begin{align}\label{int-sec-cent}
	\begin{split}
		\mathcal{I}_{\mathrm{center}}(\theta,n)=\frac{1}{\pi i}n^{-\frac{1}{2}}g_{\theta}(\theta)e^{nf_{\theta}(\theta)}\int_{-n^{\delta}}^{n^{\delta}}&e^{\frac{f_{\theta}''(\theta)}{2}\rho^2}\times e^{\frac{f_{\theta}'''(\theta)}{6}n^{-\frac{1}{2}}\rho^3+A_{\theta,\rho}n^{-1}\rho^4}\\
		&\times\Big[1+\frac{g_{\theta}'(\theta)}{g_{\theta}(\theta)}n^{-\frac{1}{2}}\rho+\frac{B_{\theta,\rho}}{g_{\theta}(\theta)}n^{-1}\rho^2\Big]\mathrm{d}\rho.
	\end{split}
\end{align}
Here we simply denote $A_{\theta,\rho}=A_{\theta,\theta+n^{-\frac{1}{2}}\rho}$ and $B_{\theta,\rho}=B_{\theta,\theta+n^{-\frac{1}{2}}\rho}$.

Expanding the second factor of the integrand in \eqref{int-sec-cent} as follows:
\[
e^{\frac{f_{\theta}'''(\theta)}{6}n^{-\frac{1}{2}}\rho^3+A_{\theta,\rho}n^{-1}\rho^4}=1+\frac{f_{\theta}'''(\theta)}{6}n^{-\frac{1}{2}}\rho^3+A_{\theta,\rho}n^{-1}\rho^4+\sum_{j=2}^{\infty}\frac{\big[\frac{f_{\theta}'''(\theta)}{6}n^{-\frac{1}{2}}\rho^3+A_{\theta,\rho}n^{-1}\rho^4\big]^j}{j!}.
\]
Then, by a straightforward calculation and simplification, we obtain
\begin{align*}
	&\quad e^{\frac{f_{\theta}'''(\theta)}{6}n^{-\frac{1}{2}}\rho^3+A_{\theta,\rho}n^{-1}\rho^4}\times\Big[1+\frac{g_{\theta}'(\theta)}{g_{\theta}(\theta)}n^{-\frac{1}{2}}\rho+\frac{B_{\theta,\rho}}{g_{\theta}(\theta)}n^{-1}\rho^2\Big]\\
	&=1+\frac{g_{\theta}'(\theta)}{g_{\theta}(\theta)}n^{-\frac{1}{2}}\rho+\frac{f_{\theta}'''(\theta)}{6}n^{-\frac{1}{2}}\rho^3+L(\theta,n,\rho),
\end{align*}
where
\begin{align}\label{def-L}
	\begin{split}
		L(\theta,n,\rho):&=\frac{B_{\theta,\rho}}{g_{\theta}(\theta)}n^{-1}\rho^2+\frac{f_{\theta}'''(\theta)}{6}n^{-\frac{1}{2}}\rho^3\Big[\frac{g_{\theta}'(\theta)}{g_{\theta}(\theta)}n^{-\frac{1}{2}}\rho+\frac{B_{\theta,\rho}}{g_{\theta}(\theta)}n^{-1}\rho^2\Big]\\
		&+\Big(A_{\theta,\rho}n^{-1}\rho^4+\sum_{j=2}^{\infty}\frac{\big[\frac{f_{\theta}'''(\theta)}{6}n^{-\frac{1}{2}}\rho^3+A_{\theta,\rho}n^{-1}\rho^4\big]^j}{j!}\Big)\Big[1+\frac{g_{\theta}'(\theta)}{g_{\theta}(\theta)}n^{-\frac{1}{2}}\rho+\frac{B_{\theta,\rho}}{g_{\theta}(\theta)}n^{-1}\rho^2\Big].
	\end{split}
\end{align}
Therefore, we rewrite the integral \eqref{int-sec-cent} as
\begin{align}\label{int-thi-cent}
	\mathcal{I}_{\mathrm{center}}(\theta,n)=\frac{1}{\pi i}n^{-\frac{1}{2}}g_{\theta}(\theta)e^{nf_{\theta}(\theta)}\big[I_1(\theta,n)+I_2(\theta,n)+I_3(\theta,n)\big],
\end{align}
where
\begin{align}\label{def-I1}
	I_1(\theta,n):=\int_{-n^{\delta}}^{n^{\delta}}e^{\frac{f_{\theta}''(\theta)}{2}\rho^2}\mathrm{d}\rho,
\end{align}
\begin{align}\label{def-I2}
	I_2(\theta,n):=\int_{-n^{\delta}}^{n^{\delta}}e^{\frac{f_{\theta}''(\theta)}{2}\rho^2}\times\Big[\frac{g_{\theta}'(\theta)}{g_{\theta}(\theta)}n^{-\frac{1}{2}}\rho+\frac{f_{\theta}'''(\theta)}{6}n^{-\frac{1}{2}}\rho^3\Big]\mathrm{d}\rho
\end{align}
and
\begin{align}\label{def-I3}
	I_3(\theta,n):=\int_{-n^{\delta}}^{n^{\delta}}e^{\frac{f_{\theta}''(\theta)}{2}\rho^2}\times L(\theta,n,\rho)\mathrm{d}\rho.
\end{align}

\subsubsection{Estimate of $I_1(\theta,n)$}
Rewrite the integral $I_1(\theta,n)$ defined in \eqref{def-I1} as
\begin{align}\label{rewriteI1}
	I_1(\theta,n)=\int_{-\infty}^{+\infty}e^{\frac{f_{\theta}''(\theta)}{2}\rho^2}\mathrm{d}\rho-2\int_{n^{\delta}}^{+\infty}e^{\frac{f_{\theta}''(\theta)}{2}\rho^2}\mathrm{d}\rho.
\end{align}
Note that
\[
\int_{-\infty}^{+\infty}e^{\frac{f_{\theta}''(\theta)}{2}\rho^2}\mathrm{d}\rho=\Big(\int_{\mathbb{R}^2}e^{\frac{f_{\theta}''(\theta)}{2}(x^2+y^2)}\mathrm{d}x\mathrm{d}y\Big)^{\frac{1}{2}}=\Big(2\pi\int_{0}^{+\infty}e^{\frac{f_{\theta}''(\theta)}{2}r^2}r\mathrm{d}r\Big)^{\frac{1}{2}}.
\]
Since $\mathrm{Re}f_{\theta}''(\theta)<0$ by Corollary~\ref{Cor-secderiv}, we have $\lim_{r\to+\infty}e^{\frac{f_{\theta}''(\theta)}{2}r^2}=0$ and hence
\[
\int_{0}^{+\infty}e^{\frac{f_{\theta}''(\theta)}{2}r^2}r\mathrm{d}r=\frac{1}{f_{\theta}''(\theta)}e^{\frac{f_{\theta}''(\theta)}{2}r^2}\Big|_{r=0}^{r=+\infty}=0-\frac{1}{f_{\theta}''(\theta)}=-\frac{1}{f_{\theta}''(\theta)}.
\]
This implies that
\begin{align}\label{cal-rewriteI1}
	\int_{-\infty}^{+\infty}e^{\frac{f_{\theta}''(\theta)}{2}\rho^2}\mathrm{d}\rho=i\sqrt{\frac{2\pi}{f_{\theta}''(\theta)}}.
\end{align}
Moreover, it follows from $\mathrm{Re}f_{\theta}''(\theta)<0$ that
\begin{align}\label{cal-rewriteI1remain}
	\Big|\int_{n^{\delta}}^{+\infty}e^{\frac{f_{\theta}''(\theta)}{2}\rho^2}\mathrm{d}\rho\Big|\leq\int_{n^{\delta}}^{+\infty}e^{\frac{\mathrm{Re}f_{\theta}''(\theta)}{2}\rho^2}\mathrm{d}\rho\leq\frac{1}{n^{\delta}}\int_{n^{\delta}}^{+\infty}e^{\frac{\mathrm{Re}f_{\theta}''(\theta)}{2}\rho^2}\rho\mathrm{d}\rho=O\Big(\frac{e^{\frac{\mathrm{Re}f_{\theta}''(\theta)}{2}n^{2\delta}}}{n^{\delta}}\Big)=O(n^{-1}).
\end{align}
Hence by \eqref{rewriteI1}, \eqref{cal-rewriteI1} and \eqref{cal-rewriteI1remain}, we obtain
\begin{align}\label{resultI1}
	I_1(\theta,n)=i\sqrt{\frac{2\pi}{f_{\theta}''(\theta)}}+O(n^{-1}).
\end{align}

\subsubsection{Estimate of $I_2(\theta,n)$}
Noticing that the integrand in the integral $I_2(\theta,n)$ defined in \eqref{def-I2} is odd, we directly have
\begin{align}\label{resultI2}
	I_2(\theta,n)=0.
\end{align}

\subsubsection{Estimate of $I_3(\theta,n)$}
Recall the definition of $I_3(\theta,n)$ in \eqref{def-I3}:
\begin{align}\label{rewriteI3}
	I_3(\theta,n)=\int_{-n^{\delta}}^{n^{\delta}}e^{\frac{f_{\theta}''(\theta)}{2}\rho^2}\times L(\theta,n,\rho)\mathrm{d}\rho
\end{align}
with $L(\theta,n,\rho)$ defined in \eqref{def-L}. Note that
\begin{align*}
	&\quad\sum_{j=2}^{\infty}\frac{\big[\frac{f_{\theta}'''(\theta)}{6}n^{-\frac{1}{2}}\rho^3+A_{\theta,\rho}n^{-1}\rho^4\big]^j}{j!}\\
	&=\Big[\frac{f_{\theta}'''(\theta)}{6}n^{-\frac{1}{2}}\rho^3+A_{\theta,\rho}n^{-1}\rho^4\Big]^2\times\sum_{j=0}^{\infty}\frac{\big[\frac{f_{\theta}'''(\theta)}{6}n^{-\frac{1}{2}}\rho^3+A_{\theta,\rho}n^{-1}\rho^4\big]^{j}}{(j+2)!}
\end{align*}
Since $\delta\in(0,1/6)$ and $\rho\in(-n^{\delta},n^{\delta})$, the function
$\frac{f_{\theta}'''(\theta)}{6}n^{-\frac{1}{2}}\rho^3+A_{\theta,\rho}n^{-1}\rho^4$ is uniformly bounded for $\theta\in[\varepsilon,\pi-\varepsilon]$ with any fixed $0<\varepsilon<\frac{\pi}{2}$. Hence the function
\[
\sum_{j=0}^{\infty}\frac{\big[\frac{f_{\theta}'''(\theta)}{6}n^{-\frac{1}{2}}\rho^3+A_{\theta,\rho}n^{-1}\rho^4\big]^{j}}{(j+2)!}
\]
is uniformly bounded, which implies that
\begin{align*}
	\sum_{j=2}^{\infty}\frac{\big[\frac{f_{\theta}'''(\theta)}{6}n^{-\frac{1}{2}}\rho^3+A_{\theta,\rho}n^{-1}\rho^4\big]^j}{j!}&=\Big[\frac{f_{\theta}'''(\theta)}{6}n^{-\frac{1}{2}}\rho^3+A_{\theta,\rho}n^{-1}\rho^4\Big]^2\times O(1)\\
	&=(|\rho|^6+|\rho|^7+|\rho|^8)\times O(n^{-1}).
\end{align*}
Therefore, from the expression of $L(\theta,n,\rho)$ in \eqref{def-L}, one can show that
\begin{align}\label{rewriteL}
	L(\theta,n,\rho)=(|\rho|^2+|\rho|^4+|\rho|^5+|\rho|^6+|\rho|^7+|\rho|^8+|\rho|^9+|\rho|^{10})\times O(n^{-1}).
\end{align}
Note that the above analysis reveals why the parameter $\delta$ cannot exceed $1/6$. It follows from \eqref{rewriteI3} and \eqref{rewriteL} that
\[
I_3(\theta,n)=\int_{-n^{\delta}}^{n^{\delta}}e^{\frac{f_{\theta}''(\theta)}{2}\rho^2}(|\rho|^2+|\rho|^4+|\rho|^5+|\rho|^6+|\rho|^7+|\rho|^8+|\rho|^9+|\rho|^{10})\mathrm{d}\rho\times O(n^{-1}).
\]
Then, since $\mathrm{Re}f_{\theta}''(\theta)<0$ by Corollary~\ref{Cor-secderiv}, we have
\begin{align}\label{resultI3}
	I_3(\theta,n)=O(1)\times O(n^{-1})=O(n^{-1}).
\end{align}

\subsubsection{Conclusion: estimate of $\mathcal{I}_{\mathrm{center}}(\theta,n)$}
Combining \eqref{int-thi-cent} with \eqref{resultI1}, \eqref{resultI2} and \eqref{resultI3}, we obtain
\begin{align*}
	\mathcal{I}_{\mathrm{center}}(\theta,n)&=\frac{1}{\pi i}n^{-\frac{1}{2}}g_{\theta}(\theta)e^{nf_{\theta}(\theta)}\Big[i\sqrt{\frac{2\pi}{f_{\theta}''(\theta)}}+O(n^{-1})+0+O(n^{-1})\Big]\\
	&=\sqrt{2}\pi^{-\frac{1}{2}}n^{-\frac{1}{2}}\frac{g_{\theta}(\theta)}{\sqrt{f_{\theta}''(\theta)}}e^{nf_{\theta}(\theta)}\big[1+O(n^{-1})\big].
\end{align*}
This completes the proof of Lemma~\ref{est-int-cen}.

\subsection{Proof of Lemma~\ref{est-int-r+l}}\label{prf-est-int-r+l-lem}
Without loss of generality, we estimate only the integral $\mathcal{I}_{\mathrm{left}}(\theta,n)$, since the treatment of $\mathcal{I}_{\mathrm{right}}(\theta,n)$ is similar. Recall the definition of $\mathcal{I}_{\mathrm{left}}(\theta,n)$ from the first formula in \eqref{def-int-r+l}:
\begin{align}\label{copy-int-left}
	\mathcal{I}_{\mathrm{left}}(\theta,n)=\frac{1}{\pi i}\int_{0}^{\theta-n^{-\frac{1}{2}+\delta}}e^{nf_{\theta}(\varphi)}g_{\theta}(\varphi)\mathrm{d}\varphi.
\end{align}
Noticing the definition of $T_{\theta}(\varphi)$ in \eqref{def-T}, it follows from \eqref{copy-int-left} that
\[
|\mathcal{I}_{\mathrm{left}}(\theta,n)|\leq\frac{1}{\pi}\int_{0}^{\theta-n^{-\frac{1}{2}+\delta}}\big|e^{nf_{\theta}(\varphi)}\big|\cdot|g_{\theta}(\varphi)|\mathrm{d}\varphi=\frac{1}{\pi}\int_{0}^{\theta-n^{-\frac{1}{2}+\delta}}\big[T_{\theta}(\varphi)\big]^{\frac{n}{2}}\cdot|g_{\theta}(\varphi)|\mathrm{d}\varphi.
\]
Since $T_{\theta}(\varphi)$ is strictly increasing on $(0,\theta-n^{-\frac{1}{2}+\delta})$ by Lemma~\ref{lem-mono-T}, we obtain
\begin{align}\label{est-T-Ileft}
	\begin{split}
		|\mathcal{I}_{\mathrm{left}}(\theta,n)|&\leq\big[T_{\theta}(\theta-n^{-\frac{1}{2}+\delta})\big]^{\frac{n}{2}}\frac{1}{\pi}\int_{0}^{\theta-n^{-\frac{1}{2}+\delta}}|g_{\theta}(\varphi)|\mathrm{d}\varphi\\
		&\leq\big[T_{\theta}(\theta-n^{-\frac{1}{2}+\delta})\big]^{\frac{n}{2}}\frac{1}{\pi}\int_{0}^{\theta}|g_{\theta}(\varphi)|\mathrm{d}\varphi\\
		&=\big[T_{\theta}(\theta-n^{-\frac{1}{2}+\delta})\big]^{\frac{n}{2}}\times O(1).
	\end{split}
\end{align}

Using again the definition of $T_{\theta}(\varphi)$ in \eqref{def-T}, we have
\begin{align}\label{usingagaindefT}
	\big[T_{\theta}(\theta-n^{-\frac{1}{2}+\delta})\big]^{\frac{n}{2}}=e^{n\mathrm{Re}f_{\theta}(\theta-n^{-\frac{1}{2}+\delta})}.
\end{align}
By Lemma~\ref{lem-saddlepoint}, $\mathrm{Re}f_{\theta}'(\theta)=0$. Expanding $\mathrm{Re}f_{\theta}(\theta-n^{-\frac{1}{2}+\delta})$ around $\theta$ gives
\begin{align}\label{expan-f-again}
	\mathrm{Re}f_{\theta}(\theta-n^{-\frac{1}{2}+\delta})=\mathrm{Re}f_{\theta}(\theta)+\frac{\mathrm{Re}f_{\theta}''(\theta)}{2}n^{-1+2\delta}-\frac{\mathrm{Re}f_{\theta}'''(\phi_{\theta,n})}{6}n^{-\frac{3}{2}+3\delta}
\end{align}
with $\phi_{\theta,n}\in(\theta-n^{-\frac{1}{2}+\delta},\theta)$. Hence, by \eqref{usingagaindefT} and \eqref{expan-f-again}, we obtain
\[
\big[T_{\theta}(\theta-n^{-\frac{1}{2}+\delta})\big]^{\frac{n}{2}}=e^{n\mathrm{Re}f_{\theta}(\theta)}\cdot e^{\frac{\mathrm{Re}f_{\theta}''(\theta)}{2}n^{2\delta}}\cdot e^{-\frac{\mathrm{Re}f_{\theta}'''(\phi_{\theta,n})}{6}n^{-\frac{1}{2}+3\delta}}.
\]
Since $\delta\in(0,1/6)$ and $\mathrm{Re}f_{\theta}'''(\phi_{\theta,n})$ is uniformly bounded as $n\to\infty$ for $\theta\in[\varepsilon,\pi-\varepsilon]$ with any fixed $0<\varepsilon<\frac{\pi}{2}$, we have
\[
e^{-\frac{\mathrm{Re}f_{\theta}'''(\phi_{\theta,n})}{6}n^{-\frac{1}{2}+3\delta}}=O(1).
\]
Note that the above analysis also reveals why the parameter $\delta$ cannot exceed $1/6$. This yields
\[
\big[T_{\theta}(\theta-n^{-\frac{1}{2}+\delta})\big]^{\frac{n}{2}}=e^{n\mathrm{Re}f_{\theta}(\theta)}\cdot e^{\frac{\mathrm{Re}f_{\theta}''(\theta)}{2}n^{2\delta}}\cdot O(1)=n^{-\frac{1}{2}}e^{nf_{\theta}(\theta)}\cdot O\Big(n^{\frac{1}{2}}e^{\frac{\mathrm{Re}f_{\theta}''(\theta)}{2}n^{2\delta}}\Big).
\]
Then, since $\mathrm{Re}f_{\theta}''(\theta)<0$ by Corollary~\ref{Cor-secderiv}, we have
\begin{align}\label{final-T}
	\big[T_{\theta}(\theta-n^{-\frac{1}{2}+\delta})\big]^{\frac{n}{2}}=n^{-\frac{1}{2}}e^{nf_{\theta}(\theta)}\cdot o(n^{-1})=\sqrt{2}\pi^{-\frac{1}{2}}n^{-\frac{1}{2}}\frac{g_{\theta}(\theta)}{\sqrt{f_{\theta}''(\theta)}}e^{nf_{\theta}(\theta)}\cdot o(n^{-1}).
\end{align}

Finally, by \eqref{est-T-Ileft} and \eqref{final-T}, we obtain
\[
\mathcal{I}_{\mathrm{left}}(\theta,n)=\sqrt{2}\pi^{-\frac{1}{2}}n^{-\frac{1}{2}}\frac{g_{\theta}(\theta)}{\sqrt{f_{\theta}''(\theta)}}e^{nf_{\theta}(\theta)}\cdot o(n^{-1}).
\]
Similarly, one can show that
\[
\mathcal{I}_{\mathrm{right}}(\theta,n)=\sqrt{2}\pi^{-\frac{1}{2}}n^{-\frac{1}{2}}\frac{g_{\theta}(\theta)}{\sqrt{f_{\theta}''(\theta)}}e^{nf_{\theta}(\theta)}\cdot o(n^{-1}).
\]
This completes the proof of Lemma~\ref{est-int-r+l}.

\section{Proof of Claim~\ref{clm-anaT'}}\label{prf-clm-anaT'}
In this section, we are going to prove Claim~\ref{clm-anaT'}. Namely, we will show that $w(\varphi)\neq0$ when $u(\varphi)=0$, and that $\frac{w(\varphi)}{u(\varphi)s(\varphi)}\notin(0,1)$ when $u(\varphi)\neq0$.

Recall the definitions of $u(\varphi)$ and $w(\varphi)$ in \eqref{def-uvw}:
\begin{align}\label{redef-uw}
	u(\varphi)=(kl)'(\varphi)=k(\varphi)l'(\varphi)+k'(\varphi)l(\varphi)\quad\text{and}\quad w(\varphi)=(k^2l')(\varphi)=k^2(\varphi)l'(\varphi)
\end{align}
with $k(\varphi)$ and $l(\varphi)$ defined in \eqref{def-klrs}:
\begin{align}\label{redef-kl}
	k(\varphi)=\big[\Theta_{\frac{1}{\alpha}}(\varphi)\big]^{2\alpha}\quad\text{and}\quad l(\varphi)=1+\big[\Theta_{\frac{1}{\alpha}}(\varphi)\big]^2-2\Theta_{\frac{1}{\alpha}}(\varphi)\cos\frac{\pi-\varphi}{1+\alpha}.
\end{align}
Also recall the definition of $s(\varphi)$ from the last formula in \eqref{def-klrs}:
\begin{align}\label{redef-s}
	s(\varphi)=\big[\Theta_{\frac{1}{\alpha}}(\varphi)\big]^{\alpha}\,\Theta_{\alpha}(\varphi).
\end{align}
Here we repeat once again the definitions of 
$\Theta_{\frac{1}{\alpha}}(\varphi)$ and $\Theta_{\alpha}(\varphi)$:
\begin{align}\label{redef-Tha}
	\Theta_{\frac{1}{\alpha}}(\varphi)=\frac{\sin\varphi}{(1+\frac{1}{\alpha})\sin\frac{\pi-\varphi}{1+\frac{1}{\alpha}}}\quad\text{and}\quad\Theta_{\alpha}(\varphi)=\frac{\sin\varphi}{(1+\alpha)\sin\frac{\pi-\varphi}{1+\alpha}}.
\end{align}

\subsection{Auxiliary function $d(\varphi)$}\label{Sec-Auxilifunc-d}
To facilitate the subsequent expressions for $u(\varphi)$ and $w(\varphi)$, we first introduce an auxiliary function 
$d(\varphi)$:
\begin{align}\label{def-d}
	d(\varphi):=\frac{(1+\alpha)\cos\varphi\sin\frac{\pi-\varphi}{1+\frac{1}{\alpha}}+\alpha\sin\varphi\cos\frac{\pi-\varphi}{1+\frac{1}{\alpha}}}{\sin\varphi\sin\frac{\pi-\varphi}{1+\frac{1}{\alpha}}},\quad\varphi\in(0,\pi).
\end{align}

\begin{lemma}\label{lem-d}
	For any fixed $\alpha>0$, the function $d(\varphi)$ is strictly decreasing on $(0,\pi)$ with \[d(0^+)=+\infty\quad\text{and}\quad d(\pi^-)=0.\] Consequently, there exists a unique $\varphi_0\in(0,\pi)$ such that $d(\varphi_0)=1$.
\end{lemma}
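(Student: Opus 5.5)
Set $\beta:=\frac{1}{1+\frac{1}{\alpha}}=\frac{\alpha}{1+\alpha}\in(0,1)$ and $\psi:=\beta(\pi-\varphi)\in(0,\beta\pi)\subset(0,\pi)$, so that $\sin\psi>0$ on $(0,\pi)$. Dividing numerator and denominator of \eqref{def-d} gives the cleaner form
\[
d(\varphi)=(1+\alpha)\cot\varphi+\alpha\cot\psi,\qquad \psi=\tfrac{\alpha}{1+\alpha}(\pi-\varphi).
\]
The plan is to obtain monotonicity from the sign of $d'$ and the boundary values from a direct limit computation.

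\textbf{Monotonicity.} Differentiating, with $\psi'=-\beta$,
\[
d'(\varphi)=-\frac{1+\alpha}{\sin^2\varphi}+\frac{\alpha\beta}{\sin^2\psi}=-\frac{1+\alpha}{\sin^2\varphi}+\frac{\alpha^2}{(1+\alpha)\sin^2\psi}.
\]
So $d'<0$ is equivalent to $(1+\alpha)^2\sin^2\psi>\alpha^2\sin^2\varphi$, i.e.\ (both sines positive) $\sin\psi>\beta\sin\varphi$. Now $\sin\varphi=\sin(\pi-\varphi)$, so with $y:=\pi-\varphi\in(0,\pi)$ the condition becomes
\[
\sin(\beta y)>\beta\sin y\qquad\text{for } y\in(0,\pi),\ \beta\in(0,1).
\]
This is a concavity inequality. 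Set $h(y)=\sin(\beta y)-\beta\sin y$. Then $h(0)=0$ and $h'(y)=\beta[\cos(\beta y)-\cos y]>0$ on $(0,\pi)$, because $0<\beta y<y<\pi$ and cosine is strictly decreasing on $[0,\pi]$. Hence $h>0$, so $d'<0$ and $d$ is strictly decreasing.

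\textbf{Limits.} As $\varphi\to0^+$, $(1+\alpha)\cot\varphi\to+\infty$, while $\psi\to\beta\pi\in(0,\pi)$ keeps $\cot\psi$ bounded. Thus $d(0^+)=+\infty$.

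As $\varphi\to\pi^-$, write $y=\pi-\varphi\to0^+$. Then $\cot\varphi=-\cot y=-\frac{1}{y}+O(y)$ and $\cot\psi=\cot(\beta y)=\frac{1}{\beta y}+O(y)$. Therefore
\[
d=-\frac{1+\alpha}{y}+\frac{\alpha}{\beta y}+O(y)=-\frac{1+\alpha}{y}+\frac{1+\alpha}{y}+O(y)\to0 .
\]
This cancellation of the $1/y$ terms is the only delicate point, and it is exactly where $\alpha/\beta=1+\alpha$ is used.

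\textbf{Conclusion.} $d$ is continuous and strictly decreasing on $(0,\pi)$, and its range is $(0,+\infty)$, which contains $1$. By the intermediate value theorem and strict monotonicity, there is a unique $\varphi_0\in(0,\pi)$ with $d(\varphi_0)=1$. No restriction on $\alpha>0$ is needed anywhere in this argument.
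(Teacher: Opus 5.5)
Your proof is correct and follows the paper's route: the same rewriting $d=(1+\alpha)\cot\varphi+\alpha\cot\psi$, the same derivative, and the same reduction of $d'<0$ to $\sin\psi>\frac{\alpha}{1+\alpha}\sin\varphi$, which is exactly $\Theta_{\frac{1}{\alpha}}(\varphi)<1$. The only difference is that the paper imports this bound from the monotonicity argument in the proof of Lemma~\ref{lem-xtha} and obtains the endpoint limits by L'H\^opital's rule, whereas you prove the bound directly via $\sin(\beta y)-\beta\sin y>0$ and get the limits from the expansions of $\cot$, which makes your argument self-contained.
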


\begin{proof}
Rewrite $d(\varphi)$ defined in \eqref{def-d} in the form
\[
d(\varphi)=(1+\alpha)\cot\varphi+\alpha\cot\frac{\pi-\varphi}{1+\frac{1}{\alpha}}.
\]
Then the derivative of $d(\varphi)$ is
\[
d'(\varphi)=-(1+\alpha)\csc^2\varphi+\frac{\alpha^2}{1+\alpha}\csc^2\frac{\pi-\varphi}{1+\frac{1}{\alpha}}=-\frac{(1+\alpha)^2\sin^2\frac{\pi-\varphi}{1+\frac{1}{\alpha}}-\alpha^2\sin^2\varphi}{(1+\alpha)\sin^2\varphi\sin^2\frac{\pi-\varphi}{1+\frac{1}{\alpha}}},
\]
which can be rearranged as
\begin{align}\label{simpli-iden4}
	d'(\varphi)=-\frac{1+\alpha}{\sin^2\varphi}\Big[1-\Big(\frac{\sin\varphi}{(1+\frac{1}{\alpha})\sin\frac{\pi-\varphi}{1+\frac{1}{\alpha}}}\Big)^2\Big]=-\frac{1+\alpha}{\sin^2\varphi}\Big(1-\big[\Theta_{\frac{1}{\alpha}}(\varphi)\big]^{2}\Big).
\end{align}
It follows from the proof of Lemma~\ref{lem-xtha} that $\Theta_{\frac{1}{\alpha}}(\varphi)\in(0,1)$ for all $\varphi\in(0,\pi)$. Hence $d'(\varphi)<0$ for all $\varphi\in(0,\pi)$. Thus the function $d(\varphi)$ is strictly decreasing on $(0,\pi)$. In addition, the facts $d(0^+)=+\infty$ and $d(\pi^-)=0$ can be easily verified from \eqref{def-d} by L'Hôpital's rule.
\end{proof}

\subsection{Expression for $u(\varphi)$}
By \eqref{redef-kl}, we have
\[
k'(\varphi)=2\alpha\big[\Theta_{\frac{1}{\alpha}}(\varphi)\big]^{2\alpha-1}\,\Theta_{\frac{1}{\alpha}}'(\varphi)
\]
and
\begin{align}\label{cal-l'}
	l'(\varphi)=2\Theta_{\frac{1}{\alpha}}'(\varphi)\Big[\Theta_{\frac{1}{\alpha}}(\varphi)-\cos\frac{\pi-\varphi}{1+\alpha}\Big]-\frac{2}{1+\alpha}\Theta_{\frac{1}{\alpha}}(\varphi)\sin\frac{\pi-\varphi}{1+\alpha}.
\end{align}
Hence, after a simple calculation and simplification for the first formula in \eqref{redef-uw}, we get
\begin{align}\label{cal-u}
	\begin{split}
		u(\varphi)&=2\big[\Theta_{\frac{1}{\alpha}}(\varphi)\big]^{2\alpha-1}\,\Theta_{\frac{1}{\alpha}}'(\varphi)\Big\{(1+\alpha)\big[\Theta_{\frac{1}{\alpha}}(\varphi)\big]^{2}-(1+2\alpha)\Theta_{\frac{1}{\alpha}}(\varphi)\cos\frac{\pi-\varphi}{1+\alpha}+\alpha\Big\}\\
		&\quad-\frac{2}{1+\alpha}\big[\Theta_{\frac{1}{\alpha}}(\varphi)\big]^{2\alpha+1}\sin\frac{\pi-\varphi}{1+\alpha}.
	\end{split}
\end{align}
We shall make use of the following simplification identity:
\begin{align}\label{simpli-iden}
	(1+\alpha)\big[\Theta_{\frac{1}{\alpha}}(\varphi)\big]^{2}-(1+2\alpha)\Theta_{\frac{1}{\alpha}}(\varphi)\cos\frac{\pi-\varphi}{1+\alpha}+\alpha=(1+\alpha)\Theta_{\frac{1}{\alpha}}'(\varphi)\sin\frac{\pi-\varphi}{1+\alpha}.
\end{align}
Here we leave the verification of identity \eqref{simpli-iden} to Appendix~\ref{prf-simpli-iden}. Thus by \eqref{cal-u} and identity \eqref{simpli-iden}, we obtain
\begin{align}\label{cal-uagain}
	\begin{split}
		u(\varphi)&=2(1+\alpha)\big[\Theta_{\frac{1}{\alpha}}(\varphi)\big]^{2\alpha-1}\big[\Theta_{\frac{1}{\alpha}}'(\varphi)\big]^2\sin\frac{\pi-\varphi}{1+\alpha}-\frac{2}{1+\alpha}\big[\Theta_{\frac{1}{\alpha}}(\varphi)\big]^{2\alpha+1}\sin\frac{\pi-\varphi}{1+\alpha}\\
		&=\frac{2\sin\frac{\pi-\varphi}{1+\alpha}}{1+\alpha}\big[\Theta_{\frac{1}{\alpha}}(\varphi)\big]^{2\alpha+1}\Bigg\{\frac{(1+\alpha)^2[\Theta_{\frac{1}{\alpha}}'(\varphi)]^2}{[\Theta_{\frac{1}{\alpha}}(\varphi)]^2}-1\Bigg\}.
	\end{split}
\end{align}

Similar to the simple calculation of $\Theta_{\alpha}'(\theta)$ in the proof of Lemma~\ref{lem-xtha}, one can show that
\begin{align}\label{cal-der-Tha-phi}
	\Theta_{\frac{1}{\alpha}}'(\varphi)=\frac{(1+\frac{1}{\alpha})\cos\varphi\sin\frac{\pi-\varphi}{1+\frac{1}{\alpha}}+\sin\varphi\cos\frac{\pi-\varphi}{1+\frac{1}{\alpha}}}{(1+\frac{1}{\alpha})^2\sin^2\frac{\pi-\varphi}{1+\frac{1}{\alpha}}}.
\end{align}
Then, by the expressions of $d(\varphi)$ in \eqref{def-d} and $\Theta_{\frac{1}{\alpha}}(\varphi)$ in \eqref{redef-Tha}, a simple simplification yields
\begin{align}\label{cal-Tha'}
	\Theta_{\frac{1}{\alpha}}'(\varphi)=\frac{d(\varphi)\Theta_{\frac{1}{\alpha}}(\varphi)}{1+\alpha}.
\end{align}
Hence by \eqref{cal-Tha'}, we can rewrite \eqref{cal-uagain} as
\begin{align}\label{exp-finalu}
	u(\varphi)=\frac{2\sin\frac{\pi-\varphi}{1+\alpha}}{1+\alpha}\big[\Theta_{\frac{1}{\alpha}}(\varphi)\big]^{2\alpha+1}\big[d^2(\varphi)-1\big].
\end{align}

\subsection{Expression for $w(\varphi)$}
For the second formula in \eqref{redef-uw}, by \eqref{redef-kl} and \eqref{cal-l'}, we have
\[
w(\varphi)=\big[\Theta_{\frac{1}{\alpha}}(\varphi)\big]^{4\alpha}\Big\{2\Theta_{\frac{1}{\alpha}}'(\varphi)\Big[\Theta_{\frac{1}{\alpha}}(\varphi)-\cos\frac{\pi-\varphi}{1+\alpha}\Big]-\frac{2}{1+\alpha}\Theta_{\frac{1}{\alpha}}(\varphi)\sin\frac{\pi-\varphi}{1+\alpha}\Big\}.
\]
Using \eqref{cal-Tha'} and after a simple simplification, we obtain
\begin{align}\label{cal-w}
	\begin{split}
		w(\varphi)&=\frac{2}{1+\alpha}\big[\Theta_{\frac{1}{\alpha}}(\varphi)\big]^{4\alpha+1}\Big\{d(\varphi)\Big[\Theta_{\frac{1}{\alpha}}(\varphi)-\cos\frac{\pi-\varphi}{1+\alpha}\Big]-\sin\frac{\pi-\varphi}{1+\alpha}\Big\}\\
		&=\frac{2\sin\varphi}{(1+\alpha)^2}\big[\Theta_{\frac{1}{\alpha}}(\varphi)\big]^{4\alpha+1}\Big\{d(\varphi)\frac{1+\alpha}{\sin\varphi}\Big[\Theta_{\frac{1}{\alpha}}(\varphi)-\cos\frac{\pi-\varphi}{1+\alpha}\Big]-\frac{(1+\alpha)\sin\frac{\pi-\varphi}{1+\alpha}}{\sin\varphi}\Big\}.
	\end{split}
\end{align}
The following two simplification identities will be used here:
\begin{align}\label{simpli-iden2}
	\frac{1+\alpha}{\sin\varphi}\Big[\Theta_{\frac{1}{\alpha}}(\varphi)-\cos\frac{\pi-\varphi}{1+\alpha}\Big]=d(\varphi)\cos\frac{\pi-\varphi}{1+\frac{1}{\alpha}}-\sin\frac{\pi-\varphi}{1+\frac{1}{\alpha}},
\end{align}
\begin{align}\label{simpli-iden3}
	\frac{(1+\alpha)\sin\frac{\pi-\varphi}{1+\alpha}}{\sin\varphi}=d(\varphi)\sin\frac{\pi-\varphi}{1+\frac{1}{\alpha}}+\cos\frac{\pi-\varphi}{1+\frac{1}{\alpha}}.
\end{align}
We leave the verification of identities \eqref{simpli-iden2} and \eqref{simpli-iden3} to Appendices~\ref{prf-simpli-iden2} and \ref{prf-simpli-iden3}, respectively. Now combining \eqref{cal-w} with identities \eqref{simpli-iden2} and \eqref{simpli-iden3}, after a simple calculation, we obtain
\begin{align}\label{exp-finalw}
	w(\varphi)=\frac{2\sin\varphi}{(1+\alpha)^2}\big[\Theta_{\frac{1}{\alpha}}(\varphi)\big]^{4\alpha+1}\Big\{\big[d^2(\varphi)-1\big]\cos\frac{\pi-\varphi}{1+\frac{1}{\alpha}}-2d(\varphi)\sin\frac{\pi-\varphi}{1+\frac{1}{\alpha}}\Big\}.
\end{align}

\subsection{The case $u(\varphi)=0$}\label{Sec-uvarphi0}
For the equation $u(\varphi)=0$, examining formula \eqref{exp-finalu}, we know that $d^2(\varphi)=1$. Since by Lemma~\ref{lem-d} we have $d(\varphi)\in(0,+\infty)$, it follows that $d(\varphi)=1$ and hence $\varphi=\varphi_0$ with $d(\varphi_0)=1$.

Therefore, in the case when $u(\varphi)=0$, by \eqref{exp-finalw}, we have
\[
w(\varphi)=w(\varphi_0)=-\frac{4\sin\varphi_0\sin\frac{\pi-\varphi_0}{1+\frac{1}{\alpha}}}{(1+\alpha)^2}\big[\Theta_{\frac{1}{\alpha}}(\varphi_0)\big]^{4\alpha+1}<0.
\]
This proves the first statement of Claim~\ref{clm-anaT'}.

\subsection{The case $u(\varphi)\neq0$}\label{Sec-uvarphineq0}
Since $u(\varphi)\neq0$, the above analysis in Section~\ref{Sec-uvarphi0} implies $\varphi\neq\varphi_0$. Furthermore, by Lemma~\ref{lem-d}, we have $d(\varphi)>1$ for $\varphi\in(0,\varphi_0)$ and $d(\varphi)<1$ for $\varphi\in(\varphi_0,\pi)$. Let us consider the function
\begin{align}\label{def-h}
	h(\varphi):=\frac{w(\varphi)}{u(\varphi)s(\varphi)},\quad\varphi\in(0,\varphi_0)\cup(\varphi_0,\pi).
\end{align}
We are going to show that $h(\varphi)\notin(0,1)$.

By \eqref{redef-s}, \eqref{exp-finalu} and \eqref{exp-finalw}, a simple calculation yields
\begin{align}\label{exp-h}
	h(\varphi)=\big[\Theta_{\frac{1}{\alpha}}(\varphi)\big]^{\alpha}\Big[\cos\frac{\pi-\varphi}{1+\frac{1}{\alpha}}-\frac{2d(\varphi)}{d^2(\varphi)-1}\sin\frac{\pi-\varphi}{1+\frac{1}{\alpha}}\Big],\quad\varphi\in(0,\varphi_0)\cup(\varphi_0,\pi).
\end{align}
Moreover, using the facts $\Theta_{\frac{1}{\alpha}}(0^+)=0$, $\Theta_{\frac{1}{\alpha}}(\pi^-)=1$, and noticing that
\[
\lim_{\varphi\to\varphi_0^-}\frac{1}{d^2(\varphi)-1}=+\infty,\quad \lim_{\varphi\to\varphi_0^+}\frac{1}{d^2(\varphi)-1}=-\infty,
\]
one can show that
\begin{align}\label{fac-h-0varphipi}
	h(0^+)=0,\quad h(\varphi_0^-)=-\infty,\quad h(\varphi_0^+)=+\infty,\quad h(\pi^-)=1.
\end{align}

\subsubsection{Expression for $h'(\varphi)$}
The derivative of $h(\varphi)$ is
\begin{align*}
	h&'(\varphi)=\alpha\big[\Theta_{\frac{1}{\alpha}}(\varphi)\big]^{\alpha-1}\,\Theta_{\frac{1}{\alpha}}'(\varphi)\Big[\cos\frac{\pi-\varphi}{1+\frac{1}{\alpha}}-\frac{2d(\varphi)}{d^2(\varphi)-1}\sin\frac{\pi-\varphi}{1+\frac{1}{\alpha}}\Big]\\
	&+\big[\Theta_{\frac{1}{\alpha}}(\varphi)\big]^{\alpha}\Big[\frac{\alpha}{1+\alpha}\sin\frac{\pi-\varphi}{1+\frac{1}{\alpha}}+\frac{\alpha}{1+\alpha}\cdot\frac{2d(\varphi)}{d^2(\varphi)-1}\cos\frac{\pi-\varphi}{1+\frac{1}{\alpha}}+\frac{2d'(\varphi)[d^2(\varphi)+1]}{[d^2(\varphi)-1]^2}\sin\frac{\pi-\varphi}{1+\frac{1}{\alpha}}\Big].
\end{align*}
Using the identity \eqref{cal-Tha'}:
\[
\Theta_{\frac{1}{\alpha}}'(\varphi)=\frac{d(\varphi)\Theta_{\frac{1}{\alpha}}(\varphi)}{1+\alpha},
\]
and by a simple rearrangement, we obtain
\[
h'(\varphi)=\frac{\alpha}{1+\alpha}\cdot\frac{d^2(\varphi)+1}{d^2(\varphi)-1}\big[\Theta_{\frac{1}{\alpha}}(\varphi)\big]^{\alpha}\Big[d(\varphi)\cos\frac{\pi-\varphi}{1+\frac{1}{\alpha}}-\sin\frac{\pi-\varphi}{1+\frac{1}{\alpha}}+\frac{1+\alpha}{\alpha}\cdot\frac{2d'(\varphi)}{d^2(\varphi)-1}\sin\frac{\pi-\varphi}{1+\frac{1}{\alpha}}\Big].
\]
Recall the identities \eqref{simpli-iden2} and \eqref{simpli-iden4}:
\[
d(\varphi)\cos\frac{\pi-\varphi}{1+\frac{1}{\alpha}}-\sin\frac{\pi-\varphi}{1+\frac{1}{\alpha}}=\frac{1+\alpha}{\sin\varphi}\Big[\Theta_{\frac{1}{\alpha}}(\varphi)-\cos\frac{\pi-\varphi}{1+\alpha}\Big]
\]
and
\[
d'(\varphi)=-\frac{1+\alpha}{\sin^2\varphi}\Big(1-\big[\Theta_{\frac{1}{\alpha}}(\varphi)\big]^{2}\Big),
\]
then one can show that
\begin{align}\label{exp-h'}
	h'(\varphi)=-\frac{\alpha}{\sin\varphi}\cdot\frac{d^2(\varphi)+1}{[d^2(\varphi)-1]^2}\big[\Theta_{\frac{1}{\alpha}}(\varphi)\big]^{\alpha-1}\Delta(\varphi)
\end{align}
with
\begin{align}\label{def-Del}
	\Delta(\varphi):=[d^2(\varphi)-1]\Theta_{\frac{1}{\alpha}}(\varphi)\Big[\cos\frac{\pi-\varphi}{1+\alpha}-\Theta_{\frac{1}{\alpha}}(\varphi)\Big]+2\Big(1-\big[\Theta_{\frac{1}{\alpha}}(\varphi)\big]^{2}\Big).
\end{align}

\subsubsection{Analysis of $h'(\varphi)$}\label{Analysisofhvarphi}
Note that
\[
1-\big[\Theta_{\frac{1}{\alpha}}(\varphi)\big]^{2}=\big[1+\Theta_{\frac{1}{\alpha}}(\varphi)\big]\big[1-\Theta_{\frac{1}{\alpha}}(\varphi)\big]\geq\big[1+\Theta_{\frac{1}{\alpha}}(\varphi)\big]\Big[\cos\frac{\pi-\varphi}{1+\alpha}-\Theta_{\frac{1}{\alpha}}(\varphi)\Big].
\]
Thus by \eqref{def-Del}, we obtain
\begin{align}\label{analy-Del}
	\Delta(\varphi)\geq\big[d^2(\varphi)\Theta_{\frac{1}{\alpha}}(\varphi)+\Theta_{\frac{1}{\alpha}}(\varphi)+2\big]\Big[\cos\frac{\pi-\varphi}{1+\alpha}-\Theta_{\frac{1}{\alpha}}(\varphi)\Big].
\end{align}

From the expression of $\Theta_{\frac{1}{\alpha}}(\varphi)$ in \eqref{redef-Tha}, we have
\[
\cos\frac{\pi-\varphi}{1+\alpha}-\Theta_{\frac{1}{\alpha}}(\varphi)=\frac{(1+\alpha)\cos\frac{\pi-\varphi}{1+\alpha}\sin\frac{\pi-\varphi}{1+\frac{1}{\alpha}}-\alpha\sin\varphi}{(1+\alpha)\sin\frac{\pi-\varphi}{1+\frac{1}{\alpha}}}.
\]
Since $\pi-\varphi=\frac{\pi-\varphi}{1+\alpha}+\frac{\pi-\varphi}{1+\frac{1}{\alpha}}$, it follows that
\[
\sin\varphi = \sin(\pi-\varphi)=\cos\frac{\pi-\varphi}{1+\alpha}\sin\frac{\pi-\varphi}{1+\frac{1}{\alpha}}+\sin\frac{\pi-\varphi}{1+\alpha}\cos\frac{\pi-\varphi}{1+\frac{1}{\alpha}},
\]
which implies
\begin{align}\label{analy-minus}
	\cos\frac{\pi-\varphi}{1+\alpha}-\Theta_{\frac{1}{\alpha}}(\varphi)=\frac{\cos\frac{\pi-\varphi}{1+\alpha}\sin\frac{\pi-\varphi}{1+\frac{1}{\alpha}}-\alpha\sin\frac{\pi-\varphi}{1+\alpha}\cos\frac{\pi-\varphi}{1+\frac{1}{\alpha}}}{(1+\alpha)\sin\frac{\pi-\varphi}{1+\frac{1}{\alpha}}}.
\end{align}
Define
\begin{align}\label{def-del}
	\lambda(\varphi):=\cos\frac{\pi-\varphi}{1+\alpha}\sin\frac{\pi-\varphi}{1+\frac{1}{\alpha}}-\alpha\sin\frac{\pi-\varphi}{1+\alpha}\cos\frac{\pi-\varphi}{1+\frac{1}{\alpha}}.
\end{align}
Then, by \eqref{analy-Del}, \eqref{analy-minus} and \eqref{def-del}, we obtain
\begin{align}\label{analy-Delagain}
	\Delta(\varphi)\geq\frac{d^2(\varphi)\Theta_{\frac{1}{\alpha}}(\varphi)+\Theta_{\frac{1}{\alpha}}(\varphi)+2}{(1+\alpha)\sin\frac{\pi-\varphi}{1+\frac{1}{\alpha}}}\,\lambda(\varphi).
\end{align}

\begin{lemma}\label{lem-del}
	Fix $\alpha\geq1$. Then $\lambda(\varphi)\geq0$ for all $\varphi\in(0,\pi)$.
\end{lemma}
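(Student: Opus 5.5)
Set $A:=\frac{\pi-\varphi}{1+\alpha}$ and $B:=\frac{\pi-\varphi}{1+\frac{1}{\alpha}}=\alpha A$, so that $A+B=\pi-\varphi\in(0,\pi)$. In particular $A\in(0,\frac{\pi}{1+\alpha})\subset(0,\frac{\pi}{2}]$ because $\alpha\geq1$. With this notation
\[
\lambda(\varphi)=\cos A\sin(\alpha A)-\alpha\sin A\cos(\alpha A),
\]
so it suffices to show that the one-variable function $\Lambda(A):=\cos A\sin(\alpha A)-\alpha\sin A\cos(\alpha A)$ is nonnegative for $A\in(0,\frac{\pi}{1+\alpha})$.

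The plan is to use $\Lambda(0)=0$ and show $\Lambda$ is nondecreasing. Differentiating, the terms $\pm\alpha\cos A\cos(\alpha A)$ cancel, leaving
\[
\Lambda'(A)=(\alpha^2-1)\sin A\sin(\alpha A).
\]
For $\alpha\geq1$ we have $\alpha^2-1\geq0$. Also $\sin A>0$ since $A\in(0,\frac{\pi}{2}]$, and $\sin(\alpha A)>0$ since $\alpha A=B\in(0,\pi)$ (indeed $B<\pi-\varphi<\pi$). Hence $\Lambda'(A)\geq0$ on the whole range, and integrating from $0$ gives $\Lambda(A)\geq\Lambda(0)=0$. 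That is, $\lambda(\varphi)\geq0$ for all $\varphi\in(0,\pi)$. When $\alpha=1$, $\Lambda\equiv0$, which is consistent with the statement's non-strict inequality.

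The only step needing care is checking that $\sin(\alpha A)$ stays positive on the full interval $A\in(0,\frac{\pi}{1+\alpha})$. This follows from $\alpha A=\frac{\alpha\pi}{1+\alpha}-\frac{\alpha\varphi}{1+\alpha}<\pi$. The same computation explains why the argument fails for $0<\alpha<1$: there $\alpha^2-1<0$ forces $\Lambda'<0$ near $A=0$, so $\Lambda<0$ for small $A$. This matches the failure mentioned in Remark~\ref{rem-3}.
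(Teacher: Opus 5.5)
Your proof is correct and is essentially the paper's argument. The paper differentiates directly in $\varphi$, obtaining $\lambda'(\varphi)=(1-\alpha)\sin\frac{\pi-\varphi}{1+\alpha}\sin\frac{\pi-\varphi}{1+\frac{1}{\alpha}}\le 0$, and then uses $\lambda(\pi^-)=0$; this is exactly your $\Lambda'(A)=(\alpha^2-1)\sin A\sin(\alpha A)\ge 0$ with $\Lambda(0)=0$, rewritten through the change of variable $A=\frac{\pi-\varphi}{1+\alpha}$, for which $\mathrm{d}A/\mathrm{d}\varphi=-\frac{1}{1+\alpha}$.
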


\begin{proof}
	A straightforward calculation for \eqref{def-del} gives the derivative of $\lambda(\varphi)$:
	\begin{align}\label{cal-del'}
		\lambda'(\varphi)=(1-\alpha)\sin\frac{\pi-\varphi}{1+\alpha}\sin\frac{\pi-\varphi}{1+\frac{1}{\alpha}}.
	\end{align}
	Since we now assume that $\alpha\geq1$, it follows that $\lambda'(\varphi)\leq0$ on $(0,\pi)$. Moreover, note that $\lambda(\pi^-)=0$. Hence $\lambda(\varphi)\geq0$ for all $\varphi\in(0,\pi)$.
\end{proof}

\begin{remark}\label{rem-4}
	Lemma~\ref{lem-del} does not hold when $0<\alpha<1$. Indeed, if $0<\alpha<1$, then by \eqref{cal-del'}, we have $\lambda'(\varphi)>0$ on $(0,\pi)$. Hence it follows from $\lambda(\pi^-)=0$ that $\lambda(\varphi)<0$ for all $\varphi\in(0,\pi)$.
\end{remark}

Now by \eqref{analy-Delagain}, it follows from Lemma~\ref{lem-del} that $\Delta(\varphi)\geq0$ on $(0,\pi)$. Hence, based on the expression of $h'(\varphi)$ in \eqref{exp-h'}, we have
\begin{align}\label{analy-h'}
	h'(\varphi)\leq0,\quad\varphi\in(0,\varphi_0)\cup(\varphi_0,\pi).
\end{align}

\subsubsection{Conclusion: range of $h(\varphi)$}
By \eqref{analy-h'}, we know that when $\alpha\geq1$, the function $h(\varphi)$ is decreasing (not necessarily strictly) on $(0,\varphi_0)$ and $(\varphi_0,\pi)$. Recall the facts of $h(\varphi)$ stated in \eqref{fac-h-0varphipi}:
\[
h(0^+)=0,\quad h(\varphi_0^-)=-\infty,\quad h(\varphi_0^+)=+\infty,\quad h(\pi^-)=1.
\]
This implies that 
\[
h(\varphi)\notin(0,1),\quad\varphi\in(0,\varphi_0)\cup(\varphi_0,\pi).
\]

Therefore, by the definition of $h(\varphi)$ in \eqref{def-h}, we conclude that $\frac{w(\varphi)}{u(\varphi)s(\varphi)}\notin(0,1)$ in the case $u(\varphi)\neq0$. This completes the second statement of Claim~\ref{clm-anaT'}.

\subsection{Proof of Remark~\ref{rem-3}}\label{proofofremark3}
The above Sections~\ref{Sec-Auxilifunc-d}--\ref{Sec-uvarphineq0} complete the proof of Claim~\ref{clm-anaT'} under the condition $\alpha\geq1$, since Lemma~\ref{lem-del} in Section~\ref{Analysisofhvarphi} requires $\alpha\geq1$.

In contrast, we are going to show that Claim~\ref{clm-anaT'} fails for $0<\alpha<1$. More precisely, we will prove the statement in Remark~\ref{rem-3}: when $0<\alpha<1$, there exists a sufficiently small $\varepsilon_0>0$ such that for every $\varphi\in(0,\varepsilon_0)$,
\[
u(\varphi)>0 \quad\text{and}\quad 0<\frac{w(\varphi)}{u(\varphi)s(\varphi)}<1.
\]

Recall the expression of $u(\varphi)$ in \eqref{exp-finalu}:
\[
u(\varphi)=\frac{2\sin\frac{\pi-\varphi}{1+\alpha}}{1+\alpha}\big[\Theta_{\frac{1}{\alpha}}(\varphi)\big]^{2\alpha+1}\big[d^2(\varphi)-1\big].
\]
Note that $\sin\frac{\pi-\varphi}{1+\alpha}$ and $\Theta_{\frac{1}{\alpha}}(\varphi)$ are strictly positive for all $\varphi\in(0,\pi)$, and by Lemma~\ref{lem-d}, $d(\varphi)>1$ for $\varphi\in(0,\varphi_0)$. It follows that
\begin{align}\label{prfrem3one}
	u(\varphi)>0\quad\text{for every }\varphi\in(0,\varphi_0).
\end{align}

Recall the definition and expression of $h(\varphi)$ in \eqref{def-h} and \eqref{exp-h}:
\[
h(\varphi)=\frac{w(\varphi)}{u(\varphi)s(\varphi)}=\big[\Theta_{\frac{1}{\alpha}}(\varphi)\big]^{\alpha}\Big[\cos\frac{\pi-\varphi}{1+\frac{1}{\alpha}}-\frac{2d(\varphi)}{d^2(\varphi)-1}\sin\frac{\pi-\varphi}{1+\frac{1}{\alpha}}\Big].
\]
Note that $\Theta_{\frac{1}{\alpha}}(\varphi)$ is strictly positive for all $\varphi\in(0,\pi)$. In addition, since $d(0^+)=+\infty$ by Lemma~\ref{lem-d}, we have
\[
\lim_{\varphi\to0^+}\frac{d(\varphi)}{d^2(\varphi)-1}=\lim_{\varphi\to0^+}\frac{\frac{1}{d(\varphi)}}{1-\frac{1}{d^2(\varphi)}}=0.
\]
Now that when $0<\alpha<1$, we have $\frac{\pi}{1+\frac{1}{\alpha}}\in(0,\frac{\pi}{2})$ and hence
\[
\lim_{\varphi\to0^+}\Big[\cos\frac{\pi-\varphi}{1+\frac{1}{\alpha}}-\frac{2d(\varphi)}{d^2(\varphi)-1}\sin\frac{\pi-\varphi}{1+\frac{1}{\alpha}}\Big]=\cos\frac{\pi}{1+\frac{1}{\alpha}}>0.
\]
Thus, by continuity, $h(\varphi)$ is strictly positive for $\varphi$ sufficiently close to $0$.

On the other hand, since $h(0^+)=0$ by \eqref{fac-h-0varphipi}, there exists a sufficiently small $\varepsilon_0>0$ such that $0<h(\varphi)<1$ for every $\varphi\in(0,\varepsilon_0)$. That is,
\begin{align}\label{prfrem3two}
	0<\frac{w(\varphi)}{u(\varphi)s(\varphi)}<1\quad\text{for every }\varphi\in(0,\varepsilon_0).
\end{align}
Therefore, combining \eqref{prfrem3one} with \eqref{prfrem3two}, we have proved the statement in Remark~\ref{rem-3} for $0<\alpha<1$, which indicates that Claim~\ref{clm-anaT'} is false for $0<\alpha<1$.

\appendix
\section*{Appendix}
\renewcommand{\thesubsection}{\Alph{subsection}}
\setcounter{subsection}{0}
\renewcommand\theequation{\thesubsection.\arabic{equation}}

\subsection{Proof of identity \eqref{cal-gam}}\label{prf-gam}
Consider the hypergeometric function
\[
{}_2F_1(-r,n+a+1;a+1;z)=\sum_{s=0}^{r}(-1)^s\frac{r!}{s!(r-s)!}\frac{(n+a+1)_s}{(a+1)_s}z^s,
\]
where the notation $(x)_n$ denotes the Pochhammer symbol:
\[
(x)_n=x(x+1)\cdots(x+n-1)=\frac{\Gamma(x+n)}{\Gamma(x)}.
\]
Expressing the hypergeometric function in terms of Gamma functions and setting $z=1$, we obtain
\begin{align}\label{expre-2F1}
	{}_2F_1(-r,n+a+1;a+1;1)=\sum_{s=0}^{r}\frac{(-1)^s\,\Gamma(r+1)\Gamma(n+s+a+1)\Gamma(a+1)}{\Gamma(s+1)\Gamma(r-s+1)\Gamma(n+a+1)\Gamma(s+a+1)}.
\end{align}
Additionally, the Chu--Vandermonde identity at $z=1$ (see, e.g., \cite[Formula~(7.16)]{Ask}) yields
\begin{align}\label{expre-2F1-Vandermonde}
	{}_2F_1(-r,n+a+1;a+1;1)=\frac{(-n)_r}{(a+1)_r}=\frac{(-1)^r\,\Gamma(n+1)\Gamma(a+1)}{\Gamma(n-r+1)\Gamma(r+a+1)}.
\end{align}
Comparing \eqref{expre-2F1} with \eqref{expre-2F1-Vandermonde} implies the identity \eqref{cal-gam}:
\[
\sum_{s=0}^{r}\frac{(-1)^s\,\Gamma(n+s+a+1)}{\Gamma(n+1)\Gamma(s+1)\Gamma(r-s+1)\Gamma(s+a+1)}=\frac{(-1)^r\,\Gamma(n+a+1)}{\Gamma(n-r+1)\Gamma(r+a+1)\Gamma(r+1)}.
\]

\subsection{Proof of identity \eqref{cal-Aalp}}\label{prf-Aalp}
Recall the definitions of $\Theta_{\alpha}(\theta)$ and $\Theta_{\frac{1}{\alpha}}(\theta)$ in \eqref{def-Tha}:
\[
\Theta_{\alpha}(\theta)=\frac{\sin\theta}{(1+\alpha)\sin\frac{\pi-\theta}{1+\alpha}},\quad\Theta_{\frac{1}{\alpha}}(\theta)=\frac{\sin\theta}{(1+\frac{1}{\alpha})\sin\frac{\pi-\theta}{1+\frac{1}{\alpha}}}.
\]
Note that $\pi-\theta=\frac{\pi-\theta}{1+\alpha}+\frac{\pi-\theta}{1+\frac{1}{\alpha}}$ and hence
\[
\sin\theta=\sin(\pi-\theta)=\sin\frac{\pi-\theta}{1+\alpha}\cos\frac{\pi-\theta}{1+\frac{1}{\alpha}}+\cos\frac{\pi-\theta}{1+\alpha}\sin\frac{\pi-\theta}{1+\frac{1}{\alpha}}.
\]
It follows that
\[
\Theta_{\alpha}(\theta)=\frac{\sin\frac{\pi-\theta}{1+\alpha}\cos\frac{\pi-\theta}{1+\frac{1}{\alpha}}+\cos\frac{\pi-\theta}{1+\alpha}\sin\frac{\pi-\theta}{1+\frac{1}{\alpha}}}{(1+\alpha)\sin\frac{\pi-\theta}{1+\alpha}}
\]
and
\[
\Theta_{\frac{1}{\alpha}}(\theta)=\frac{\alpha\sin\frac{\pi-\theta}{1+\alpha}\cos\frac{\pi-\theta}{1+\frac{1}{\alpha}}+\alpha\cos\frac{\pi-\theta}{1+\alpha}\sin\frac{\pi-\theta}{1+\frac{1}{\alpha}}}{(1+\alpha)\sin\frac{\pi-\theta}{1+\frac{1}{\alpha}}}.
\]

On the one hand,
\begin{align}\label{cal-upp-YZ}
	\begin{split}
		e^{i\frac{\pi-\theta}{1+\alpha}}-\Theta_{\frac{1}{\alpha}}(\theta)&=\cos\frac{\pi-\theta}{1+\alpha}+i\sin\frac{\pi-\theta}{1+\alpha}-\frac{\alpha\sin\frac{\pi-\theta}{1+\alpha}\cos\frac{\pi-\theta}{1+\frac{1}{\alpha}}+\alpha\cos\frac{\pi-\theta}{1+\alpha}\sin\frac{\pi-\theta}{1+\frac{1}{\alpha}}}{(1+\alpha)\sin\frac{\pi-\theta}{1+\frac{1}{\alpha}}}\\
		&=\frac{\cos\frac{\pi-\theta}{1+\alpha}\sin\frac{\pi-\theta}{1+\frac{1}{\alpha}}-\alpha\sin\frac{\pi-\theta}{1+\alpha}\cos\frac{\pi-\theta}{1+\frac{1}{\alpha}}+i(1+\alpha)\sin\frac{\pi-\theta}{1+\alpha}\sin\frac{\pi-\theta}{1+\frac{1}{\alpha}}}{(1+\alpha)\sin\frac{\pi-\theta}{1+\frac{1}{\alpha}}}.
	\end{split}
\end{align}
On the other hand,
\begin{align}\label{cal-low-YZ}
	\begin{split}
		e^{-i\frac{\pi-\theta}{1+\frac{1}{\alpha}}}-\Theta_{\alpha}(\theta)&=\cos\frac{\pi-\theta}{1+\frac{1}{\alpha}}-i\sin\frac{\pi-\theta}{1+\frac{1}{\alpha}}-\frac{\sin\frac{\pi-\theta}{1+\alpha}\cos\frac{\pi-\theta}{1+\frac{1}{\alpha}}+\cos\frac{\pi-\theta}{1+\alpha}\sin\frac{\pi-\theta}{1+\frac{1}{\alpha}}}{(1+\alpha)\sin\frac{\pi-\theta}{1+\alpha}}\\
		&=\frac{\alpha\sin\frac{\pi-\theta}{1+\alpha}\cos\frac{\pi-\theta}{1+\frac{1}{\alpha}}-\cos\frac{\pi-\theta}{1+\alpha}\sin\frac{\pi-\theta}{1+\frac{1}{\alpha}}-i(1+\alpha)\sin\frac{\pi-\theta}{1+\alpha}\sin\frac{\pi-\theta}{1+\frac{1}{\alpha}}}{(1+\alpha)\sin\frac{\pi-\theta}{1+\alpha}}\\
		&=-\frac{\cos\frac{\pi-\theta}{1+\alpha}\sin\frac{\pi-\theta}{1+\frac{1}{\alpha}}-\alpha\sin\frac{\pi-\theta}{1+\alpha}\cos\frac{\pi-\theta}{1+\frac{1}{\alpha}}+i(1+\alpha)\sin\frac{\pi-\theta}{1+\alpha}\sin\frac{\pi-\theta}{1+\frac{1}{\alpha}}}{(1+\alpha)\sin\frac{\pi-\theta}{1+\alpha}}.
	\end{split}
\end{align}
Thus by \eqref{cal-upp-YZ} and \eqref{cal-low-YZ}, we obtain the desired identity \eqref{cal-Aalp}:
\[
\frac{e^{i\frac{\pi-\theta}{1+\alpha}}-\Theta_{\frac{1}{\alpha}}(\theta)}{e^{-i\frac{\pi-\theta}{1+\frac{1}{\alpha}}}-\Theta_{\alpha}(\theta)}=-\frac{\sin\frac{\pi-\theta}{1+\alpha}}{\sin\frac{\pi-\theta}{1+\frac{1}{\alpha}}}.
\]

\subsection{Proof of identity \eqref{simpli-iden}}\label{prf-simpli-iden}
For convenience, define
\begin{align}\label{def-YZ}
	Y:=\frac{\pi-\varphi}{1+\alpha}\quad\text{and}\quad Z:=\frac{\pi-\varphi}{1+\frac{1}{\alpha}}.
\end{align}
Then we have $\pi-\varphi=Y+Z$ and hence
\[
\sin\varphi=\sin(\pi-\varphi)=\sin(Y+Z)=\sin Y\cos Z+\cos Y\sin Z,
\]
\[
\cos\varphi=-\cos(\pi-\varphi)=-\cos(Y+Z)=\sin Y\sin Z-\cos Y\cos Z.
\]

We first deal with the left hand side of \eqref{simpli-iden}:
\[
(1+\alpha)\big[\Theta_{\frac{1}{\alpha}}(\varphi)\big]^{2}-(1+2\alpha)\Theta_{\frac{1}{\alpha}}(\varphi)\cos\frac{\pi-\varphi}{1+\alpha}+\alpha.
\]
By the definition of $\Theta_{\frac{1}{\alpha}}(\varphi)$ in \eqref{def-Tha-phi}, we have
\begin{align}\label{cal-100-Tha-YZ}
	\Theta_{\frac{1}{\alpha}}(\varphi)=\frac{\sin\varphi}{(1+\frac{1}{\alpha})\sin\frac{\pi-\varphi}{1+\frac{1}{\alpha}}}=\frac{\sin Y\cos Z+\cos Y\sin Z}{(1+\frac{1}{\alpha})\sin Z}=\frac{\alpha\sin Y\cos Z+\alpha\cos Y\sin Z}{(1+\alpha)\sin Z}.
\end{align}
Thus one can show that
\[
(1+\alpha)\big[\Theta_{\frac{1}{\alpha}}(\varphi)\big]^{2}=\frac{\alpha^2\sin^2Y\cos^2Z+\alpha^2\cos^2Y\sin^2Z+2\alpha^2\sin Y\cos Y\sin Z\cos Z}{(1+\alpha)\sin^2Z}
\]
and
\begin{align*}
	(1+2\alpha)\Theta_{\frac{1}{\alpha}}(\varphi)\cos\frac{\pi-\varphi}{1+\alpha}&=\frac{\alpha(1+2\alpha)\sin Y\cos Z+\alpha(1+2\alpha)\cos Y\sin Z}{(1+\alpha)\sin Z}\cos Y\\
	&=\frac{\alpha(1+2\alpha)\sin Y\cos Y\sin Z\cos Z+\alpha(1+2\alpha)\cos^2Y\sin^2Z}{(1+\alpha)\sin^2Z}.
\end{align*}
Then we get
\begin{align}\label{cal-100-left}
	\begin{split}
		&\quad(1+\alpha)\big[\Theta_{\frac{1}{\alpha}}(\varphi)\big]^{2}-(1+2\alpha)\Theta_{\frac{1}{\alpha}}(\varphi)\cos\frac{\pi-\varphi}{1+\alpha}+\alpha\\
		&=\frac{\alpha^2\sin^2Y\cos^2Z-\alpha(1+\alpha)\cos^2Y\sin^2Z-\alpha\sin Y\cos Y\sin Z\cos Z+\alpha(1+\alpha)\sin^2Z}{(1+\alpha)\sin^2Z}\\
		&=\frac{\alpha^2\sin^2Y\cos^2Z+\alpha(1+\alpha)\sin^2Y\sin^2Z-\alpha\sin Y\cos Y\sin Z\cos Z}{(1+\alpha)\sin^2Z}.
	\end{split}
\end{align}

We now turn to the right hand side of \eqref{simpli-iden}:
\[
(1+\alpha)\Theta_{\frac{1}{\alpha}}'(\varphi)\sin\frac{\pi-\varphi}{1+\alpha}.
\]
By \eqref{cal-der-Tha-phi}, we have
\begin{align*}
	\Theta_{\frac{1}{\alpha}}'(\varphi)&=\frac{(1+\frac{1}{\alpha})\cos\varphi\sin\frac{\pi-\varphi}{1+\frac{1}{\alpha}}+\sin\varphi\cos\frac{\pi-\varphi}{1+\frac{1}{\alpha}}}{(1+\frac{1}{\alpha})^2\sin^2\frac{\pi-\varphi}{1+\frac{1}{\alpha}}}\\
	&=\frac{\alpha(1+\alpha)(\sin Y\sin Z-\cos Y\cos Z)\sin Z+\alpha^2(\sin Y\cos Z+\cos Y\sin Z)\cos Z}{(1+\alpha)^2\sin^2Z}\\
	&=\frac{\alpha^2\sin Y\cos^2Z+\alpha(1+\alpha)\sin Y\sin^2Z-\alpha\cos Y\sin Z\cos Z}{(1+\alpha)^2\sin^2Z}.
\end{align*}
This implies that
\begin{align}\label{cal-100-right}
	(1+\alpha)\Theta_{\frac{1}{\alpha}}'(\varphi)\sin\frac{\pi-\varphi}{1+\alpha}=\frac{\alpha^2\sin^2Y\cos^2Z+\alpha(1+\alpha)\sin^2Y\sin^2Z-\alpha\sin Y\cos Y\sin Z\cos Z}{(1+\alpha)\sin^2Z}.
\end{align}

Finally, comparing \eqref{cal-100-left} with \eqref{cal-100-right} yields the desired identity \eqref{simpli-iden}:
\[
(1+\alpha)\big[\Theta_{\frac{1}{\alpha}}(\varphi)\big]^{2}-(1+2\alpha)\Theta_{\frac{1}{\alpha}}(\varphi)\cos\frac{\pi-\varphi}{1+\alpha}+\alpha=(1+\alpha)\Theta_{\frac{1}{\alpha}}'(\varphi)\sin\frac{\pi-\varphi}{1+\alpha}.
\]

\subsection{Proof of identity \eqref{simpli-iden2}}\label{prf-simpli-iden2}
Using the notation \eqref{def-YZ}, by \eqref{cal-100-Tha-YZ}, the left hand side of \eqref{simpli-iden2} is
\begin{align}\label{cal-200-left}
	\begin{split}
		\frac{1+\alpha}{\sin\varphi}\Big[\Theta_{\frac{1}{\alpha}}(\varphi)-\cos\frac{\pi-\varphi}{1+\alpha}\Big]&=\frac{1+\alpha}{\sin\varphi}\Big[\frac{\alpha\sin Y\cos Z+\alpha\cos Y\sin Z}{(1+\alpha)\sin Z}-\cos Y\Big]\\
		&=\frac{\alpha\sin Y\cos Z-\cos Y\sin Z}{\sin\varphi\sin Z}.
	\end{split}
\end{align}
Recall the definition of $d(\varphi)$ in \eqref{def-d}:
\begin{align}\label{cal-300-d-YZ}
	d(\varphi)=\frac{(1+\alpha)\cos\varphi\sin\frac{\pi-\varphi}{1+\frac{1}{\alpha}}+\alpha\sin\varphi\cos\frac{\pi-\varphi}{1+\frac{1}{\alpha}}}{\sin\varphi\sin\frac{\pi-\varphi}{1+\frac{1}{\alpha}}}=\frac{(1+\alpha)\cos\varphi\sin Z+\alpha\sin\varphi\cos Z}{\sin\varphi\sin Z}.
\end{align}
Hence the right hand side of \eqref{simpli-iden2} is
\begin{align*}
		d(\varphi)\cos\frac{\pi-\varphi}{1+\frac{1}{\alpha}}-\sin\frac{\pi-\varphi}{1+\frac{1}{\alpha}}&=\frac{(1+\alpha)\cos\varphi\sin Z\cos Z+\alpha\sin\varphi\cos^2Z-\sin\varphi\sin^2Z}{\sin\varphi\sin Z}\\
		&=\frac{\alpha(\sin\varphi\cos Z+\cos\varphi\sin Z)\cos Z-(\sin\varphi\sin Z-\cos\varphi\cos Z)\sin Z}{\sin\varphi\sin Z}
\end{align*}
Note that
\[
\sin\varphi=\sin(\pi-\varphi)=\sin(Y+Z)\quad\text{and}\quad\cos\varphi=-\cos(\pi-\varphi)=-\cos(Y+Z).
\]
It follows that
\begin{align}\label{cal-300-simpli}
	\sin\varphi\cos Z+\cos\varphi\sin Z=\sin(Y+Z)\cos Z-\cos(Y+Z)\sin Z=\sin Y
\end{align}
and
\[
\sin\varphi\sin Z-\cos\varphi\cos Z=\sin(Y+Z)\sin Z+\cos(Y+Z)\cos Z=\cos Y.
\]
Thus we conclude that
\begin{align}\label{cal-200-right}
	d(\varphi)\cos\frac{\pi-\varphi}{1+\frac{1}{\alpha}}-\sin\frac{\pi-\varphi}{1+\frac{1}{\alpha}}=\frac{\alpha\sin Y\cos Z-\cos Y\sin Z}{\sin\varphi\sin Z}.
\end{align}
Therefore, comparing \eqref{cal-200-left} and \eqref{cal-200-right} yields the desired identity \eqref{simpli-iden2}:
\[
\frac{1+\alpha}{\sin\varphi}\Big[\Theta_{\frac{1}{\alpha}}(\varphi)-\cos\frac{\pi-\varphi}{1+\alpha}\Big]=d(\varphi)\cos\frac{\pi-\varphi}{1+\frac{1}{\alpha}}-\sin\frac{\pi-\varphi}{1+\frac{1}{\alpha}}.
\]

\subsection{Proof of identity \eqref{simpli-iden3}}\label{prf-simpli-iden3}
Using the notation \eqref{def-YZ}, by the expression \eqref{cal-300-d-YZ} of $d(\varphi)$, the right hand side of \eqref{simpli-iden3} is
\begin{align*}
	d(\varphi)\sin\frac{\pi-\varphi}{1+\frac{1}{\alpha}}+\cos\frac{\pi-\varphi}{1+\frac{1}{\alpha}}&=\frac{(1+\alpha)\cos\varphi\sin Z+\alpha\sin\varphi\cos Z}{\sin\varphi}+\cos Z\\
	&=\frac{(1+\alpha)(\sin\varphi\cos Z+\cos\varphi\sin Z)}{\sin\varphi}.
\end{align*}
Then, by \eqref{cal-300-simpli}, we have
\[
d(\varphi)\sin\frac{\pi-\varphi}{1+\frac{1}{\alpha}}+\cos\frac{\pi-\varphi}{1+\frac{1}{\alpha}}=\frac{(1+\alpha)\sin Y}{\sin\varphi}=\frac{(1+\alpha)\sin\frac{\pi-\varphi}{1+\alpha}}{\sin\varphi},
\]
which is exactly the left hand side of \eqref{simpli-iden3}. Hence we obtain the desired identity \eqref{simpli-iden3}.

\end{document}